\newtheorem{Thm}{Theorem}[section]
\newtheorem{Prop}[Thm]{Proposition}
\newtheorem{Lemma}[Thm]{Lemma}
\newtheorem{Res}[Thm]{Result}
\newtheorem{Cor}[Thm]{Corollary}
\theoremstyle{definition}
\newtheorem{Rmk}[Thm]{Remark}
\newtheorem{Problem}[Thm]{Problem}
\numberwithin{equation}{section}
\def\Ker{\operatorname{Ker}}
\def\End{\operatorname{End}}
\def\Obj{\operatorname{Obj}}
\def\Arr{\operatorname{Arr}}
\def\sbs{\operatorname{sb}}
\def\sbbr{\operatorname{sbbr}}
\def\irl{\mathcal{L}}
\def\irll{\medspace\mathcal{L}\medspace}
\def\irr{\mathcal{R}}
\def\irrr{\medspace\mathcal{R}\medspace}
\def\irc{{\mathcal{C}}}
\def\irx{{\mathcal{X}}}
\def\irk{{\mathcal{K}}}
\def\iru{{\mathcal{U}}}
\def\irv{{\mathcal{V}}}
\def\irli{{\mathcal{LI}}}
\def\ircs{{\mathcal{CS}}}
\def\ires{{\mathcal{ES}}}
\def\sero#1#2{#1_0,#1_1,\ldots,#1_#2}
\def\word#1#2{#1_1#1_2\cdots#1_#2}
\def\id{\mathrel{\hat=}}
\def\cw{\curlywedge}
\def\lamsd#1#2{#1*_\lambda #2}
\def\hc#1#2{\irc(#1,#2)}
\def\hx#1#2{\irx(#1,#2)}
\def\al#1{\alpha(#1)}
\def\om#1{\omega(#1)}
\def\von#1{\overline{#1}}
\def\veps{\varepsilon}
\def\thetat{\widetilde{\theta}}
\def\Thetat{\widetilde{\Theta}}
\def\Xit{\widetilde{\Xi}}
\newcommand{\Iota}{\mathrm{I}}
\newcommand{\Tau}{\mathrm{T}}
\def\Av{\von A}
\def\At{\widetilde{A}}
\def\Xv{\von X}
\def\Xt{\widetilde{X}}
\def\Wt{\widetilde{W}}
\def\free2#1{{F_{\langle 2,2\rangle}\!}\left(#1\right)}
\def\Sro{(S,\rho)}
\def\act#1#2{{^{#1}\kern -2pt {#2}}}
\def\acth#1#2{{^{#1}\kern -1pt {#2}}}
\def\bfr#1#2{{\mathrm{BF}}#1(#2)}
\def\ss#1{\mathbf{s}\left(#1\right)}
\def\redto{\rightsquigarrow}
\newcommand{\balnyito}{\lfloor\kern -4pt\lfloor}
\newcommand{\jobbzaro}{\rceil\kern -4pt\rceil}
\newcommand{\bal}[1]{\balnyito#1\rfloor}
\newcommand{\jobb}[1]{\lceil#1\jobbzaro}
\newcommand{\Wr}[1]{W_{#1}^{\mathrm{right}}}
\newcommand{\Wl}[1]{W_{#1}^{\mathrm{left}}}
\newcommand{\Wre}{W^{\emptyset|\mathrm{right}}}
\newcommand{\Wle}{W^{\mathrm{left}|\emptyset}}
\newcommand{\Web}{W^{\emptyset|}}
\newcommand{\Wej}{W^{|\emptyset}}
\newcommand{\whwp}{\widehat{\wp}}
\newcommand{\ulp}{\breve{p}}
\newcommand{\ulB}{\breve{B}}
\newcommand{\ulC}{\breve{C}}
\newcommand{\ulw}{\breve{w}}
\begin{document}

\title[$E$-solid locally inverse semigroups]
{$E$-solid locally inverse semigroups} 				
\author{Tam\'as D\'ek\'any}
\address{Bolyai Institute, University of Szeged, Aradi v\'ertan\'uk tere 1, H-6720 Szeged, Hungary}
\email{tdekany@math.u-szeged.hu}
\author{M\'aria B.\ Szendrei}
\address{Bolyai Institute, University of Szeged, Aradi v\'ertan\'uk tere 1, H-6720 Szeged, Hungary}
\email{m.szendrei@math.u-szeged.hu}
\author{Istv\'an Szittyai}
\email{istvan.szittyai@gmail.com}

\date{2018.07.03}

\thanks{Research partially supported by the Ministry of 
Culture and Education (Hungary) grant no.\ FKFP 1030/1997,
and by the Hungarian National Foundation for Scientific 
Research grants no.~T37877, T48809, K60148 and K115518.}

\begin{abstract}
We prove that if $S$ is an $E$-solid locally inverse semigroup, and $\rho$ is an inverse semigroup congruence on $S$ such that the 
idempotent classes of $\rho$ are completely simple semigroups then $S$ is embeddable into a $\lambda$-semidirect product of a completely simple semigroup by $S/\rho$.
Consequently, the $E$-solid locally inverse semigroups turn out to be, up to isomorphism, the regular subsemigroups of
$\lambda$-semidirect products of completely simple semigroups by inverse semigroups.  
\end{abstract}

\maketitle

\section{Introduction}

Locally inverse semigroups form a large and important class of regular semigroups which contains several well-studied subclasses
--- above all the class of inverse semigroups and that of completely simple semigroups.  Locally inverse semigroups were introduced by Nambooripad in
\cite{12}  (under the name pseudo-inverse semigroups). The research into the structure of locally inverse semigroups was particularly
active in the late 1970's and  early 1980's, 
and several nice and deep results were established by 
McAlister, Nambooripad and Pastijn. 
For an exhaustive list of references, see \cite{4}.

The class of {$E$-solid semigroups} appeared even earlier in the structure theory of regular semigroups, see Hall~\cite{9B}. 
This wide class also contains the above mentioned prominent classes. 
Moreover, it is a common generalization of ortodox semigroups and completely regular semigroups. However, the study of the structure of {$E$-solid semigroups} 
has not been as intensive and successful as that of
locally inverse semigroups.

The study of classes of regular semigroups from universal 
algebraic point of view began in the late 1980's. 
It has turned out that these two classes are precisely those in which a theory showing close analogy to that for
usual varieties  of algebras can be developed, see 
Auinger~\cite{A}, \cite{2}, Hall~\cite{9A}, Yeh~\cite{16}, Ka\v dourek and the second author~\cite{KSz0}, \cite{KSz}, \cite{SzM}. 
For surveys, see Auinger~\cite{1}, Jones~\cite{10} and Trotter~\cite{T}. 
This progress revitalized the structure theoretical investigations in these classes, see e.g.\ Billhardt and the second author~\cite{7}.

It is proved by Billhardt and the 
third author~\cite{8} that if $S$ is an inverse semigroup and 
$\rho$ is an idempotent separating congruence on $S$ then 
$S$ is embeddable in a $\lambda$-semidirect
product of a group $K$ by $S/\rho$. 
In this paper we generalize this result for $E$-solid locally inverse semigroups.
Our main result is that if $S$ is an $E$-solid locally inverse semigroup and $\rho$ is an inverse semigroup congruence 
on $S$ such that the idempotent $\rho$-classes, as 
subsemigroups of $S$, are completely simple then
$S$ is  embeddable in a $\lambda$-semidirect product of a completely simple semigroup by $S/\rho$.
Since, by Yamada (and Hall)~\cite{15}, a regular semigroup is $E$-solid if and only if the idempotent classes of its
least inverse semigroup congruence are completely simple subsemigroups, we obtain that
the $E$-solid locally inverse semigroups are, up to isomorphism, the regular subsemigroups of
the $\lambda$-semidirect products of completely simple semigroups by inverse semigroups.
In the proof of the main result we apply the `canonical embedding technique' developed 
by Ku{\v r}il and the second author~\cite{11}
for handling embeddability of extensions by inverse semigroups in $\lambda$-semidirect products.

\section{Preliminaries}

In this section we recall the notions and summarize the results needed in the paper. For the undefined notions
and notation the reader is referred to \cite{H} and \cite{G}.

If $S$ is a regular semigroup then an {\it inverse unary operation\/} is defined to be a mapping 
$^\dagger\colon S\to S$
with the property that $s^\dagger\in V(s)$ for every $s\in S$. In particular, if $S$ is an inverse semigroup then the
unique inverse unary operation is denoted in the usual way by $^{-1}$.

Let $S$ be a semigroup, and $\irk$ a class of semigroups. 
If $\rho$ is an inverse semigroup congruence on $S$ (i.e., $S/\rho$ is an inverse semigroup) then $\rho$ is 
said to be a {\em congruence over} $\irk$ if each idempotent $\rho$-class, as a subsemigroup of $S$, belongs to $\irk$. 
In this case, the union of the idempotent $\rho$-classes, called the {\em kernel of} $\rho$ and denoted $\Ker\rho$, 
is a semilattice of semigroups in $\irk$.
Recall that if $S$ is a regular semigroup then, by Lallement's lemma, the idempotent $\rho$-classes are precisely 
the $\rho$-classes $e\rho$ where $e\in E_S$.

Let $K$ be a semigroup and $T$ an inverse semigroup. 
If $S$ is a semigroup and $\rho$ is a congruence on $S$ 
such that $S/\rho$ is isomorphic to $T$ and $\Ker\rho$ is 
isomorphic to $K$ then the pair $\Sro$ is called
an {\em extension of $K$ by $T$}. 
If, moreover, $S$ is regular then $\Sro$ is termed a {\it regular extension of $K$ by $T$\/}. 
In this case, $K$ --- being isomorphic to the kernel of an inverse semigroup congruence --- is necessarily regular. 
If $\Sro$ and $(T,\sigma)$ are extensions by inverse semigroups then an injective homomorphism
\hbox{$\psi\colon S\to T$} is defined to be an 
{\em embedding of the extension $\Sro$ into the extension $(T,\sigma)$} if the congruence induced by
$\psi\sigma^\natural$ is just $\rho$.

If $S$ is a semigroup and $K$ is a subsemigroup in $S$ then we distinguish Green's $\irr$ relation on $K$ from that on $S$ by writing
$\irr^K$. Recall that if $K$ is regular then $\irr^K=\irr\cap(K\times K)$. Moreover, if $K$ is a full regular subsemigroup in 
the regular semigroup $S$ then the rule $R\mapsto R^K=R\cap K$
determines a bijection from the set of $\irr$-classes of $S$ onto the set of $\irr$-classes of $K$.
In particular, for any $x\in K$, we have $(R_x)^K=(R^K)_x$, therefore it is
not confusing to write simply $R_x^K$.

A regular semigroup is called {\it locally inverse\/} if each local submonoid 
$eSe\ (e\in E_S)$ is an inverse semigroup. Note that this
concept is used in the literature also if $S$ is not necessarily regular. 
However, throughout this paper, we consider only
regular locally inverse semigroups (and regular $E$-solid semigroups), so we omit the attribute `regular', for short. 
It is well known that a regular semigroup $S$ is locally 
inverse if and only if the natural partial order $\le$ is 
compatible with the multiplication.

Another characteristic property of locally inverse semigroups, within the class of all regular semigroups, is that all the sandwich sets are singletons.  
This allows us to introduce another binary operation $\wedge$ on $S$, assigning to any
pair of elements $(s,t)\in S\times S$ the unique element $s\wedge t$ of the sandwich set $S(t^*t,ss^*)$, where
$s^*$ and $t^*$ are arbitrary inverses of $s$ and $t$, respectively.
We call $\wedge$ the {\em sandwich operation} on $S$. 
It is clear by definition that $s\wedge t\in E_S$ and 
$s\wedge t=ss^*\wedge t^*t$ for every $s,t\in S$ and for any 
$s^*\in V(s)$ and $t^*\in V(t)$. 
In particular, in an inverse semigroup, $s\wedge t
=ss^{-1}t^{-1}t$ and, in a completely simple semigroup, 
$s\wedge t$ is the unique idempotent which is 
$\irr$-related to $s$ and $\irl$-related to $t$, 
that is, $s\wedge t$ is the identity in the group $H_{st}$.
Let us also mention that every homomorphism and congruence of locally inverse semigroups 
respects the sandwich operation, see \cite{16}. 

The following important property of locally inverse semigroups will be also
needed later, see \cite[Proposition IX.3.2(4)]{G}.

\begin{Res}\label{li-basic}
Let $S$ be a locally inverse semigroup, and let $s,t\in S$ with
$s\le t$. Then, for every $b\in R_t$, there exists 
a unique $a\in R_s$ such that $a\le b$.
\end{Res}

A semigroup is called {\em $E$-solid} if the core of $S$, that is, the subsemigroup generated by the idempotents of $S$ is completely regular. 
Clearly, orthodox semigroups and completely regular semigroups are $E$-solid. 
It is also known, that a regular semigroup is $E$-solid if and only if the least inverse 
semigroup congruence is over the class of all completely simple semigroups, 
see Yamada (and Hall)~\cite{15}. 
Therefore every $E$-solid locally inverse semigroup can be 
viewed as an extension of a locally inverse completely 
regular semigroup (that is, of a strong semilattice of completely simple semigroups) by an inverse semigroup. 
For several equivalent characterizations of the class of strong semilattices of completely simple semigroups, see \cite{P}.

By a {\em binary semigroup} we mean a semigroup having
an additional binary operation denoted by $\wedge$. 
A homomorphism or a congruence of a binary semigroup
is always supposed to respect both the multiplication and the $\wedge$ operation. 
As noticed above, each locally inverse semigroup is also a binary semigroup with respect to the 
sandwich operation, and the homomorphisms and congruences of locally inverse semigroups, 
considered as usual semigroups and binary semigroups, respectively, coincide.

Now we recall the notion of a $\lambda$-semidirect product by an inverse semigroup introduced by Billhardt~\cite{6}
and formulate its basic properties.

Let $K$ be an arbitrary semigroup and $T$ an inverse 
semigroup. Denote by $\End (K)$ the endomorphism monoid of $K$. We say that
$T$ acts on $K$ by endomorphisms on the left if an antihomomorphism $\veps\colon T\to\End (K),\
t\mapsto\veps_t$ is given, that is, $\veps_u\veps_t=\veps_{tu}\ (t,u\in T)$ holds for the mapping $\veps$.
For brevity, we say only that $T$ {\em acts on} $K$, and we denote $a\veps_t$ by $\act ta\ (a\in K,\ t\in T)$. 
Define a multiplication on the underlying set $\{(a,t)\in K\times T:\act{tt^{-1}}a=a\}$ by
$$(a,t)(b,u)=(\act{(tu)(tu)^{-1}}a\cdot \acth tb,tu) \quad (a,b\in K,\ t,u\in T).$$
The semigroup obtained in this way is called a 
{\em $\lambda$-semidirect product of $K$ by $T$} and is denoted by $\lamsd KT$. For several reasons, the
$\lambda$-semidirect product construction can be considered as a natural generalization of a semidirect product
by a group, see \cite{13}.

By specializing the respective statements in \cite{5} and \cite{11}, we formulate the properties of this
construction in the case where $K$ is a completely simple semigroup.

\begin{Res}\label{lsdtul}
Let $K$ be a completely simple semigroup and $T$ an 
inverse semigroup acting on $K$. 
The $\lambda$-semidirect product $\lamsd KT$ is an 
$E$-solid locally inverse semigroup with set of
idempotents
$$E_{\lamsd KT}=\{(e,i):e\in E_K,\ i\in E_T
\text{ and }\act ie=e\}.$$
For any $(a,t)\in\lamsd KT$, we have
$$V_{\lamsd KT}\big((a,t)\big)=\{\big(b,t^{-1}\big):
b\in V_K(\act{t^{-1}}a) \text{ and }
\acth{t^{-1}t}b=b\}.$$
The second projection
$\pi_2\colon \lamsd KT\to T,\ (a,t)\mapsto t$
is a homomorphism of $\lamsd KT$ onto $T$, and
the congruence $\vartheta_2$ induced by $\pi_2$ is over completely simple semigroups.
The kernel of $\vartheta_2$ is
$$\Ker{\vartheta_2}=\{(a,e):a\in K,\ e\in E_T 
\text{ and }\act ea=a\},$$
and it is isomorphic to the strong semilattice of the 
completely simple subsemigroups  
$K_e=\{a\in K:\act ea=a\}\  (e\in E_T)$
of $K$ with the surjective structure homomorphisms
$\veps_f|_{K_e}\colon K_e\mapsto K_f\ (e,f\in E_T,\ e\ge f)$.
\end{Res}

The second half of the previous proposition says that 
the extension $(\lamsd KT,\vartheta_2)$ is an extension of a strong semilattice of completely simple 
subsemigroups of $K$ by $T$. 
The extension $(\lamsd KT,\vartheta_2)$ is referred to as a {\em $\lambda$-semidirect product extension of $K$ by $T$}.

In this paper, we denote by $\irli$, $\ires$ and $\ircs$
the classes of all locally inverse semigroups, 
$E$-solid semigroups and completely simple semigroups, respectively.
A class of regular semigroups is termed an {\em existence variety}, or, for short, an {\em e-variety} if it is closed 
under taking direct products, homomorphic images and 
regular subsemigroups. For example, $\irli$, $\ires$ and 
$\ircs$ form e-varieties. 
Note also that a class of inverse semigroups or a class of completely 
simple semigroups constitutes an e-variety if and only if
it forms a variety in the usual sense. 

Let $X$ be a non-empty set. The free semigroup on $X$ is denoted by $X^+$.
We `double' $X$ as follows. Consider a set 
$X'=\{x':x\in X\}$ disjoint from $X$ together with a 
bijection $'\colon X\to X',\ x\mapsto x'$, and denote 
$X\cup X'$ by $\Xv$.

Let $S$ be a regular semigroup. A mapping $\nu\colon\Xv\to S$ is called {\em matched} if $x'\nu$
is an inverse of $x\nu$ in $S$ for each $x\in X$. 
Now let $\irk$ be a class of regular semigroups. 
We say that a semigroup $B\in\irk$ together with a matched mapping $\xi\colon\Xv\to B$ is a 
{\em bifree object in $\irk$ on $X$} if, for
any $S\in\irk$ and any matched mapping $\nu\colon\Xv\to S$, there is a unique homomorphism $\phi\colon B\to
S$ extending $\nu$, that is, for which $\xi\phi=\nu$
holds. We denote the unique homomorphism extending $\nu$
by $\von\nu$. 
It was essentially proved by Yeh~\cite{16} that
an e-variety
admits a bifree object on any alphabet 
(or, equivalently, on an alphabet of at least two elements)
if and only if it is contained either in $\irli$ or in $\ires$.
The bifree objects of $\irli$ and $\ires$ are determined
by Auinger~\cite{A}, \cite{2} and by the second 
author~\cite{SzM}, respectively. 
Each of these descriptions
fit into a Birkhoff-type theory for the respective class
based on an appropriate notion of `identity', see also
\cite{KSz}. 
In this paper we need the model for the bifree objects
of the e-varieties in $\irli$ published in \cite{2}.
For a more complete introduction to the theory of 
e-varieties, see \cite{1}, \cite{10} and \cite{T}. 
Based on \cite{2}, now we summarize the concepts and results needed in the paper.

The {\em free binary semigroup $\free2 Y$ on the alphabet $Y$} can be interpreted as follows. 
Its underlying set is the smallest one among the sets $W$ which fulfil
the following conditions:
\begin{enumerate}
\item[(i)] $Y\subseteq W\subseteq \left(Y\cup\{(,\;\wedge,\;)\}\right)^+$,
\item[(ii)] $u,v\in W$ implies $uv\in W$,
\item[(iii)] $u,v\in W$ implies $(u\wedge v)\in W$.
\end{enumerate}
The operations $\cdot$ and $\wedge$ are the concatenation 
and the operation
$$\free2 Y\times\free2 Y\to \free2 Y,\ 
(u,v)\mapsto (u\wedge v)\;,$$
respectively.

One can see that the indecomposable (into a product) terms in $\free2 Y$
are precisely the elements of $Y$ called {\em letters} and the terms of the form $(u\wedge v)$.
Moreover, each term admits a unique factorization into indecomposable ones.

A {\em bi-identity} in $\irli$ is a formal equality 
$u\id v$ among terms $u,v\in\free2\Xv$. 
We say that a semigroup {\em $S\in\irli$ satisfies the bi-identity $u\id v$} if 
$u\von\nu=v\von\nu$ for each matched mapping $\nu\colon\Xv\to S$. 
The {\em bi-identity $u\id v$ holds 
in the class $\irk$ of locally inverse semigroups} if it holds in every member of $\irk$. For an e-variety $\irv$ 
of locally inverse semigroups, define
\begin{eqnarray*}
\lefteqn{\Theta(\irv,X)=\{(u,v)\in\free2\Xv\times\free2\Xv:{}}\\
&& \qquad\qquad\qquad\qquad\text{the bi-identity }u\id v\text{ holds in }\irv\}.
\end{eqnarray*}
This relation is obviously a congruence on $\free2\Xv$
which is called the {\em bi-invariant congruence on
$\free2\Xv$ corresponding to $\irv$}.
The main results of the Birkhoff-type theory for $\irli$ are the following.

\begin{Res}
A class of locally inverse semigroups forms an e-variety if and only if it is defined by a set of bi-identities.
\end{Res}

\begin{Res}\label{bifree}
%\red{\tt *---*}
Given an e-variety $\irv$ of locally inverse semigroups and a non-empty set $X$, the factor semigroup 
$\bfr \irv X=\free2\Xv/\Theta(\irv,X)$ 
together with the matched mapping 
$\xi\colon\Xv\to\bfr \irv X,\ y\mapsto y\Theta(\irv,X)$
is a bifree object in $\irv$ on $X$.
\end{Res}

Note that if $\irv$ is non-trivial, i.e., contains a member of at least to elements, then
$\xi$ is injective.
Thus the element $y\Theta(\irv,X)\ (y\in\Xv)$ is usually identified with $y$, and so 
$\bfr \irv X$ is considered to contain $\Xv$.

In the sequel we need the description, published in \cite{2}, of the 
bi-invariant congruences corresponding to the variety 
$\ircs$.

For any term $w\in\free2\Xv$, denote by $\iota w$ [$w\tau$]
the first [last] letter (i.e., element of $\Xv$) appearing in $w$ 
(reading $w$ from the left to the right as a word in the alphabet
$\Xv\cup\{(,\wedge,)\}$). 
In the usual way, extend ${}'\colon X\to X'$ to a mapping $'\colon\Xv\to \Xv$ by defining
$(x')'$ to be $x$ for any $x\in X$.

Let us consider the following {\em reductions} of the terms in $\free2\Xv$ where
$u,v\in\free2\Xv$ and $x,y,z\in\Xv$:
\begin{enumerate}
\item[(R0)] $(u\wedge v)\redto(\iota u\wedge v\tau)$,
\item[(R1)] $x(y\wedge x)\redto x$,
\item[(R2)] $(x\wedge y)x\redto x$,
\item[(R3)] $(x\wedge y)(x\wedge z)\redto (x\wedge z)$,
\item[(R4)] $(z\wedge x)(y\wedge x)\redto (z\wedge x)$,
\item[(R5)] $x'x\redto (x'\wedge x)$.
\end{enumerate}
A {\em reduction $s\redto t$ is applied} in a term $w\in\free2\Xv$ if a segment $s$ 
in $w$ is changed for $t$. A term in $\free2\Xv$ is called {\em reduced} if no reduction can be applied. 
Since reductions (R0)--(R5) shorten the terms in $\free2\Xv$, one sees that a reduced form 
can be obtained for any term by applying finitely many reductions.
In \cite{2}, each term $w\in\free2\Xv$ is proved to have a uniquely 
determined reduced form which is denoted by $\ss w$, and the following result is obtained. 

{\begin{Res}\label{Thetacs}
For any non-empty set $X$, we have
$$\Theta(\ircs,X)=\{(u,v)\in\free2\Xv\times\free2\Xv:\ss u=\ss v\}.$$
\end{Res}

Notice that, applying reduction (R0) for any indecomposable factor of a term in $\free2\Xv$
which is not a letter, we obtain an element of the free semigroup $\Xt^+$ on the alphabet 
$\Xt=\Xv\cup(\Xv\wedge\Xv)$ where $(\Xv\wedge\Xv)$ stands for the set
$\{(x\wedge y):x,y\in\Xv\}$.
In particular, every reduced term belongs to $\Xt^+$.
Thus the model of a bifree object in $\ircs$ on $X$ provided by Results \ref{bifree} and \ref{Thetacs}
can be simplified as follows.
Make the free semigroup $\Xt^+$ to a binary semigroup by defining an additional binary operation
$\wedge$ on it by
\begin{equation}\label{cswedge}
(u\wedge v) = (\iota u\wedge v\tau)
\end{equation}
for every $u,v\in\Xt^+$,
and consider the restriction of $\Theta(\ircs,X)$ to $\Xt^+$.

\begin{Prop}\label{Thetacsmod}
For any non-empty set $X$, the relation
$$\Thetat(\ircs,X)=\{(u,v)\in\Xt^+\times\Xt^+:\ss u=\ss v\}$$
is a congruence on the binary semigroup $\Xt^+$, and the factor semigroup
$\Xt^+/\Thetat(\ircs,X)$ 
together with the matched mapping 
$\xi\colon\Xv\to \Xt^+/\Thetat(\ircs,X)$, $y\mapsto y\Thetat(\ircs,X)$
is a bifree object in $\ircs$ on $X$.
\end{Prop}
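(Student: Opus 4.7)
The plan is to exhibit $\Xt^+$, equipped with concatenation and the operation defined by (\ref{cswedge}), as a retract of $\free2\Xv$ modulo $\Theta(\ircs,X)$, and then transfer the bifree property from Results \ref{bifree} and \ref{Thetacs}. Notice first that $\Xt^+ \subseteq \free2\Xv$: an element of $\Xt^+$ is a concatenation of letters from $\Xv$ and of terms of the form $(x\wedge y)$ with $x,y\in\Xv$, and all of these lie in $\free2\Xv$; by unique factorization into indecomposables in $\free2\Xv$, the inclusion is well-defined and injective. Consider the composition
$$\vfi\colon \Xt^+ \hookrightarrow \free2\Xv \twoheadrightarrow \free2\Xv/\Theta(\ircs,X)=\bfr\ircs X,\qquad w\mapsto w\Theta(\ircs,X).$$

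The first step is to verify that $\vfi$ is a homomorphism of binary semigroups, where on $\Xt^+$ the $\wedge$ operation is defined by (\ref{cswedge}). Compatibility with concatenation is immediate. For the $\wedge$ operation, given $u,v\in\Xt^+$ one computes
$$\vfi(u)\wedge\vfi(v) = (u\wedge v)\Theta(\ircs,X),\qquad \vfi\bigl((\iota u\wedge v\tau)\bigr)=(\iota u\wedge v\tau)\Theta(\ircs,X).$$
The reduction (R0) yields $(u\wedge v)\redto(\iota u\wedge v\tau)$, and since $(\iota u\wedge v\tau)$ admits no further reduction (both slots are single letters), it coincides with $\ss{(u\wedge v)}$; hence the two $\Theta(\ircs,X)$-classes agree by Result \ref{Thetacs}.

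Next I would establish surjectivity and compute the kernel of $\vfi$. Surjectivity follows because every $w\in\free2\Xv$ admits a reduced form $\ss w$ lying in $\Xt^+$ (applying (R0) to every indecomposable factor that is not a letter), so $w\Theta(\ircs,X)=\ss w\,\Theta(\ircs,X)=\vfi(\ss w)$. For the kernel, Result \ref{Thetacs} gives $\vfi(u)=\vfi(v)$ iff $\ss u=\ss v$, which is precisely $\Thetat(\ircs,X)$. Therefore $\Thetat(\ircs,X)$ is a congruence on the binary semigroup $\Xt^+$ and $\vfi$ induces an isomorphism of binary semigroups
$$\Xt^+/\Thetat(\ircs,X)\ \cong\ \bfr\ircs X.$$

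Finally, the matched mapping $\xi\colon\Xv\to\Xt^+/\Thetat(\ircs,X)$, $y\mapsto y\Thetat(\ircs,X)$, is carried by this isomorphism to the matched mapping $\Xv\to\bfr\ircs X$, $y\mapsto y\Theta(\ircs,X)$, of Result \ref{bifree}. Since bifreeness is preserved under isomorphism of pointed binary semigroups, the pair $(\Xt^+/\Thetat(\ircs,X),\xi)$ is a bifree object in $\ircs$ on $X$. The only genuinely non-trivial step is checking compatibility of $\vfi$ with $\wedge$; this is where one must observe that (R0) exactly matches the simplification built into (\ref{cswedge}) and that $(\iota u\wedge v\tau)$ is already reduced, so everything boils down to Result \ref{Thetacs} rather than to any new computation with the reductions (R1)--(R5).
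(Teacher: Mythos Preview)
Your proof is correct and follows essentially the same line as the paper, which states the proposition without a separate proof and relies on the preceding discussion: every reduced term lies in $\Xt^+$, and the restriction of $\Theta(\ircs,X)$ to $\Xt^+$ gives the desired congruence, with the $\wedge$ operation on $\Xt^+$ defined precisely so that (R0) makes the inclusion into $\bfr\ircs X$ a binary homomorphism. Your write-up simply makes this explicit by packaging it as a surjective binary homomorphism $\vfi$ with kernel $\Thetat(\ircs,X)$.
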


Later on, we use the latter model for the bifree object of $\ircs$ on $X$, and
need an alternative description of $\Thetat(\ircs,X)$.
In order to distinguish the elements of the two types in the alphabet $\Xt$, we call the elements of $\Xv$, 
as usual, {\em letters}, and those of $(\Xv\wedge\Xv)$ {\em $\wedge$-letters}. 

\begin{Lemma}\label{Thetatcs}
The congruence $\Thetat(\ircs,X)$ is generated on $\Xt^+$, as a semigroup congruence, by the relation
$\mathrm{I}\cup \Upsilon$ where
$$\mathrm{I}=\{(xx'x, x): x\in \Xv\},$$
and $\Upsilon$ is the union of the following three relations coming from rules {\rm (R3)}--{\rm (R5)}:
\begin{eqnarray*}
\Upsilon_3 &\!\!\!=\!\!\!& \{\big((x\wedge y)(x\wedge z),(x\wedge z)\big): x,y,z\in\Xv\},\cr
\Upsilon_4 &\!\!\!=\!\!\!& \{\big((z\wedge x)(y\wedge x),(z\wedge x)\big): x,y,z\in\Xv\},\cr
\Upsilon_5 &\!\!\!=\!\!\!& \{\big(x'x,(x'\wedge x)\big): x\in\Xv\}.
\end{eqnarray*}
\end{Lemma}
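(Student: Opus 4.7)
The plan is to establish both inclusions between $\Thetat(\ircs,X)$ and the semigroup congruence $\sigma$ on $\Xt^+$ generated by $\mathrm{I}\cup\Upsilon$.

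For $\sigma\subseteq\Thetat(\ircs,X)$, I would verify that every generating pair has the same reduced form. Each pair in $\Upsilon_3$, $\Upsilon_4$, $\Upsilon_5$ is a single instance of the reductions (R3), (R4), (R5), so it immediately belongs to $\Thetat(\ircs,X)$. For a pair $(xx'x,x)$ from $\mathrm{I}$, one first applies (R5) to the internal segment $x'x$ to obtain $x(x'\wedge x)$, then (R1) with $y=x'$ to reach $x$; hence $\ss{xx'x}=\ss{x}$.

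For the reverse inclusion, the uniqueness of $\ss{\cdot}$ (applied via one-step reductions (R1)--(R5) on segments) implies that $\Thetat(\ircs,X)$ is the smallest semigroup congruence on $\Xt^+$ containing the pairs coming from (R1)--(R5). Consequently, it suffices to show that each of these five reductions is contained in $\sigma$. Rules (R3), (R4), (R5) are literally $\Upsilon_3$, $\Upsilon_4$, $\Upsilon_5$, so only (R1) and (R2) need work. For (R1), I would run the chain
$$x(y\wedge x)\ \sim\ xx'x(y\wedge x)\ \sim\ x(x'\wedge x)(y\wedge x)\ \sim\ x(x'\wedge x)\ \sim\ xx'x\ \sim\ x,$$
whose steps are justified, respectively, by $\mathrm{I}$ (replacing the prefix $x$ by $xx'x$), by $\Upsilon_5$ on the internal factor $x'x$, by $\Upsilon_4$ (absorbing $(y\wedge x)$ into $(x'\wedge x)$), by $\Upsilon_5$ read backwards, and finally by $\mathrm{I}$. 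For (R2) the symmetric chain
$$(x\wedge y)x\ \sim\ (x\wedge y)xx'x\ \sim\ (x\wedge y)(x\wedge x')x\ \sim\ (x\wedge x')x\ \sim\ xx'x\ \sim\ x$$
works: the second and fourth steps apply $\Upsilon_5$ in the form $xx'\sim(x\wedge x')$, which is a legitimate instance of $\Upsilon_5=\{(u'u,(u'\wedge u)):u\in\Xv\}$ obtained by taking $u=x'\in\Xv$ and using $(x')'=x$; and the third step is $\Upsilon_3$ with $z=x'$.

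The principal obstacle is precisely that (R1) and (R2) are \emph{not} listed among the generators and must be derived. The key observation that makes the derivations work is that $\Upsilon_5$ acts as a two-way bridge between a $\wedge$-letter of the special shape $(u'\wedge u)$ and the two-letter product $u'u$; once read in both directions this bridge lets one temporarily insert an $\mathrm{I}$-factor next to a $(y\wedge x)$- or $(x\wedge y)$-segment so that $\Upsilon_4$ or $\Upsilon_3$ can absorb the ``foreign'' letter $y$, after which the inserted factor is collapsed back via $\mathrm{I}$.
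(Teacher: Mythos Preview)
Your proof is correct and follows essentially the same approach as the paper's: the paper also reduces the reverse inclusion to showing that the pairs from (R1) and (R2) lie in the congruence generated by $\mathrm{I}\cup\Upsilon$, and uses the identical chain $x(y\wedge x)\,\chi\,xx'x(y\wedge x)\,\chi\,x(x'\wedge x)(y\wedge x)\,\chi\,x(x'\wedge x)\,\chi\,xx'x\,\chi\,x$ for (R1). Your explicit dual chain for (R2) is exactly what the paper means when it says the statement for those pairs ``is proven dually''.
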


\begin{proof}
Denote the semigroup congruence defined in the lemma by $\chi$.
It is obvious by Proposition \ref{Thetacsmod} that $\chi$ is contained in
$\Thetat(\ircs,X)$, and that, in order to show the reverse inclusion, it suffices to verify that the pairs 
$$\big(x(y\wedge x),x\big)\ \hbox{and}\ \big((x\wedge y)x,x\big)\quad (x,y\in\Xv),$$
coming from rules (R1)--(R2), belong to $\chi$.
Indeed, applying the relations $\mathrm{I},\,\Upsilon_5,\,\Upsilon_4,\,\Upsilon_5,\,\mathrm{I}$, we see that
$$x(y\wedge x)\ \chi\ xx'x(y\wedge x)\ \chi\ x(x'\wedge x)(y\wedge x)
\ \chi\ x(x'\wedge x)\ \chi\ xx'x\ \chi\ x$$
for every $x,y\in\Xv$.
The statement for the pairs of the other type is proven dually.
\end{proof}

Now we summarize the basic notions on graphs and semigroupoids needed in the paper.

A {\em graph} $\irx$ consists of a set of {\em objects} denoted by $\Obj\irx$ and, for every pair
$g,h\in\Obj\irx$, a set of {\em arrows} from $g$ to $h$ which is denoted by $\hx gh$. 
The sets of arrows corresponding to different pairs of
objects are supposed to be disjoint, and the set of all arrows is denoted by $\Arr\irx$. 
If $a\in\hx gh$ then 
we write that $\al a=g$ and $\om a=h$.
By a {\em loop} we mean an arrow $a$ with $\al a=\om a$. 
The arrows $a,b\in\Arr\irx$ are called {\em coterminal} 
if $\al a=\al b$ and $\om a=\om b$, and are termed 
{\em consecutive} if $\om a=\al b$.

A {\em semigroupoid} is a graph $\irx$ equipped with a composition which assigns to every pair of consecutive
arrows $a\in\hx gh,\ b\in\hx hi$ an arrow in $\hx gi$, usually denoted by $a\circ b$, such that the composition is associative, that
is, for any arrows $a\in\hx gh,\ b\in\hx hi$ and $c\in\hx ij$, we have $(a\circ b)\circ c=a\circ (b\circ c)$.

The notion of a semigroupoid generalizes that of a semigroup. Indeed, each semigroup can be considered as the set of arrows
of a semigroupoid whose set of objects is a singleton. A number of basic notions in semigroup theory can be extended in a natural way
for semigroupoids.

Let $\irx$ be a semigroupoid. We define {\em Green's relation $\irl$ on $\irx$} as follows: for any $a,b\in\Arr\irx$, we have
$a\,\irl\,b$ if and only if either $a=b$, or there exist $u,v\in\Arr\irx$ such that $\om u=\al a,\ \om v=\al b$ and $u\circ a=b,\ v\circ b=a$.
It is routine to check that $\irl$ is an equivalence relation on $\Arr\irx$, and 
clearly, for any $a,b\in\Arr\irx$ with $a\,\irl\,b$, we have $\om a=\om b$.
Furthermore, for any $c\in\Arr\irx$ with $\al c=\om a=\om b$, the relation $a\,\irl\,b$ implies
$a\circ c\,\irl\,b\circ c$.
Dually, we can introduce Green's relation $\irr$ on $\irx$ and formulate its basic properties.

By a {\em regular semigroupoid} we mean a semigroupoid $\irx$ in which, for every arrow $a\in\hx gh$, 
there exists an arrow $b\in\hx hg$ with $a\circ b\circ a=a$. 
If the arrows $a\in\hx gh$, $b\in\hx hg$ 
have the property that
$a\circ b\circ a=a$ and $b\circ a\circ b=b$ then we call 
$b$ an {\em inverse} of $a$, and denote the set of all inverses of $a$ by $V(a)$. 
Similarly to a regular semigroup, each arrow of $\irx$ has an inverse, and
each $\irr$- and $\irl$-class contains an idempotent arrow.
Each idempotent arrow is a loop, i.e., belongs to $\hx gg$ for some $g\in\Obj\irx$, and
$\hx gg$ is a regular semigroup for every $g\in\Obj\irx$.
Therefore the notion of the sandwich set $S(e,f)$ is defined for every $g\in\Obj\irx$ and $e,f\in E(\hx gg)$.
These sandwich sets are singletons if and only if $E(\hx gg)$ is a locally inverse semigroup for any $g\in\Obj\irx$.
If $\irx$ is a regular semigroupoid with this property then we call it a {\em locally inverse semigroupoid}, and
we define a sandwich operation on it as follows: if $a,b\in\Arr\irx$ such that $\al a=\om b$ then
$(a\cw b)$ is the unique element of $S(b'b,aa')$ where $a'\in V(a),\ b'\in V(b)$.
Note that $(a\cw b)$ is independent of the choice of $a',b'$, and $(a\cw b)\in E\big(\irx(\al a,\al a)\big)$.
For completeness, let us mention that also the notion of a {\em natural partial order} can be introduced for regular semigroupoids so that its properties are similar to those well known for regular semigroups.
In particular, the natural partial order of $\irx$ is compatible with $\circ$ if and only if $\irx$ is locally inverse,
and in this case, it is compatible also with $\cw$.

Motivated by the notion of a locally inverse semigroupoid which is a semigroupoid with an additional partial binary operation, now we introduce a notion of a {\em free binary semigroupoid $\free2\irx$ on a graph $\irx$}.
Consider the smallest set $P$ such that it is the disjoint union of its subsets $P_{g,h}\ (g,h\in\Obj\irx)$, and
the following conditions are satisfied:
\begin{enumerate}
\item[(i)] $\hx gh\subseteq P_{g,h}\subseteq \big(\!\Arr\irx\cup\{(,\;\wedge,\;)\}\big)^+$,
\item[(ii)] $p\in P_{g,h},\ q\in P_{h,k}$ imply $pq\in P_{g,k}$,
\item[(iii)] $p\in P_{g,h},\ q\in P_{k,g}$ imply $(p\wedge q)\in P_{g,g}$.
\end{enumerate}
The elements of $P$ are called {\em paths} (more precisely, {\em binary paths}), and if $p\in P_{g,h}$ then
we define $\al p=g$ and $\om p=h$.
The free binary semigroupoid $\free2\irx$ on $\irx$ is defined as follows: its set of objects and arrow sets are $\Obj\free2\irx=\Obj\irx$ and  $\free2\irx(g,h)=P_{g,h}\ (g,h\in \Obj\free2\irx)$, respectively, and
the operations are those in (ii) and (iii) above.
Notice that $\Arr\free2\irx\subseteq \free2{\Arr\irx}$.

Let us `double' $\irx$ as follows. Consider a graph $\irx '$ such that 
$\Obj{\irx '}=\Obj\irx$, the set $\Arr{\irx '}$ is disjoint 
from $\Arr\irx$, and a bijection 
$${}'\colon\hx gh\to \irx '(h,g),\quad a\mapsto a'$$
is fixed for every $g,h\in \Obj{\irx '}$.
Define the graph $\von{\irx}$ by $\Obj{\von{\irx}}=\Obj\irx$ and 
$\von{\irx}(g,h)=\hx gh\cup \irx '(g,h)\ (g,h\in \Obj{\von{\irx}})$.
Notice that the bijections $'$ from the arrow sets of $\irx $ onto those of $\irx '$ 
determine a bijection from $\Arr\irx$ onto $\Arr{\irx '}$. 
Therefore, $\Arr\irx\cup\Arr{\irx '}$ is a doubling of the set $\Arr\irx$.
For brevity, put $A=\Arr\irx$, and assume that $A'=\Arr\irx '$.
Thus $\Av=\Arr\von{\irx}$ also follows, and 
each (binary) path in $\von{\irx}$ can be also considered as a term in $\free2\Av$.
In particular, a $\wedge$-letter $(a\wedge b)\in (\Av\wedge\Av)$ is a path in $\von{\irx}$ if and only if
$\al a=\om b$. 
Such a $\wedge$-letter will be termed a {\em $\wedge$-loop}.
Obviously, a word $\word am\in \At^+$ is a path in $\von{\irx}$ if and only if $a_i$ is either a letter or 
a $\wedge$-loop for every $i\ (1\le i\le m)$, and 
$\om{a_1}=\al{a_2},\ \om{a_2}=\al{a_3},\ \ldots,\ \om{a_{m-1}}=\al{a_m}$.
It is straightforward that the subgraph of $\free2{\von{\irx}}$ whose arrows are just the (binary) paths in
$\von{\irx}$ belonging to $\At^+$ forms a subsemigroupoid.
This subsemigroupoid is denoted $\widetilde{\irx}^+$.
Equality \eqref{cswedge}, applied only for $u,v\in\Arr{\widetilde{\irx}^+}$ with $\al u=\om v$, defines a
$\wedge$ operation on $\widetilde{\irx}^+$ so that $\widetilde{\irx}^+$ can be also considered a binary semigroupoid.

Given a regular semigroupoid $\irx$, a transformation $^\dagger\colon\Arr\irx\to\Arr\irx$ is called 
an {\em inverse unary operation} on $\irx$ if $a^\dagger\in V(a)$ for any arrow $a$.

Let $\irx$ be a semigroupoid and $S$ a semigroup. If 
$\ell\colon\irx\to S$ is a morphism of semigroupoids, i.e.,
$\ell(a\circ b)=\ell(a)\cdot\ell(b)$ for any
pair of consecutive arrows $a,b$ in $\irx$
then $\ell$ is said to be a {\em labelling of $\irx$ by $S$}.
Note that if both $\irx$ and $S$ are locally inverse then $\ell$ is also a binary morphism.

\section{Main result}

The aim of the paper is to prove the following embedding theorem.

\begin{Thm}\label{main}
Let $S$ be an $E$-solid locally inverse semigroup and $\rho$ an inverse semigroup congruence on $S$
such that the idempotent classes of $\rho$ are completely simple subsemigroups in $S$.
Then the extension $\Sro$ can be embedded into a $\lambda$-semidirect product extension of a
completely simple semigroup by $S/\rho$.
\end{Thm}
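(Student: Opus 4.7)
The plan is to follow the canonical embedding technique of \cite{11}, adapting it from the purely inverse setting to the locally inverse one (with sandwich operation $\wedge$) and to the completely simple kernel components. Concretely, I will construct a completely simple semigroup $C$, an action $\veps\colon T\to\End(C)$ of $T:=S/\rho$ on $C$ by endomorphisms, and a map $\gamma\colon S\to C$ such that
$$\psi\colon S\to\lamsd CT,\quad s\mapsto\bigl(\gamma(s),\,s\rho\bigr),$$
is an injective binary semigroup homomorphism. Since the second coordinate of $\psi$ is $\rho^\natural$, this is an embedding of extensions by Result~\ref{lsdtul}.

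The first step is to fix an inverse unary operation ${}^\dagger$ on $S$ compatible with $\rho$ (that is, $s^\dagger\rho=(s\rho)^{-1}$) and to introduce a graph $\irx$ whose object set is $E_T$ and whose arrow set is $S$, declaring $\al s=(s\rho)(s\rho)^{-1}$ and $\om s=(s\rho)^{-1}(s\rho)$, with ${}^\dagger$ playing the role of the doubling bijection ${}'$. Pass to the binary subsemigroupoid $\widetilde{\irx}^+\subseteq\free2{\von{\irx}}$ described before Proposition~\ref{Thetacsmod}. Define $\chi$ to be the semigroupoid congruence on $\widetilde{\irx}^+$ generated by the instances of $\Iota\cup\Upsilon_3\cup\Upsilon_4\cup\Upsilon_5$ from Lemma~\ref{Thetatcs} that lie in the local endomorphism sets $\widetilde{\irx}^+(e,e)$, together with the ``$S$-evaluation'' relations that identify every two-letter path $a\cdot b$ with the single letter $ab\in\Arr\irx=S$ whenever $a,b$ are consecutive, and every $\wedge$-loop $(a\wedge b)$ with the single letter $a\wedge b\in\Arr\irx=S$. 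By hypothesis each idempotent $\rho$-class $e\rho$ is completely simple, so Proposition~\ref{Thetacsmod} and Lemma~\ref{Thetatcs} guarantee that the quotient semigroupoid $\irc:=\widetilde{\irx}^+/\chi$ is locally inverse and that every local endomorphism semigroup $\irc(e,e)$ is completely simple.

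Next I would use the natural action of $T$ on the semilattice $E_T$ of its idempotents to consolidate $\irc$ into a single completely simple semigroup $C$ equipped with the action $\veps$ of $T$: arrows of $\irc$ are identified across objects via $T$-translates, and $\veps_t$ is induced by pre-composition with the lifted action on arrows. Set $\gamma(s)$ to be the image in $C$ of the class $[s]_\chi\in\irc(\al s,\om s)$ and define $\psi(s)=(\gamma(s),s\rho)$. That $\psi$ is a binary semigroup homomorphism is a direct verification using the twisted multiplication of $\lamsd CT$, the $S$-evaluation relations built into $\chi$, and the fact that every morphism of locally inverse semigroups respects $\wedge$.

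The crux is injectivity. Suppose $s\rho=t\rho$ and $[s]_\chi=[t]_\chi$; we must conclude $s=t$. The natural labelling $\ell\colon\widetilde{\irx}^+\to S$, which sends a word $\word am$ to the $S$-product $a_1\cdots a_m$ and a $\wedge$-loop $(a\wedge b)$ to $a\wedge b$, obviously respects the $S$-evaluation relations. One must show that it also respects the Lemma~\ref{Thetatcs} relations when they are read inside a local set $\widetilde{\irx}^+(e,e)$; this follows from Lemma~\ref{Thetatcs} itself together with complete simplicity of $e\rho$, because loops at $e$ are labelled into $eSe\cap(\text{sub-union of }\rho\text{-classes})$ in a way that lands in completely simple pieces. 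The delicate point, and the main obstacle, is precisely that the bi-identity relations are valid only inside a single $\rho$-class while the $S$-evaluation relations cut across classes; to reconcile these one needs Result~\ref{li-basic} to transport order-relations between strata of $\Ker\rho$, plus the Yamada--Hall description \cite{15} to identify the core of $S$ inside $\Ker\rho$. Once $\ell$ is shown to factor through $\chi$ on each fiber over $T$, evaluating at single-letter paths gives $s=\ell(s)=\ell(t)=t$, completing the argument.
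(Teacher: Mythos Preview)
Your outline starts in the right place --- the canonical embedding technique of \cite{11}, a derived graph/semigroupoid, the bi-invariant congruence for $\ircs$ from Proposition~\ref{Thetacsmod} and Lemma~\ref{Thetatcs}, and an action of $T$ --- and that is indeed what the paper does. But the injectivity argument, which is the real content of the theorem, has a genuine gap.

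You propose to show injectivity by checking that the natural labelling $\ell\colon\widetilde{\irx}^+\to S$ respects the generating relations of $\chi$. The difficulty is that the congruence one actually needs is a \emph{semigroup} congruence on all of $\At^+$, not merely a semigroupoid congruence on the paths. Concretely, a $\ircs$-derivation connecting two one-letter paths $a$ and $b$ may pass through words that are \emph{not} paths in $\von{\irc}$: a step of type (T3b) or (T4b) can insert a $\wedge$-letter $(y\wedge x)$ with $\al y\neq\om x$, which is not a $\wedge$-loop. On such intermediate words the labelling $\ell$ is not even defined, so ``$\ell$ factors through $\chi$'' has no meaning there. Restricting the generating relations to the local sets $\widetilde{\irx}^+(e,e)$ does not help: if you only impose path-compatible instances of $\Upsilon_3,\Upsilon_4$, you no longer know that the quotient has completely simple local monoids (Proposition~\ref{Thetacsmod} is about the free \emph{semigroup}, not about the local pieces of a semigroupoid), and the later ``consolidation into a single completely simple $C$ via $T$-translates'' is left unspecified.

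The paper handles exactly this obstruction. It works with the full free semigroup $\At^+$ (so $K=\At^+/\thetat$ is completely simple for free), introduces the subsemigroupoid $\widehat\irc$ of \emph{stable} arrows (Lemmas~\ref{ujall}--\ref{komp}) to get a well-behaved evaluation $\;\widehat{}\;$, and then develops an elaborate bookkeeping with \emph{bracketed words} $W,\Wr{},\Wl{}$ and the invariant $\whwp(\,\cdot\,)$ (Lemmas~\ref{word-char}--\ref{deformedlemma-bracketed}, Propositions~\ref{stepS1}--\ref{stepTa}) that tracks where a derivation has temporarily left the path structure and proves that $\whwp$ is preserved by every derivation step. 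Your sketch does not supply any mechanism to bridge the non-path intermediate words, and the appeal to Result~\ref{li-basic} and \cite{15} in your last paragraph does not address this combinatorial issue. That is the missing idea.
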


Recall that, in an $E$-solid semigroup, the idempotent congruence classes of the least inverse semigroup
congruence are completely simple subsemigroups, see \cite{15}. 
Taking into account Result~\ref{lsdtul} and that both classes of $E$-solid and of locally inverse semigroups are closed under taking regular subsemigroups,
we immediately deduce the following characterization of $E$-solid locally inverse semigroups.

\begin{Cor}\label{cor32}
A regular semigroup is $E$-solid and locally inverse if and only if it is embeddable in a $\lambda$-semidirect product
of a completely simple semigroup by an inverse semigroup.
\end{Cor}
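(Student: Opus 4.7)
The plan is to prove the two directions of the equivalence separately, noting that essentially all the work has already been done; what remains is to assemble the ingredients already present in Section 2 together with the main Theorem~\ref{main}.

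For the "only if" direction, let $S$ be a regular $E$-solid locally inverse semigroup. I would invoke the result of Yamada and Hall~\cite{15} quoted in the preliminaries: since $S$ is $E$-solid, the least inverse semigroup congruence $\rho$ on $S$ is a congruence over $\ircs$, i.e., each idempotent $\rho$-class is a completely simple subsemigroup of $S$. Thus $\rho$ satisfies precisely the hypothesis of Theorem~\ref{main}, so the extension $\Sro$ embeds into a $\lambda$-semidirect product extension $(\lamsd KT, \vartheta_2)$ with $K$ completely simple and $T\cong S/\rho$ inverse. In particular, $S$ itself embeds (as a regular subsemigroup) into the $\lambda$-semidirect product $\lamsd KT$ of a completely simple semigroup by an inverse semigroup, as required.

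For the "if" direction, suppose $S$ embeds as a regular subsemigroup of some $\lamsd KT$ with $K$ completely simple and $T$ inverse. By Result~\ref{lsdtul}, the $\lambda$-semidirect product $\lamsd KT$ is itself an $E$-solid locally inverse semigroup. Since $\irli$ and $\ires$ are e-varieties, both are closed under taking regular subsemigroups; hence $S$, being a regular subsemigroup of the $E$-solid locally inverse semigroup $\lamsd KT$, is itself $E$-solid and locally inverse.

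There is no real obstacle here beyond carefully citing the earlier results: Theorem~\ref{main} does the heavy lifting for the forward direction, while Result~\ref{lsdtul} plus closure of $\irli\cap\ires$ under regular subsemigroups gives the converse. The only thing to be slightly careful about is to point out explicitly that an embedding of the extension $\Sro$ in particular yields an injective semigroup homomorphism $S\to \lamsd KT$, so that the embedding statement at the level of extensions really does specialize to the embedding statement at the semigroup level asked for in Corollary~\ref{cor32}.
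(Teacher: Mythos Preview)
Your proposal is correct and follows essentially the same route as the paper: the forward direction applies Theorem~\ref{main} to the least inverse semigroup congruence (whose idempotent classes are completely simple by Yamada--Hall), and the converse combines Result~\ref{lsdtul} with closure of the e-varieties $\irli$ and $\ires$ under regular subsemigroups. Your extra remark that an embedding of extensions in particular yields an injective semigroup homomorphism is a helpful clarification.
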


In particular, this statement provides a structure theorem that constructs $E$-solid locally inverse semigroups from
completely simple and inverse semigroups by means of two fairly simple constructions: forming $\lambda$-semidirect product
and taking regular subsemigroup.

Note that, when restricting our attention to inverse semigroups, the extensions considered in Theorem~\ref{main} are 
just the idempotent separating extensions. 
Thus the following weaker version of the main result of \cite{8} easily follows from Theorem~\ref{main}.

\begin{Cor}
If $S$ is an inverse semigroup and $\rho$ an idempotent separating congruence on $S$
then the extension $\Sro$ can be embedded into a $\lambda$-semidirect product extension of a
group by $S/\rho$.
\end{Cor}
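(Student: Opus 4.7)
The plan is to deduce this corollary directly from Theorem~\ref{main}, by verifying its hypotheses and then observing that in the present restricted setting the target completely simple semigroup can be taken to be a group.

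First I would check the hypotheses of Theorem~\ref{main}. Any inverse semigroup $S$ is locally inverse (each local submonoid $eSe$ is inverse) and $E$-solid (its core is the semilattice $E_S$, which is trivially completely regular). An idempotent separating congruence $\rho$ on $S$ satisfies $\rho\subseteq\mathcal{H}$, so each idempotent $\rho$-class $e\rho$ lies in the group $H_e$ and is therefore a subgroup of $S$; in particular, every idempotent $\rho$-class is a completely simple subsemigroup. Finally, $S/\rho$ is inverse as a homomorphic image of an inverse semigroup. Hence Theorem~\ref{main} applies and yields an embedding $\psi$ of $\Sro$ into a $\lambda$-semidirect product extension $(\lamsd KT,\vartheta_2)$ with $K$ completely simple and $T=S/\rho$.

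It remains to replace $K$ by a group. Under $\psi$, the kernel $\Ker\rho$---which is a Clifford semigroup, being a semilattice of groups---maps into $\Ker\vartheta_2$, which by Result~\ref{lsdtul} is a strong semilattice of the completely simple subsemigroups $K_e=\{a\in K:\act ea=a\}$ of $K$. Consequently, for every $e\in E_T$ the image $\psi(e\rho)$ sits inside a single $\mathcal{H}$-class (a subgroup) of $K_e$. The cleanest route to finish is to inspect the construction behind Theorem~\ref{main}: its canonical embedding technique (from \cite{11}) assembles $K$ out of data supplied by the idempotent $\rho$-classes of $S$. When these classes are all groups, a direct inspection should show that the resulting $K$ is itself a group $G$, so that $\psi$ is already an embedding of $\Sro$ into $(\lamsd GT,\vartheta_2)$, which is precisely the required form.

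The main obstacle is this last step---confirming that the construction in the proof of Theorem~\ref{main} does indeed produce a group when its inputs are groups---since without the explicit construction one is forced instead to extract, after the fact, a $T$-invariant subgroup $G\le K$ such that $\psi(S)\subseteq \lamsd GT$, which is more delicate than the direct verification.
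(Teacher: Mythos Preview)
Your overall strategy---verify the hypotheses of Theorem~\ref{main} and invoke it---is exactly what the paper does; the paper gives no argument beyond the sentence that the corollary ``easily follows from Theorem~\ref{main}'', so your verification of the hypotheses is already more than the paper supplies.

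That said, there is a genuine gap in passing from the completely simple semigroup $K$ furnished by Theorem~\ref{main} to a group, and your proposed resolution---that the canonical $K$ built in Section~4 is itself a group when $S$ is inverse---does not hold. Take $S$ to be the two-element semilattice $\{0,1\}$ with $\rho$ the identity congruence. Then $\irc$ has two connected components (one at object $1$, one at object $0$), and since the relations $\Xit_{21},\Xit_{22}$ only relate arrows that are composable in $\irc$, no relation in $\thetat$ links an arrow at $1$ to an arrow at $0$. Consequently the $\wedge$-letters $(p\wedge p)$ and $(q\wedge q)$, where $p=(1,1,1)$ and $q=(0,1,0)$, represent distinct idempotents of $K$, so $K$ is not a group. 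Your fallback---extracting a $T$-invariant subgroup $G\le K$ with $\kappa(S)\subseteq \lamsd GT$---fails in the same example: the action satisfies $\act 0p=q$, so any $T$-invariant subset containing $p$ also contains $q$, and these lie in different $\irr$-classes of $K$.

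The corollary is nonetheless true: it is the special case $\iru=\hbox{all groups}$ of the main result of \cite{8}, and the paper explicitly labels it as such. To obtain it from the machinery of this paper one would rerun the canonical construction of \cite{11} with the e-variety of groups in place of $\ircs$ (which is exactly what \cite{8} does), rather than invoke Theorem~\ref{main} as a black box. The paper's ``easily follows'' should be read in that informal spirit.
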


\section{Construction}

In this section the canonical construction of \cite{11} is adapted to derive an embeddability
criterion for the extensions considered in the paper.

Throughout this section, 
let $\Sro$ be an extension by an inverse semigroup where $S$ is an $E$-solid locally inverse
semigroup and $\rho$ is over the class $\ircs$ of all completely simple semigroups.
For brevity, denote the factor semigroup $S/\rho$ by $T$ and its elements by lower 
case Greek letters. 
Recall that $\Ker\rho$ is a strong semilattice of completely simple semigroups. 

By making use of Result~\ref{li-basic}, it is routine to 
extend a well-known property of strong semilattices of completely simple semigroups
(cf.\ \cite[Lemma II.4.6(ii) and Theorem IV.1.6(iii),(iv)]{P}) to $E$-solid locally inverse semigroups as follows.

\begin{Lemma}\label{Sro-basic}
For every $\alpha,\beta\in T$ with $\alpha\ge\beta$, and for every $s\in\alpha$, 
there exists a unique $t\in\beta$ such that $s\ge t$.
\end{Lemma}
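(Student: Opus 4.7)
The plan is to reduce the lemma to the well-known analogous property of strong semilattices of completely simple semigroups (which applies to $\Ker\rho$) and to transfer the conclusion to the possibly non-idempotent element $s\in\alpha$ via Result~\ref{li-basic}.

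Set $\iota=\beta\alpha^{-1}\in E(T)$, so that $\iota\le\alpha\alpha^{-1}$ and $\beta=\iota\alpha$, and fix an inverse $s^*\in V(s)$. Then $ss^*$ is an idempotent of $S$ lying in the completely simple $\rho$-class $\alpha\alpha^{-1}$, while $\iota$ is a completely simple $\rho$-class sitting below $\alpha\alpha^{-1}$ in the strong semilattice structure of $\Ker\rho$. The result cited from~\cite{P} then produces a unique idempotent $f\in\iota$ with $f\le ss^*$ in $\Ker\rho$, hence in $S$ (as $\Ker\rho$ contains every idempotent of $S$, and the natural partial order restricted to $E_S$ depends only on the multiplication inherited from $S$).

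For existence, take $t=fs$. Since $S$ is locally inverse, the natural partial order is compatible with multiplication, so $f\le ss^*$ implies $t=fs\le ss^*s=s$. Moreover $t\rho=(f\rho)(s\rho)=\iota\alpha=\beta$, and $fss^*=f$ together with $fs=t$ yields $ts^*=f$, whence $t\,\irr\,f$ in $S$.

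For uniqueness, let $t'\in\beta$ satisfy $t'\le s$. The standard description of the natural partial order in a regular semigroup supplies an inverse $s^*_0\in V(s)$ and an idempotent $e\le ss^*_0$ with $t'=es$; a direct computation shows $t'\,\irr\,e$. Since $e\rho=\iota$, the element $e$ is an idempotent of the completely simple class $\iota$ lying below $ss^*_0$, so the uniqueness clause of the strong semilattice property identifies $e$ with the image of $ss^*_0$ under the structure homomorphism $\phi$ from the $\alpha\alpha^{-1}$-component to the $\iota$-component of $\Ker\rho$. As $ss^*_0$ and $ss^*$ are $\irr$-equivalent in the full subsemigroup $\alpha\alpha^{-1}$ of $S$ (both being $\irr$-related to $s$ in $S$), their $\phi$-images $e$ and $f$ are $\irr$-equivalent in $\iota$, hence in $S$. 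Therefore $t'\,\irr\,e\,\irr\,f$, and Result~\ref{li-basic}, applied to $f\le ss^*$ with $s\in R_{ss^*}$, shows that $R_f$ contains a unique element below $s$; since both $t$ and $t'$ qualify, $t'=t$.

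The main obstacle I expect is the interplay between two technical points at the start of the uniqueness argument: producing the factorization $t'=es$ with $e$ an idempotent below some $ss^*_0\in V(s)$, and tracking how the relation $\irr$ passes through the structure homomorphism $\phi$. Both are routine in the present setting --- the first is the standard Nambooripad description of the natural partial order in a regular semigroup, while the second follows from the fact that $\phi$ is a homomorphism between completely simple semigroups and therefore preserves Green's relation $\irr$. Once these pieces are in place, the reduction to the strong semilattice of completely simple semigroups inside $\Ker\rho$, combined with the single application of Result~\ref{li-basic} above, delivers both the existence and the uniqueness of $t$.
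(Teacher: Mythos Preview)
Your proof is correct and follows essentially the same strategy as the paper: both reduce existence and uniqueness to the strong-semilattice structure of $\Ker\rho$ and invoke Result~\ref{li-basic}, the only difference being that you construct $t=fs$ explicitly for existence while the paper obtains $t$ from Result~\ref{li-basic} as well, and you phrase the $\irr$-class argument via the structure homomorphism $\phi$ rather than via the bijection $R\mapsto R^K$. One small slip: $\alpha\alpha^{-1}$ is not a \emph{full} subsemigroup of $S$ (it does not contain all idempotents); what you need, and what actually holds, is that it is a regular (indeed completely simple) subsemigroup, so that $\irr$ restricts correctly.
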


\begin{proof}
Recall that $K=\Ker\rho$ is a full regular subsemigroup in $S$, and so the rule $R\mapsto R^K=R\cap K$
determines a bijection from the set of $\irr$-classes of $S$ onto the set of $\irr$-classes of $K$.
Let $\alpha,\beta\in T$ with $\alpha\ge \beta$, and let $s\in\alpha$, $s'\in V(s)$. 
Then $(ss')\rho=s\rho (s\rho)^{-1}=\alpha\alpha^{-1}\ge\beta\beta^{-1}$, and
$R_s^K=R_{ss'}^K$ is an $\irr$-class in the completely simple subsemigroup $\alpha\alpha^{-1}$ of $K$.
Since $K$ is a strong semilattice of the completely simple semigroups $\epsilon\ (\epsilon\in E(T))$,
there is a unique $\irr$-class $R^K$ of the completely simple semigroup $\beta\beta^{-1}$ such that
$R^K\le R_s^K$. In fact, $R^K$ is the $\irr$-class of $\beta\beta^{-1}$ containing the unique idempotent
$e$ of $K$ (or, equivalently, of $S$) such that $e\in\beta\beta^{-1}$ and $e\le ss'$.
The inequality $R^K\le R_s^K$ implies $R\le R_s$, $R$ being the $\irr$-class of $S$ containing $R^K$.
Finally, we deduce by Result~\ref{li-basic} that there is a unique $t\in R$ with $t\le s$. This proves
the existence of $t$. 
Uniqueness also follows if we observe that $t\le s$ in $S$ and 
$t\rho (t\rho)^{-1}=\beta\beta^{-1}$ imply that, for any $t'\in V(t)$, we have
$R_{tt'}=R_t\le R_s=R_{ss'}$, whence $R_{tt'}^K\le R_{ss'}^K$ follows. Furthermore, $R_{tt'}^K$ is
an $\irr$-class of the completely simple semigroup $\beta\beta^{-1}$, and so $R_{tt'}^K=R^K$. 
\end{proof}

Now we recall the canonical construction of \cite{11} and adapt it to our purposes.

First we define the {\em derived semigroupoid} $\irc$ corresponding to the extension $\Sro$ as follows.
Let $\Obj\irc=T$
and, for any $\alpha,\beta\in T$, let
$$\hc\alpha\beta= \bigl\{ (\alpha,s,\beta)\in T\times S\times T : 
\alpha\cdot s\rho=\beta\hbox{ and }\beta\cdot (s\rho)^{-1}=\alpha \bigr\}.$$
Therefore 
$\al a=\alpha$ and $\om a=\beta$ for any arrow $a=(\alpha,s,\beta)\in\Arr\irc$.
Composition is defined in $\irc$ in the following manner:
if $(\alpha,s,\beta)\in\hc\alpha\beta$ and
$(\beta,t,\gamma)\in\hc\beta\gamma$
then
$$ (\alpha,s,\beta)\circ (\beta,t,\gamma)=(\alpha,st,\gamma). $$
Clearly, this operation is associative, and so $\irc$ forms a
semigroupoid. 
Furthermore, by putting $\ell(a)=s$ for every arrow
$a=(\alpha,s,\beta)\in\Arr\irc$,
we define a labelling of $\irc$ by $S$.
Since $S$ is a regular semigroup, $\irc$ is a
regular semigroupoid, in which
$V\big( (\alpha,s,\beta) \big)=
\{ (\beta,s^*,\alpha):s^*\in V(s) \}$.
Hence, for every $\alpha\in T$, the semigroup
$\irc(\alpha, \alpha)$ 
is easily seen to be regular and isomorphic to a subsemigroup
of the locally inverse semigroup $S$. 
Therefore $\irc$ is a locally inverse semigroupoid,
and so the sandwich operation $\cw$ is also defined, and the natural partial order of
$\irc$, where
$a\le b$ for any $a,b\in\Arr\irc$ if and only if $\al a=\al b$, $\om a=\om b$
and $\ell(a)\le\ell(b)$,
is compatible both with $\circ$ and $\cw$.

Consider the graphs $\irc '$ and $\von{\irc}$ corresponding to $\irc$, and put
$A=\Arr\irc$, $A'=\Arr\irc '$.
Then we have $\Av=A\cup A'=\Arr\von{\irc}$.

Let us choose and fix an inverse unary operation $^\dagger$ on $S$. 
This determines an inverse unary operation,
also denoted by $^\dagger$, on $\irc$ by letting $(\alpha,s,\beta)^\dagger=(\beta,s^\dagger,\alpha)$ 
for every
$(\alpha,s,\beta)\in \Arr\irc$.
Consider the congruence $\theta$ on the free 
binary semigroup $\free2\Av$ generated by
$$\Theta(\ircs,A) \cup \Xi_1 \cup \Xi_2$$
where $\Theta(\ircs,A)$ is the bi-invariant congruence on 
$\free2\Av$ corresponding to $\ircs$ (see Result \ref{Thetacs}), and
\begin{eqnarray*}
\Xi_1&=&\left\{(a',a^\dagger) : a\in A\right\},\\
\Xi_2&=&\left\{(ab,c) : a,b,c\in A\hbox{ and } 
a\circ b=c\ \hbox{ in }\irc\right\}.
\end{eqnarray*}
The factor semigroup $\free2\Av/\theta$ is clearly isomorphic to the factor semigroup of $\bfr \ircs A$
over the congruence generated by $\Xi_1\cup\Xi_2$.
In \cite{11}, this is the first factor of the $\lambda$-semidirect product constructed to embed the extension 
$\Sro$ into.
Moreover, it is proved to be independent, up to isomorphism, of the choice of the inverse unary operation of $S$ involved in the construction.

Now we apply the idea of replacing each term of $\free2\Av$ by the word belonging to $\At^+$ which is obtained from it 
by applications of (R0). 
Result \ref{Thetacs}, Proposition \ref{Thetacsmod} and Lemma \ref{Thetatcs} imply that the completely simple semigroup $\free2\Av/\theta$ is isomorphic
to $\At^+/\thetat$ where $\thetat$ is the semigroup congruence generated by
$\mathrm{I}\cup \Upsilon\cup \Xit_1\cup \Xit_{21}\cup \Xit_{22}$
where
\begin{eqnarray*}
\Xit_1\!\!\!&=&\!\!\!\Xi_1,\\
\Xit_{21}
\!\!\!&=&\!\!\!
\left\{(ab,c) : a,b,c\in A,\hbox{ either }a\hbox{ and }b\hbox{ or }c\hbox{ are letter factors,}\right.\\
\!\!\!&\ &\!\!\!\quad \left.\hbox{ and } 
a\circ b=c\ \hbox{ in }\irc\right\},\\
\Xit_{22}
\!\!\!&=&\!\!\!
\left\{\big((a\wedge y),(c\wedge y)\big) : a,c\in A,\ y\in\Av,\hbox{ and } 
a\circ b=c\ \hbox{ in }\irc\right.\\
\!\!\!&\ &\!\!\!\quad \left.\hbox{ for some } b\in A\right\}\\
\!\!\!&\cup&\!\!\!
\left\{\big((y\wedge b),(y\wedge c)\big) : b,c\in A,\ y\in\Av,\hbox{ and } 
a\circ b=c\ \hbox{ in }\irc\right.\\
\!\!\!&\ &\!\!\!\quad \left.\hbox{ for some } a\in A\right\}.
\end{eqnarray*}

By the well-known description of a semigroup congruence generated by a relation, 
we easily deduce the following lemma.

\begin{Lemma}\label{leirtaut}
Let $u,v$ be words in $\At^+$. Then $(u,v)\in \thetat$ if and only if there exists a finite sequence of
words $u=\sero wn=v$ in $\At^+$  such that, for any $i$ $(i=0,1,\ldots,n-1)$, the word $w_{i+1}$
is obtained from $w_i$ by one of the following steps:
\begin{enumerate}
\item[{\rm (S$j$a)}] replacing a section $s$ of $w_i$ by $t$,
\item[{\rm (S$j$b)}] replacing a section $t$ of $w_i$ by $s$
\end{enumerate}
where $j=1,21,22$, $(s,t)\in\Xit_j$, and 
\begin{enumerate}
\item[{\rm (T$j$a)}] replacing a section $s$ of $w_i$ by $t$,
\item[{\rm (T$j$b)}] replacing a section $t$ of $w_i$ by $s$,
\end{enumerate}
where $j=3,4,5$, $(s,t)\in\Upsilon_j$. 
\end{Lemma}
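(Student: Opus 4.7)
My plan is to obtain the lemma from the classical Malcev-style description of the semigroup congruence generated by a binary relation, and then to argue that the generators from $\mathrm{I}$ may be dropped from the list of elementary steps, since they are redundant in the presence of $\Xit_1$ and $\Xit_{21}$.

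First, I invoke the standard description: if $F$ is a semigroup and $R\subseteq F\times F$, then $(u,v)$ lies in the semigroup congruence generated by $R$ precisely when either $u=v$, or there is a finite chain $u=w_0,w_1,\ldots,w_n=v$ in which each $w_{i+1}$ arises from $w_i$ by replacing a section $s$ by $t$ with $(s,t)\in R\cup R^{-1}$. Applied to $F=\At^+$ with the generating set $\mathrm{I}\cup\Upsilon\cup\Xit_1\cup\Xit_{21}\cup\Xit_{22}$ of $\thetat$, this yields an equivalent description of $\thetat$ using exactly the elementary steps (S$j$a), (S$j$b) for $j=1,21,22$ and (T$j$a), (T$j$b) for $j=3,4,5$ listed in the lemma, together with extra steps that use a pair from $\mathrm{I}$.

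The ``if'' direction is then immediate, since each step of the allowed form uses a generator of $\thetat$. For the ``only if'' direction, the task that remains is to eliminate the $\mathrm{I}$-steps. For $a\in A$, a step $aa'a\leftrightarrow a$ is simulated by
\[
aa'a\;\xrightarrow{\mathrm{(S1a)}}\;aa^\dagger a\;\xrightarrow{\mathrm{(S21a)}}\;(a\circ a^\dagger)\,a\;\xrightarrow{\mathrm{(S21a)}}\;a,
\]
where both $(aa^\dagger,\,a\circ a^\dagger)$ and $((a\circ a^\dagger)\,a,\,a)$ belong to $\Xit_{21}$ because $a^\dagger\in V(a)$ in $\irc$; the reverse rewriting is obtained by traversing the same chain backwards via the (b)-variants. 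For $a=a_0'\in A'$, the factor $aa'a$ equals $a_0'a_0a_0'$, and the analogous simulation first turns both outer occurrences of $a_0'$ into $a_0^\dagger$ via two (S1a)-steps, then collapses $a_0^\dagger a_0 a_0^\dagger$ to $a_0^\dagger$ by two (S21a)-steps (driven by the equalities $a_0^\dagger\circ a_0 \in A$ and $a_0^\dagger\circ a_0\circ a_0^\dagger=a_0^\dagger$ in $\irc$), and finally restores $a_0'$ by a single (S1b)-step.

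The only nontrivial verification is this simulation of $\mathrm{I}$-steps, and it is a direct computation in $\irc$; the rest is a transcription of the standard characterization into the notation of the lemma, so I do not anticipate any real obstacle.
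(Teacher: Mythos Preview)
Your proposal is correct and follows exactly the approach the paper has in mind: the paper simply states that the lemma is ``easily deduced'' from the well-known description of a semigroup congruence generated by a relation, without spelling out why the generators from $\mathrm{I}$ can be omitted. You supply precisely the missing detail, namely the simulation of an $\mathrm{I}$-step $xx'x\leftrightarrow x$ by a chain of (S1)- and (S21)-steps using $a^\dagger\in V(a)$ in $\irc$, treating the cases $x\in A$ and $x\in A'$ separately; the computations are correct.
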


Put $K=\At^+/\thetat$. Since 
$\Thetat(\ircs,A)\subseteq\thetat$, we have
$K\in\ircs$.
The equality
$$ \act\pi{(\alpha,s,\beta)}=(\pi\alpha,s,\pi\beta)  \qquad  (\pi\in T,\ (\alpha,s,\beta)\in A)$$
defines an action of $T$ on the semigroupoid $\irc$ in the sense that the following properties hold:
$\act\pi{(a\circ b)}=\act\pi a\circ\acth\pi b$
for any $\pi\in T$ and any arrows $a,b\in A$ with $\om a=\al b$,
and also $\act\pi{(\act\nu{a})}=\act{\pi\nu}a$ for every $\pi,\nu\in T$ and $a\in A$. 
Note that $\act\pi{(a\cw b)}=(\act\pi a\cw\acth\pi b)$ also holds
for any $\pi\in T$ and any arrows $a,b\in A$ with $\al a=\om b$
since morphisms of locally inverse semigroupoids respect $\cw$.
Moreover, it is also clear
that $\act\pi{(a^\dagger)}=(\act\pi{a})^\dag$ 
for every $a\in A$ and $\pi\in T$.
This ensures that, for every $\pi\in T$, the mapping
$A\to A,\ a\mapsto \act\pi a$
can be naturally extended to an endomorphism
$\veps_\pi$ of $K$ such that 
$\veps_\varsigma\veps_\pi=\veps_{\pi\varsigma}$ holds for
every $\pi,\varsigma\in T$.
Therefore $\pi\mapsto\veps_\pi$ defines an action $\veps$ of $T$ on $K$
by the rule $\act\pi{(u\thetat)}=(\act\pi u)\thetat\ (\pi\in T$,\ $u\in \At^+)$
where $\act\pi u$ is the word obtained from $u$ by replacing each letter $a$ of $u$ by $\act\pi a$.

Consider the $\lambda$-semidirect product $\lamsd{K}{T}$ determined by this action, and define a mapping
$$
\kappa\colon S\to\lamsd{K}{T}
\quad\hbox{by}\quad
s\mapsto\bigl((s\rho(s\rho)^{-1},s,s\rho)\thetat,s\rho\bigr).
$$
It is easily seen that $\kappa$ is a homomorphism, and the congruence induced by $\kappa\pi_2$ is just $\rho$. 
Furthermore, the following important property of $\kappa$ is implied by the main result of \cite{11}:

\begin{Res}\label{crit}
Let $S$ be an $E$-solid locally inverse semigroup, and let 
$\rho$ be an inverse semigroup congruence on $S$ over $\ircs$.
Then the extension $\Sro$ is embeddable in a
$\lambda$-semidirect product extension of a completely simple semigroup by an inverse semigroup if and only if $\kappa$ is an
embedding,  or, equivalently, if and only if the relations   $s\,\rho\,t$ in $S$ and
$(s\rho(s\rho)^{-1},s,s\rho)\,\thetat\,(t\rho(t\rho)^{-1},t,t\rho)$ in $\At^+$ imply $s=t$ for every $s,t\in S$.
\end{Res}

In the next section we apply this result to prove our main result Theorem \ref{main}.
By Lemma~\ref{leirtaut}, this is based on the study of the
a finite sequences
$u=\sero wn=v$ of words in $\At^+$ where, for every $i$ $(i=0,1,\ldots,n-1)$, the word $w_{i+1}$
is obtained from $w_i$ by one of the steps
{\rm (S$j$a), (S$j$b) with $j=1, 21, 22$ and (T$j$a), (T$j$b) with $j=3,4,5$.
Later on, such a sequence $u=\sero wn=v$ is called a 
$\ircs$-{\em derivation from $u$ to $v$ in $\At^+$}.

\section{Proof of Theorem \ref{main}}

This section is devoted to proving that the homomorphism $\kappa$ introduced in the previous section is,
indeed, an embedding.

Let $\Sro$ be an extension where
$S$ is an $E$-solid locally inverse semigroup and $\rho$ is an inverse semigroup congruence on $S$
over $\ircs$. Consider the construction --- in particular, 
$T$, $\irc$, $\von{\irc}$, $A$, $\Av$, $\At^+$, ${}^\dagger$, $\thetat$, 
$K$ and $\kappa$ ---
corresponding to $\Sro$ as introduced in the previous section. 

Now we observe that adjacency in the semigroupoid $\irc$ is closely
related to Green's relation $\irr$ in $T$, and that there is a crucial 
connection between the endpoints and the
label of an arrow.

\begin{Lemma}\label{tul}
Let $(\alpha,s,\beta)\in T\times S\times T$.
\begin{enumerate}
\item\label{tul1}
We have $(\alpha, s, \beta)\in\Arr\irc$ if and only if  $\alpha\irrr\beta$ in $T$ and $s\rho\ge\alpha^{-1}\beta$.
\item\label{tul2}
If $(\alpha,s,\beta)\in\Arr\irc$ then  
$s\rho (s\rho)^{-1}\geq\alpha^{-1}\alpha$ and
$(s\rho)^{-1} s\rho\geq\beta^{-1}\beta$ in $T$.
\item\label{tul3}
If $(\alpha,s,\beta)\in\Arr\irc$ then the following properties are
equivalent:
\begin{enumerate}
\item[(a)] $s\rho = \alpha^{-1}\beta$,
\item[(b)] $s\rho (s\rho)^{-1} = \alpha^{-1}\alpha$,
\item[(c)] $(s\rho)^{-1} s\rho = \beta^{-1}\beta$.
\end{enumerate}
\end{enumerate}
\end{Lemma}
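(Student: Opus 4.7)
\textbf{Proof plan for Lemma~\ref{tul}.} Everything reduces to elementary arithmetic in the inverse semigroup $T$, using that in $T$ the natural partial order $\le$ is compatible with multiplication and inversion, that $x\le y$ iff $x=xx^{-1}y$ iff $x=yx^{-1}x$, and that an idempotent $e$ satisfies $ex=x$ iff $e\ge xx^{-1}$. Recall that $(\alpha,s,\beta)\in\Arr\irc$ means exactly that $\alpha\cdot s\rho=\beta$ and $\beta\cdot(s\rho)^{-1}=\alpha$.

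For part~\eqref{tul1}, the forward direction is immediate: the two defining equations of $\Arr\irc$ literally witness $\alpha\,\irrr\,\beta$ in $T$, and multiplying $\alpha\cdot s\rho=\beta$ on the left by $\alpha^{-1}$ gives $\alpha^{-1}\beta=\alpha^{-1}\alpha\cdot s\rho\le s\rho$. For the reverse direction, assume $\alpha\,\irrr\,\beta$ (so $\alpha\alpha^{-1}=\beta\beta^{-1}$) and $s\rho\ge\alpha^{-1}\beta$. The key identity is
$$\alpha^{-1}\beta=(\alpha^{-1}\beta)(\alpha^{-1}\beta)^{-1}\cdot s\rho=\alpha^{-1}\beta\beta^{-1}\alpha\cdot s\rho=\alpha^{-1}\alpha\cdot s\rho,$$
obtained from $y\le x\Rightarrow y=yy^{-1}x$ and the assumption $\alpha\alpha^{-1}=\beta\beta^{-1}$. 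Multiplying on the left by $\alpha$ yields $\beta=\alpha\cdot s\rho$. The other equation $\alpha=\beta\cdot(s\rho)^{-1}$ then follows once we have established part~\eqref{tul2}, since $s\rho(s\rho)^{-1}\ge\alpha^{-1}\alpha$ implies $\alpha\cdot s\rho(s\rho)^{-1}=\alpha$.

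Part~\eqref{tul2} is then almost automatic: from $s\rho\ge\alpha^{-1}\beta$ and the compatibility of $\le$ with inversion and multiplication,
$$s\rho(s\rho)^{-1}\ge(\alpha^{-1}\beta)(\alpha^{-1}\beta)^{-1}=\alpha^{-1}\beta\beta^{-1}\alpha=\alpha^{-1}\alpha,$$
using $\alpha\alpha^{-1}=\beta\beta^{-1}$ at the last step. The symmetric computation starting from $(s\rho)^{-1}\ge\beta^{-1}\alpha$ gives $(s\rho)^{-1}s\rho\ge\beta^{-1}\beta$.

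For part~\eqref{tul3}, I would exploit the identity $\alpha^{-1}\beta=\alpha^{-1}\alpha\cdot s\rho$ (and its dual $\alpha^{-1}\beta=s\rho\cdot\beta^{-1}\beta$) already proved in the course of part~\eqref{tul1}. Indeed, (a) holds iff $\alpha^{-1}\alpha\cdot s\rho=s\rho$, which, because $\alpha^{-1}\alpha$ is idempotent, is equivalent to $\alpha^{-1}\alpha\ge s\rho(s\rho)^{-1}$; combined with the reverse inequality supplied by~\eqref{tul2} this is equivalent to (b). The equivalence of (a) and (c) is entirely analogous, working on the right. The implications (a)$\Rightarrow$(b) and (a)$\Rightarrow$(c) can of course also be obtained by direct substitution. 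No step is genuinely difficult; the only care needed is to invoke the correct characterization of $\le$ in each direction, which is why I would state those characterizations at the outset.
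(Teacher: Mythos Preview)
Your proposal is correct and follows essentially the same approach as the paper: elementary computations with the natural partial order in the inverse semigroup $T$. The only organizational difference is that for the reverse implication in part~\eqref{tul1} the paper uses a self-contained sandwich argument (showing $\alpha\cdot s\rho\ge\beta$ and $\beta\cdot(s\rho)^{-1}\ge\alpha$, then squeezing back to equality), whereas you invoke the inequality of part~\eqref{tul2}; this is harmless because your proof of~\eqref{tul2} actually uses only the hypotheses $\alpha\,\irr\,\beta$ and $s\rho\ge\alpha^{-1}\beta$, not the membership $(\alpha,s,\beta)\in\Arr\irc$ itself, so there is no circularity --- but you should make that explicit when writing it up.
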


\begin{proof}
(\ref{tul1})\quad
Let $(\alpha,s,\beta)\in\Arr\irc$; then $\alpha\cdot s\rho=\beta$ and
$\beta\cdot(s\rho)^{-1}=\alpha$ in $T$. 
Hence we deduce $\alpha\irrr\beta$ and
$\alpha^{-1}\alpha\cdot s\rho=\alpha^{-1}\beta$, 
and so $s\rho\ge \alpha^{-1}\beta$ is implied in the inverse semigroup $T$. 
Now let $\alpha$ and $\beta$ be $\irr$-related elements in $T$, that is, let
$\alpha\alpha^{-1}=\beta\beta^{-1}$, and let $s\in S$ with $s\rho\ge
\alpha^{-1}\beta$.
We clearly have 
$\alpha\cdot s\rho\ge \alpha\alpha^{-1}\beta=\beta\beta^{-1}\beta=\beta$ and
$\beta\cdot (s\rho)^{-1}\ge \beta (\alpha^{-1}\beta)^{-1}=
\beta\beta^{-1}\alpha = \alpha\alpha^{-1}\alpha=\alpha$,
whence
$\alpha\ge\alpha\cdot s\rho (s\rho)^{-1}\ge \beta\cdot (s\rho)^{-1}$
and
$\beta\ge \beta\cdot (s\rho)^{-1} s\rho\ge \alpha\cdot s\rho$
follow, respectively.
So, by the definition of $\irc$, we see that $(\alpha,s,\beta)\in\Arr\irc$.

(\ref{tul2})\quad
If $(\alpha,s,\beta)\in\Arr\irc$ then (1) implies
$s\rho (s\rho)^{-1}\ge \alpha^{-1}\beta (\alpha^{-1}\beta)^{-1}=
\alpha^{-1}\beta\beta^{-1}\alpha=\alpha^{-1}\alpha\alpha^{-1}\alpha=
\alpha^{-1}\alpha$, and similarly, 
$(s\rho)^{-1} s\rho\ge \beta^{-1}\beta$.

(\ref{tul3})\quad
Straightforward.
\end{proof}

Lemma \ref{tul}(\ref{tul2}),(\ref{tul3}) indicate that 
some arrows in $\irc$ 
are special in the sense that their labels are `as low as
possible'. An arrow $(\alpha,s,\beta)$ in $\irc$ having the property that 
$s\rho=\alpha^{-1}\beta$ (cf.~(3)) is termed {\em stable}. 
Consider the subgraph $\widehat\irc$ of $\irc$ where $\Obj{\widehat\irc}=\Obj\irc$ and
$\Arr{\widehat\irc}$ consists of all stable arrows of $\irc$.

\begin{Lemma}\label{ujall}
Let $a,b$ be consecutive arrows in $\irc$.
\begin{enumerate}
\item \label{ujall1}
If $a\in\Arr{\widehat\irc}$ then each inverse of $a$ is in 
$\Arr{\widehat\irc}$. 
\item \label{ujall2}
If $a\in\Arr{\widehat\irc}$ then 
$a\circ b\in\Arr{\widehat\irc}$ and 
$a\irrr a\circ b$.
\item \label{ujall3}
If $a\in\Arr{\widehat\irc}$ then 
$(b\cw a)\in\Arr{\widehat\irc}$ and 
$a\irll (b\cw a)$.
\end{enumerate}
\end{Lemma}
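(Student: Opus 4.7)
The plan is to handle the three parts in order, with (\ref{ujall2}) and (\ref{ujall3}) resting on a single ``stability is preserved under composition'' lemma together with Lemma~\ref{Sro-basic}.

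Part~(\ref{ujall1}) is immediate: given $a=(\alpha,s,\beta)\in\Arr{\widehat\irc}$, so $s\rho=\alpha^{-1}\beta$, any inverse of $a$ in $\irc$ has the form $(\beta,s^*,\alpha)$ with $s^*\in V(s)$, and since $T$ is an inverse semigroup, $s^*\rho$ is the unique inverse $\beta^{-1}\alpha$ of $\alpha^{-1}\beta$; this is stability.

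Before attacking (\ref{ujall2}) and (\ref{ujall3}), I would prove the auxiliary fact that \emph{composition with any consecutive arrow preserves stability}: if $a=(\alpha,s,\beta)$ is stable and $b=(\beta,t,\gamma)\in\Arr\irc$, then $(st)\rho=\alpha^{-1}\beta\cdot t\rho=\alpha^{-1}(\beta\cdot t\rho)=\alpha^{-1}\gamma$, so $a\circ b$ is stable; dually, for $c=(\delta,r,\alpha)\in\Arr\irc$, rewriting $\alpha\cdot(r\rho)^{-1}=\delta$ as $r\rho\cdot\alpha^{-1}=\delta^{-1}$ in the inverse semigroup $T$ yields $(rs)\rho=\delta^{-1}\beta$, so $c\circ a$ is also stable. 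This already settles the stability halves of (\ref{ujall2}) and (\ref{ujall3}).

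For the $\irr$-assertion in (\ref{ujall2}), I would fix an inverse $b^*=(\gamma,t^*,\beta)$ of $b$ in $\irc$ and reduce $a\,\irrr\,a\circ b$ to the equality $(a\circ b)\circ b^*=a$, i.e.\ to $stt^*=s$ in $S$ (taking $u=b$ and $v=b^*$). Using Lemma~\ref{tul}(\ref{tul2}) for $b$, which gives $t\rho(t\rho)^{-1}\ge\beta^{-1}\beta$, a short inverse-semigroup computation shows $(stt^*)\rho=s\rho$; moreover $stt^*=s\cdot(tt^*)\le s$ in the natural partial order on $S$ since $tt^*$ is idempotent. Lemma~\ref{Sro-basic}, applied with $\alpha=\beta=s\rho$, then forces $stt^*=s$, because $s$ is the only element of its own $\rho$-class sitting below itself. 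This is the step I expect to be the main obstacle, because it is exactly here that the completely simple structure of the idempotent $\rho$-classes enters, via Lemma~\ref{Sro-basic}, to collapse a natural-order inequality to an equality.

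For (\ref{ujall3}), I would work inside the local semigroup $\irc(\beta,\beta)$. By construction $(b\cw a)$ lies in the sandwich set $S(a'\circ a,\,b\circ b')$, and the standard inclusion $S(e,f)\subseteq L_e\cap R_f$ yields $(b\cw a)\,\irll\,a'\circ a$ in $\irc(\beta,\beta)$, hence in $\irc$. Combined with the trivial relation $a\,\irll\,a'\circ a$ (taking $u=a'$ and $v=a$), transitivity of $\irl$ delivers $a\,\irll\,(b\cw a)$. For stability of $(b\cw a)$, note that $a'$ is stable by Part~(\ref{ujall1}), hence $a'\circ a$ is stable by the auxiliary fact, and then $(b\cw a)=u\circ(a'\circ a)$ for some arrow $u\in\irc(\beta,\beta)$ is stable by the same fact.
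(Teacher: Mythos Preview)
Your stability arguments are fine, and in fact your direct computation $(st)\rho=\alpha^{-1}\beta\cdot t\rho=\alpha^{-1}\gamma$ is slicker than the paper's verification via $(st)\rho\cdot((st)\rho)^{-1}=\alpha^{-1}\alpha$. The problems lie in the Green's-relation halves of (\ref{ujall2}) and (\ref{ujall3}).

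In (\ref{ujall2}) the step ``$stt^*=s\cdot(tt^*)\le s$ since $tt^*$ is idempotent'' is simply false in a locally inverse semigroup. Take $S$ to be a rectangular band $\{1,2\}\times\{1,2\}$ with $\rho$ the universal congruence (so the single idempotent class is $S$ itself, completely simple, and all hypotheses hold). With $s=(1,1)$ and $t=t^*=(2,2)$ one gets $stt^*=(1,2)$, and the natural partial order on a rectangular band is equality, so $stt^*\not\le s$ and $stt^*\neq s$. Thus your route through Lemma~\ref{Sro-basic} collapses; you never actually reach the inequality needed to invoke it. The paper's argument avoids this by working \emph{inside} the completely simple $\rho$-class $\beta^{-1}\beta$: from $(s's)\rho=\beta^{-1}\beta\le(tt')\rho$ one gets $s'stt'\in\beta^{-1}\beta$, and in a completely simple semigroup $x\,\irr\,xy$ always, so $s's\,\irr\,s'stt'$; then $\irr$ being a left congruence gives $s\,\irr\,stt'\,\irr\,st$.

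The same defect reappears in (\ref{ujall3}). There is no ``standard inclusion $S(e,f)\subseteq L_e\cap R_f$''; for $h\in S(e,f)$ one only has $he=h=fh$, i.e.\ $h\le_{\irl}e$ and $h\le_{\irr}f$. In a semilattice with $e>f$ one has $S(e,f)=\{f\}$ and $f\notin L_e$. What makes $(b\cw a)\,\irl\,a'\circ a$ true here is again the completely simple structure: both $a'\circ a$ and $(b\cw a)$ have labels in the completely simple semigroup $\beta^{-1}\beta$ (the latter because $(t\wedge s)\rho=(t\rho)(t\rho)^{-1}\cdot\beta^{-1}\beta=\beta^{-1}\beta$), and in a completely simple semigroup $\le_{\irl}$ coincides with $\irl$. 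Once you insert this justification your argument for (\ref{ujall3}) goes through; your stability proof there via $(b\cw a)=(b\cw a)\circ(a'\circ a)$ is correct.
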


\begin{proof}
(\ref{ujall1})\quad
Clear by definition.

(\ref{ujall2})\quad
Suppose that $a=(\alpha, s, \beta)\in\Arr{\widehat\irc}$ and 
$b=(\beta, t, \gamma)\in\Arr\irc$. 
Then $(s\rho)^{-1}s\rho=\beta^{-1}\beta$ by Lemma~\ref{tul}(\ref{tul3}), and
$\beta\cdot t\rho (t\rho)^{-1}=\beta$.
Thus
\begin{eqnarray*}
\lefteqn{(st)\rho\cdot((st)\rho)^{-1} = }\\
&& = s\rho\cdot t\rho\cdot (t\rho)^{-1}\cdot (s\rho)^{-1} =  
s\rho\cdot \beta^{-1} \beta\cdot t\rho (t\rho)^{-1}\cdot (s\rho)^{-1}   \\
&& = s\rho\cdot \beta^{-1} \beta\cdot (s\rho)^{-1}
   = \alpha^{-1} \alpha.
\end{eqnarray*}
Hence $a\circ b = (\alpha, st, \gamma)$
is stable. 
Moreover, if $s'\in V(s)$, $t'\in V(t)$ in $S$ then 
Lemma \ref{tul}(\ref{tul2}),(\ref{tul3}) imply $(s's)\rho=\beta^{-1}\beta\le(tt')\rho$. 
Since $\beta^{-1}\beta$ is an idempotent $\rho$-class which is a completely simple subsemigroup of $S$ by assumption, 
we obtain that $(s'stt')\rho=\beta^{-1}\beta$ and $s's\irrr s'stt'$.
Since $\irr$ is a left congruence, this implies
$s\irrr stt'\irrr st$
whence
$a\irrr a\circ b$
follows.

(\ref{ujall3})\quad
The proof is similar to that of (\ref{ujall2}).
\end{proof}

An immediate consequence of this lemma is that $\widehat\irc$ is a 
locally inverse subsemigroupoid in $\irc$.
Furthermore, we have the following important property of stable arrows:

\begin{Lemma}\label{tomorit}
For every arrow $a\in\Arr\irc$, there is a unique stable arrow 
$b\in\Arr{\widehat\irc}$ such that $b\le a$.
\end{Lemma}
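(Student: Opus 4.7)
The plan is to reduce the claim directly to Lemma \ref{Sro-basic} applied to the extension $\Sro$. Write $a=(\alpha,s,\beta)\in\Arr\irc$. By Lemma~\ref{tul}(\ref{tul1}), we have $\alpha\irrr\beta$ in $T$ and $s\rho\ge\alpha^{-1}\beta$. In particular, $\alpha^{-1}\beta$ is a well-defined element of $T$ with $s\rho\ge\alpha^{-1}\beta$.

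Since $s$ lies in the $\rho$-class $s\rho$ and $s\rho\ge\alpha^{-1}\beta$, Lemma~\ref{Sro-basic} yields a unique element $t\in S$ such that $t\le s$ and $t\rho=\alpha^{-1}\beta$. I would then form the triple $b=(\alpha,t,\beta)$ and check that $b\in\Arr\irc$. Indeed, using $\alpha\alpha^{-1}=\beta\beta^{-1}$, we compute
$$\alpha\cdot t\rho=\alpha\cdot\alpha^{-1}\beta=\beta\beta^{-1}\beta=\beta\quad\text{and}\quad\beta\cdot(t\rho)^{-1}=\beta\cdot\beta^{-1}\alpha=\alpha\alpha^{-1}\alpha=\alpha,$$
so $b$ is an arrow of $\irc$. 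Moreover, since $t\rho=\alpha^{-1}\beta$, the arrow $b$ is stable, hence $b\in\Arr{\widehat\irc}$. As $a$ and $b$ share the same endpoints and $\ell(b)=t\le s=\ell(a)$, the definition of the natural partial order on $\irc$ gives $b\le a$.

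For uniqueness, suppose $b'=(\alpha,t',\beta)\in\Arr{\widehat\irc}$ satisfies $b'\le a$. The ordering forces the endpoints $\alpha,\beta$ to match and $t'\le s$ in $S$, while stability forces $t'\rho=\alpha^{-1}\beta$. The uniqueness clause of Lemma~\ref{Sro-basic} then gives $t'=t$, hence $b'=b$.

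I do not anticipate any real obstacle here: the only subtle point is recognising that stability of $b$ is exactly the condition that fixes the $\rho$-class of $\ell(b)$ to be $\alpha^{-1}\beta$, so that both existence and uniqueness are immediate applications of Lemma~\ref{Sro-basic}. The Green's-relation information from Lemma~\ref{tul}(\ref{tul1}) is used only to guarantee that $\alpha^{-1}\beta$ lies below $s\rho$ in $T$, which is precisely the hypothesis needed to invoke Lemma~\ref{Sro-basic}.
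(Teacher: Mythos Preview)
Your proof is correct and follows essentially the same route as the paper: both reduce existence and uniqueness directly to Lemma~\ref{Sro-basic}, using Lemma~\ref{tul}(\ref{tul1}) to ensure $s\rho\ge\alpha^{-1}\beta$. The only difference is cosmetic --- you verify explicitly that $b=(\alpha,t,\beta)$ is an arrow of $\irc$, whereas the paper simply invokes Lemma~\ref{tul} for this.
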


\begin{proof}
Let $a=(\alpha, s, \beta)\in\Arr\irc$. If $b=(\alpha, t, \beta)\in\Arr{\widehat\irc}$ with $b\le a$
then, by definition, we have $t\le s$ and $t\rho=\alpha^{-1}\beta$.
On the other hand, we see by Lemma~\ref{Sro-basic} that there exists a unique $t\in S$ such that
$t\le s$ and $t\rho=\alpha^{-1}\beta$. Lemma~\ref{tul} ensures that, in this case, we have
$b=(\alpha, t, \beta)\in\Arr{\widehat\irc}$, and the proof is complete.
\end{proof}

For any arrow $a$, denote by $\widehat a$ the unique stable arrow $b$ with
$b\le a$, and consider the graph morphism 
$\;\widehat{}\,\colon\irc\to\widehat\irc$ whose object mapping is identical and which
assigns $\widehat a$ to $a$ for every $a\in\Arr\irc$.
From now on, we put $\widehat A=\Arr{\widehat\irc}$.

Observe, that the graph morphism $\;\widehat{}\;$ respects the operations of $\irc$,
that is, it constitutes a binary semigroupoid morphism from $\irc$ onto $\widehat\irc$:

\begin{Lemma}\label{komp}
For any $a,b\in A$ with $\om a = \al b$, we have
\begin{enumerate}
\item \label{komp1}
$\widehat{\widehat a}=\widehat a$,
\item \label{komp2} 
$\widehat{a\circ b} = \widehat a\circ\widehat b$,
\item \label{komp3} 
$\widehat{(b\cw a)} = (\widehat b\cw\widehat a)$.
\end{enumerate}
\end{Lemma}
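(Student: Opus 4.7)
The plan is to reduce all three parts of Lemma~\ref{komp} to the uniqueness clause of Lemma~\ref{tomorit}. That lemma tells us that, for any arrow $x$ of $\irc$, the stable arrow $\widehat x$ is characterized by two properties: it is stable, and it lies below $x$ in the natural partial order. Consequently, to identify $\widehat x$ with a proposed arrow $y$, it suffices to check that $y$ is stable and that $y\le x$.

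For part~(\ref{komp1}), observe that $\widehat a$ is stable by its very definition, and $\widehat a\le\widehat a$, so $\widehat{\widehat a}=\widehat a$ follows immediately by uniqueness.

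For part~(\ref{komp2}), first note that $\widehat a\circ\widehat b$ is defined because $\om{\widehat a}=\om a=\al b=\al{\widehat b}$. Since $\widehat a$ is stable and $\widehat b$ is an arrow of $\irc$ consecutive to it, Lemma~\ref{ujall}(\ref{ujall2}) gives that $\widehat a\circ\widehat b$ is stable. Moreover, $\widehat a\le a$ and $\widehat b\le b$, together with the compatibility of the natural partial order of the locally inverse semigroupoid $\irc$ with composition, yield $\widehat a\circ\widehat b\le a\circ b$. By uniqueness of the stable arrow below $a\circ b$, we conclude $\widehat{a\circ b}=\widehat a\circ\widehat b$. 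Part~(\ref{komp3}) is proved in exactly the same way: the sandwich $(\widehat b\cw\widehat a)$ is defined since $\al{\widehat b}=\al b=\om a=\om{\widehat a}$; it is stable by Lemma~\ref{ujall}(\ref{ujall3}) applied with $\widehat a$ in the role of $a$; and the compatibility of $\le$ with $\cw$ (mentioned in Section~2 and already invoked in the definition of $\widehat\irc$) gives $(\widehat b\cw\widehat a)\le(b\cw a)$. Uniqueness then delivers the equality.

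There is no real obstacle here since Lemmas~\ref{ujall} and~\ref{tomorit} do all the work; the only subtlety is to verify that $\widehat a\circ\widehat b$ and $(\widehat b\cw\widehat a)$ are genuinely defined, i.e.\ that passing from $a,b$ to $\widehat a,\widehat b$ preserves consecutiveness, which is immediate from $\widehat a$ and $a$ having the same endpoints. Everything else is a direct application of uniqueness.
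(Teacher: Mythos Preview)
Your proof is correct and follows essentially the same approach as the paper: both arguments use the compatibility of the natural partial order with $\circ$ and $\cw$ to obtain $\widehat a\circ\widehat b\le a\circ b$ and $(\widehat b\cw\widehat a)\le(b\cw a)$, invoke Lemma~\ref{ujall} for stability, and then appeal to the uniqueness clause of Lemma~\ref{tomorit}. Your version is slightly more explicit about why the compositions are defined, but the substance is identical.
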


\begin{proof}
(\ref{komp1})\quad
Straightforward by definition.

(\ref{komp2})\quad
By definition, we have $\widehat a\le a,\ \widehat b\le b$, and so
$\widehat a\circ\widehat b\le a\circ b$. Moreover, $\widehat a\circ\widehat b$ 
is stable by Lemma~\ref{ujall}.
Therefore $\widehat a\circ\widehat b = \widehat{a\circ b}$ follows by 
Lemma~\ref{tomorit}.

(\ref{komp3})\quad
The proof is similar to that of (\ref{komp2}).
\end{proof}

By making use of the inverse unary operation ${}^\dagger$ on $\irc$, we extend the graph morphism 
$\,\widehat{}\,\colon\irc\to\widehat{\irc}$ to a binary semigroupoid morphism
from $\free2{\von{\irc}}$ in the way that we consider the graph morphism
$$\delta\colon\von{\irc}\to\widehat{\irc},\ a\delta=\widehat{a}\ \hbox{and}\ 
a'\delta=(\widehat{a})^\dagger\quad (a\in A),$$
and we define $\,\widehat{}\,\colon\free2{\von{\irc}}\to\widehat{\irc}$
to be the unique extension of $\delta$ to a binary semigroupoid morphism from $\free2{\von{\irc}}$ to 
$\widehat{\irc}$.
The restriction of $\;\widehat{}\;$ to $\widetilde{\irc}^+$, also denoted by $\;\widehat{}\;$, is obviously
a semigroupoid morphism.
In fact, it is also a binary semigroupoid morphism since each semigroup 
$\widehat{\irc}(\alpha,\alpha)\ (\alpha\in T)$ is completely simple.

Now we turn to proving that $\kappa$ is injective, that is, for every $s,t\in S$, the following implication holds
(see Result \ref{crit}):
\begin{equation}\label{to prove}
s\;\rho\;t\quad\hbox{and}\quad
\left(s\rho (s\rho)^{-1},s,s\rho\right)\;\thetat\;
\left(t\rho (t\rho)^{-1},t,t\rho\right) \quad\hbox{imply}\quad s=t.
\end{equation}
Let $s,t\in S$ with $s\,\rho\, t$ and, for brevity, put 
$a=\left(s\rho (s\rho)^{-1},s,s\rho\right)$ and
$b=\left(t\rho (t\rho)^{-1},t,t\rho\right)$.
Notice that $a,b$ are coterminal arrows in $\irc$, and, simultaneously, one-letter words in $\At^+$. 
Suppose that $a\,\thetat\,b$.
By Lemma~\ref{leirtaut}, there exists a 
$\ircs$-derivation
\begin{equation}\label{toprove}
a=\sero wn=b
\end{equation}
from $a$ to $b$.
We intend to prove that $a=b$ which clearly implies the equality $s=t$.
The crucial point in the proof is to describe the special features of the words of $\At^+$ 
appearing in such derivations.
Notice that derivation steps (T3b), (T4b) might introduce $\wedge$-letters which are not $\wedge$-loops. 
Consequently, $w_i\ (0<i<n)$ need not be a path in $\von{\irc}$.
The idea of our description of the words in such derivations is to indicate 
the breaking points of these kinds and their ranges
by pairs of brackets $\jobb{.}$ and $\bal{.}$, respectively.
For example, in the derivation
\[(a\wedge b),\ \ (a\wedge c)(a\wedge b),\ \ (a\wedge c)(d\wedge c)(a\wedge b),\]
where we apply rules (T3b) and (T4b), and
$(a\wedge b)$ is a $\wedge$-loop but $(a\wedge c),(d\wedge c)$ are not,
we indicate the breaking points as follows:
\[(a\wedge b),\ \ \jobb{(a\wedge c)}(a\wedge b),\ \ \jobb{(a\wedge c)\bal{(d\wedge c)}}(a\wedge b).\]

Now we introduce the set of words with brackets needed in our description.
Consider the free monoid
$(\At \cup \left\lbrace \bal{,},\jobb{,} \right\rbrace)^*$
where the empty word is denoted $\veps$,
and let $\Wt $ be its smallest subset which has the following four properties:
\begin{enumerate}[label=(\roman*)]
\item $\veps\in \Wt $;
\item $a\in \Wt $ for all $a\in \At $;
\item $w_1w_2\in \Wt $ for all $w_1,w_2\in \Wt $;
\item $\bal w,\jobb w\in\Wt $ for all $w\in \Wt\setminus \{\veps\}$.
\end{enumerate}
Notice that ${\At }^+\subseteq \Wt $.
In order to distinguish the elements of $\At^+$, called words, from those of $\Wt$, the latter will be called
{\em bracketed words}.
Moreover, the elements of $\At$ will be called {\em $\At$-letters}.
Recall that an $\At$-letter is either a letter of a $\wedge$-letter.
If $w\in \Wt$ then $\Iota w$ [$w\Tau$] denotes the first [last] element of
$\At \cup \left\lbrace \bal{,},\jobb{,} \right\rbrace$
appearing in $w$ (reading $w$ from the left to the right as a word in this alphabet).

For our later convenience, we introduce the notation $w{\downarrow}$ for the subword
of $w\in \Wt $ obtained from $w$ by deleting all brackets.
Clearly, $\veps{\downarrow}=\veps$ and if $w\neq\veps$ then $w{\downarrow}\in \At^+$.

Now we define three subsets $W_n$, $\Wr{n}$ and $\Wl{n}$ of $\Wt $ for every $n\in \mathbb{N}_0$. 
Simultaneously, we attach a (binary) path $\wp(w)\in\Arr{\widetilde{\irc}}^+$ to each element $w$ of these subsets.
If $\wp(w)$ is defined then we use $\whwp(w)$ to denote $\widehat{\wp(w)}$.
For technical reasons, we put $\wp(\veps)=\veps$ but let $\whwp(\veps)$ undefined.

Let $W_0=\Arr{\widetilde{\irc}}^+$, $W_0^\veps = W_0 \cup \{\veps \}$, and 
for any $w\in W_0$, define $\wp(w) = w$. 
Moreover, define
\[ \Wr{0} = \{ p(y \wedge x): p\in W_0^\veps,\ \alpha(y)\neq \omega(x),\ 
\hbox{and}\ \omega(p)=\alpha(y)\ \hbox{if}\ p\neq\veps \}, \]
and for any $w = p(y \wedge x)\in \Wr{0}$, let $\wp(w) = p(y\wedge y')$. 
By assumptions, this, indeed, belongs to $\Arr{\widetilde{\irc}}^+$. 
Similarly, let
\[ \Wl{0} = \{ (x \wedge y)p: p\in W_0^\veps,\ \alpha(x)\neq\omega(y),\
\hbox{and}\ \omega(y)=\alpha(p)\ \hbox{if}\ p\neq\veps \}, \]
and for any $w = (x \wedge y)p\in \Wl{0}$, let $\wp(w) = (y' \wedge y)p$.
Notice that $W_0\cup\Wr{0}\cup\Wl{0}\subseteq {\At }^+$.

Assume that $W_n$ [$\Wr{n},\ \Wl{n}$] is defined for some $n\in \mathbb{N}_0$, and a path 
$\wp(w)\in\Arr{\widetilde{\irc}}^+$ is assigned to each of its elements $w$.
For brevity, denote the set of all idempotent arrows of $\irc$ by $E$.
Define the set 
$W_{n+1}$ [$\Wr{n+1},\ \Wl{n+1}$] to consist of the bracketed words in $W_n$ [$\Wr{n},\ \Wl{n}$] and, additionally, of
all bracketed words $w\in \Wt $ of the form
\begin{equation}\label{generalword0}
w = p_0B_1C_1p_1B_2C_2\cdots B_kC_kp_k \quad (k\in \mathbb{N}),
\end{equation}
where the following conditions are satisfied:
\begin{enumerate}[label=(E\arabic*)]
\setcounter{enumi}{-1}
\item\label{E0}
$\ $
\begin{enumerate}[label=(E0\alph*)]
\item\label{E0a}
$p_1,\dots, p_{k-1}\in W_0$, $p_0\in W_0^\veps$ [$W_0^\veps,\ \Wl{0}$], 
$p_k\in W_0^\veps$ [$\Wr{0},\ W_0^\veps$], and
$\omega(p_{i-1})=\alpha(p_i)$ for every $i$ $(1\le i \le k)$,
\item\label{E0b}
$B_1C_1,\dots, B_kC_k\neq\veps$;
\end{enumerate}

\item\label{E1} 
for any $i$ $(1\le i \le k)$, we have
\begin{enumerate}[label=(E1\alph*)]
\item\label{E1a} 
$B_i = \bal{w_1}\bal{w_2}\cdots \bal{w_s}$, where $s\in \mathbb N_0$ and $w_j \in \Wr{n}$ $(1 \le j \le s)$, and
\item\label{E1b} 
for any $j$ $(1 \le j \le s)$, if $w_j\Tau =(y_j \wedge x_j)$ then
\begin{enumerate}[label=(E1b\roman*)]
\item\label{E1bi} 
$\whwp(w_j)\in E$ and $\widehat{y_j} \irrr \whwp(w_j)$, and
\item\label{E1bii} 
$\widehat{x_j} \irll\whwp(p_{i-1})$ (in particular, $p_0\neq \veps$ if $B_1\neq \veps$);
\end{enumerate}
\end{enumerate}

\item\label{E2} 
for any $i$ $(1\le i \le k)$, we have
\begin{enumerate}[label=(E2\alph*)]
\item\label{E2a} 
$C_i = \jobb{w_1}\jobb{w_2}\cdots \jobb{w_s}$, where $s\in \mathbb N_0$ and $w_j \in \Wl{n}$ $(1 \le j \le s)$, and
\item\label{E2b} 
for any $j$ $(1 \le j \le s)$, if $\Iota w_j =(x_j \wedge y_j)$ then
\begin{enumerate}[label=(E2b\roman*)]
\item\label{E2bi} 
$\whwp(w_j)\in E$ and $\widehat{y_j} \irll\whwp(w_j)$, and
\item\label{E2bii} 
$\widehat{x_j} \irrr \whwp(p_{i})$ (in particular, $p_k\neq \veps$ if $C_k\neq \veps$).
\end{enumerate}
\end{enumerate}
\end{enumerate}
For every $w\in W_{n+1}\setminus W_n$ [$\Wr{n+1}\setminus\Wr{n},\ \Wl{n+1}\setminus\Wl{n}$] of the form 
\eqref{generalword0}, 
define $\wp(w) = \wp(p_0)p_1\cdots p_{k-1}\wp(p_k)$.
Again, $\wp(w)$ is easily seen to belong to $\Arr{\widetilde{\irc}}^+$ by \ref{E0a} and by the definition of $\wp(w)$ for 
$w\in (\Wr{n+1}\setminus \Wr{n}) \cup (\Wl{n+1}\setminus \Wl{n})$.
The less trivial part to check is that $\wp(w)$ is non-empty if $w = p_0B_1C_1p_1\in W_{n+1}\setminus W_n$.
However, since either $B_1$ or $C_1$ is non-empty by \ref{E0b}, we get by \ref{E1b} or \ref{E2b} that $p_0 \neq \veps$ or $p_k\neq \veps$, respectively, whence $\wp(w)\neq \veps$ follows.

Finally, we define 
\[W = \bigcup_{n=0}^{\infty} W_n,\quad \Wr{} = \bigcup_{n=0}^{\infty} \Wr{n}\quad \hbox{and}\quad
\Wl{} = \bigcup_{n=0}^{\infty} \Wl{n}.\]

Alternatively, the bracketed words in $W\cup \Wr{}\cup \Wl{}$ can be characterized as follows.

\begin{Lemma}\label{word-char}
\begin{enumerate}
 \item \label{word-char1}
A bracketed word $w\in\Wt $ belongs to $W$ [\/$\Wr{},\ \Wl{}$] if and only if it is of the form
\begin{equation}\label{generalword}
w = p_0B_1C_1p_1B_2C_2\cdots B_kC_kp_k \quad (k\in \mathbb{N}_0),
\end{equation}
where either $k=0$ and $p_0\neq \veps$, or
the slightly modified versions of \ref{E0}--\ref{E2} are satisfied where $n$ is deleted from `$\Wr{n}$' and `$\Wl{n}$'
in \ref{E1a} and \ref{E2a}, respectively.
Moreover, this form of $w$ is uniquely determined.
 \item
For any bracketed word $w\in W\cup \Wr{}\cup \Wl{}$ of the form \eqref{generalword}, we have
\begin{equation*}%\label{generalpath}
\wp(w) = \wp(p_0)p_1\cdots p_{k-1}\wp(p_k).
\end{equation*}
\end{enumerate}
\end{Lemma}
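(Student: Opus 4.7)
The plan is to prove both parts simultaneously by induction on the construction level, together with a structural parsing argument for the uniqueness assertion in part (1).

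For the forward direction of (1), suppose $w \in W$ and let $n$ be smallest with $w \in W_n$. When $n = 0$, we have $w \in \Arr{\widetilde{\irc}}^+$, which fits the form with $k = 0$ and $p_0 = w \neq \veps$. When $n \geq 1$, either $w \in W_{n-1}$ and the induction hypothesis applies directly, or $w$ is newly introduced at level $n$ by conditions \ref{E0}--\ref{E2}; in the latter case its bracketed constituents $w_j$ lie in $\Wr{n-1} \subseteq \Wr{}$ or $\Wl{n-1} \subseteq \Wl{}$, so the level-free version of \ref{E1a} and \ref{E2a} holds automatically. The arguments for $\Wr{}$ and $\Wl{}$ are analogous. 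For the reverse direction, suppose $w$ has the form \eqref{generalword} meeting the level-free conditions. Each $w_j$ occurring inside some $B_i$ or $C_i$ belongs to $\Wr{n_j}$ or $\Wl{n_j}$ for some finite $n_j$; taking $N$ to be the maximum of these (or $N = 0$ if no such $w_j$ exists), we conclude $w \in W_{N+1} \subseteq W$, and likewise in the $\Wr{}, \Wl{}$ cases.

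Uniqueness rests on a structural parse of $w$. Since every $p_i$ lies in $W_0^\veps \cup \Wr{0} \cup \Wl{0}$ and hence contains no brackets, the $p_i$'s are forced to coincide with the maximal top-level bracket-free segments of $w$. Interior positions $(0 < i < k)$ must be non-empty because $p_i \in W_0$, while the boundary segments $p_0$ and $p_k$ may be $\veps$ only when the adjacent $B_1$ or $C_k$ is itself empty, as dictated by the parenthetical clauses of \ref{E1bii} and \ref{E2bii}. Within each gap between consecutive $p_i$'s, the stipulation that $B_i$ consists solely of $\bal{\cdot}$-brackets and $C_i$ solely of $\jobb{\cdot}$-brackets forces the $B_i$--$C_i$ split at the transition from left to right brackets at top level; ordinary bracket matching then recovers the individual $w_j$'s inside each $B_i$ and $C_i$.

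Part (2) is immediate from the inductive definition of $\wp$: at each level the prescription $\wp(w) = \wp(p_0)p_1 \cdots p_{k-1}\wp(p_k)$ is exactly what the construction assigns, and this value is preserved under the inclusions $W_n \subseteq W_{n+1}$. The main obstacle I anticipate is making the uniqueness argument fully rigorous at the boundary cases --- in particular ensuring that $\veps$-valued $p_0$ or $p_k$ are unambiguously located when several top-level bracket runs abut at the edges of $w$, and that the $B_i$--$C_i$ split cannot be shifted by absorbing or expelling empty tails.
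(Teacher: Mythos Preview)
Your argument is correct; the paper states this lemma without proof, treating it as immediate from the inductive definitions. Your level-based induction for the two directions of (1) and the structural parsing argument for uniqueness are exactly the natural way to spell it out, and your reading of conditions \ref{E0b}, \ref{E1bii}, \ref{E2bii} correctly pins down the boundary behaviour you flag as a potential obstacle: a word in $W$ cannot begin with a $\bal{\cdot}$-bracket nor end with a $\jobb{\cdot}$-bracket, and the interior $p_i$'s are forced to be the non-empty maximal top-level bracket-free segments, so no shifting of the decomposition is possible.
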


\begin{Rmk}
Notice that the description of the bracketed words belonging to $W\cup \Wr{}\cup \Wl{}$ which is formulated in 
Lemma \ref{word-char}(\ref{word-char1}) can be modified by deleting \ref{E0b} from the properties required, but then 
the form obtained is no more uniquely determined.
\end{Rmk}

Later on, when considering a bracketed word from $W\cup \Wr{}\cup \Wl{}$, we always consider it in its form described in
Lemma \ref{word-char}(\ref{word-char1}), but when checking whether a bracketed word belongs to $W\cup \Wr{}\cup \Wl{}$, we 
disregard checking property \ref{E0b}.

Notice that the set $W$ is self-dual in the sense that the reverse of each bracketed word 
of $W$ belongs to $W$. 
E.g., the reverse of the bracketed word $a\jobb{(b\wedge c)}(a'\wedge a)$ is $(a\wedge a')\bal{(c\wedge b)}a$
and vice versa.
Similarly, the sets $\Wr{}$ and $\Wl{}$ are dual to each other.

Besides bracketed words from $W\cup \Wr{}\cup \Wl{}$, we need also certain prefixes and suffixes of them which,
due to properties \ref{E1bii} and \ref{E2bii}, fail to belong to this set. 
Define $\Web$ [$\Wre$] to consist of all non-empty bracketed words $w$ of the form \eqref{generalword} where $p_0=\veps$, $B_1\neq
\veps$, and $w$ satisfies all conditions \ref{E0}--\ref{E2} for $W$ [$\Wr{}$] but \ref{E1bii} in case $i=1$.
Notice that $\wp(w)$ can be also defined for any $w\in \Web$ [$\Wre$] in the same way as it was done for bracketed words
in $W$ [$\Wr{}$], but this time $\wp(w)$ might be empty.
Clearly, we have $\wp(w)=\veps$ if and only if $k=1$ and $p_1=C_1=\veps$.
Dually, we define the set of bracketed words $\Wej$ [$\Wle$].

Given a bracketed word $w\in W\cup \Wr{}\cup \Wl{}$ of the form \eqref{generalword}, the following non-empty sections of $w$
are called {\em $\Wt $-suffixes of $w$ of type \ref{pref3}, \ref{pref1} and \ref{pref2}}, respectively:
\begin{enumerate}[label={\rm (\alph*)}]
\item\label{pref3}
$p_{i2}B_{i+1}C_{i+1}\cdots p_{k-1}B_kC_kp_k$ $(0\le i\le k)$
where $p_{i2}$ is a non-empty suffix of $p_i$,
\item\label{pref1}
$C_{i2}p_i\cdots B_kC_kp_k$ $(1\le i\le k)$
where $C_{i2}=\jobb{w_t}\jobb{w_{t+1}}\cdots\jobb{w_s}$ $(1\le t\le s)$ provided $C_i$ is of the form \ref{E2a}
with $s\neq 0$,
\item\label{pref2}
$B_{i2}C_i\cdots B_kC_kp_k$ $(1\le i\le k)$
where $B_{i2}=\bal{w_t}\bal{w_{t+1}}\cdots\bal{w_s}$ $(1\le t\le s)$ provided $B_i$ is of the form \ref{E1a}
with $s\neq 0$.
\end{enumerate}
It is obvious that a $\Wt $-suffix of $w$ is of the form \ref{pref3}, \ref{pref1} and \ref{pref2} if and only if
its first $\At$-letter belongs to $p_i$, $C_i$ and $B_i$, respectively.
The $\Wt $-prefixes of $w$ of type \ref{pref3}, \ref{pref1} and \ref{pref2} are defined dually.
The following statement is straightforward to check by definition.

\begin{Lemma}\label{cut}
Let $w\in W$ [\/$\Wr{},\ \Wl{}$], and let $v$ be a proper $\Wt $-suffix of $w$.
\begin{enumerate}
\item
If $v$ is of type \ref{pref3} or \ref{pref1} then $v\in W$ [\/$\Wr{},\ W$].
\item
If $v$ is of type \ref{pref2} then $v\in \Web$ [\/$\Wre,\ \Web$].
\end{enumerate}
Moreover, $\wp(v)=\veps$ if and only if $v$ is of the form $\bal{w_1}\bal{w_2}\cdots\bal{w_s}$ for some
$s\in\mathbb{N}$ and $w_1,w_2,\ldots,w_s\in \Wr{}$.
\end{Lemma}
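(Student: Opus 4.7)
The plan is to prove the lemma by case analysis on the starting position of the proper $\Wt$-suffix $v$ in the canonical decomposition $w=p_0B_1C_1p_1\cdots B_kC_kp_k$ guaranteed by Lemma \ref{word-char}(\ref{word-char1}). The first $\At$-letter of $v$ lies within exactly one of the blocks $p_i$, $C_i$, or $B_i$, giving the three suffix types \ref{pref3}, \ref{pref1}, and \ref{pref2}. In each case I would write down an explicit new decomposition of $v$ of the form \eqref{generalword} and verify that the structural conditions \ref{E0}--\ref{E2} (or the slightly relaxed version defining $\Web$ and $\Wre$) are inherited from those of $w$.

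For a type \ref{pref3} suffix, $v=p_{i2}B_{i+1}C_{i+1}p_{i+1}\cdots B_kC_kp_k$, I would reorganize with $p_0':=p_{i2}$, $B_j':=B_{i+j}$, $C_j':=C_{i+j}$, $p_j':=p_{i+j}$. A non-empty suffix of a $W_0$-element stays in $W_0$, and a non-empty suffix of a $\Wr{0}$-element (which must contain its terminal $\wedge$-letter) remains in $\Wr{0}$, so $p_0'$ has the right type in the $W$ and $\Wr{}$ cases. In the $\Wl{}$ case, the $\Wl{0}$-structure at the front of $p_0$ is destroyed once we strictly pass its initial $\wedge$-letter, so $p_0'\in W_0$ and $v\in W$, matching the statement. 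For a type \ref{pref1} suffix, $v=C_{i2}p_i\cdots p_k$, the natural decomposition has $p_0'=\veps$, $B_1'=\veps$, $C_1'=C_{i2}$, $p_1'=p_i$, with shifted blocks afterwards; condition \ref{E2bii} for $C_1'$ asserts $\widehat{x_j}\irrr\whwp(p_i)$, which is exactly the original condition for $C_i$. For a type \ref{pref2} suffix, $v=B_{i2}C_ip_i\cdots p_k$ with $B_{i2}\neq\veps$, the new decomposition has $p_0'=\veps$ and $B_1'=B_{i2}$; here \ref{E1bii} for $B_1'$ would reference the undefined $\whwp(\veps)$, and this condition is precisely what is dropped in the definitions of $\Web$ and $\Wre$, so $v$ lies in these classes.

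For the \emph{moreover} part, by Lemma \ref{word-char}(2) we have $\wp(v)=\wp(p_0')p_1'\cdots p_{k'-1}'\wp(p_{k'}')$. Since each middle factor $p_j'\in W_0$ is a non-empty path, $\wp(v)=\veps$ forces $k'\le 1$. For types \ref{pref3} and \ref{pref1}, non-emptiness of $p_0'$, or of $p_1'=p_i$ (forced by $C_{i2}\neq\veps$ and \ref{E2bii}), excludes this. Hence $\wp(v)=\veps$ can only arise for a type \ref{pref2} suffix with $k'=1$ and $p_1'=\veps$; by \ref{E2bii}, $p_1'=\veps$ forces $C_1'=\veps$, so $v=B_1'=B_{i2}$ is indeed of the claimed form $\bal{w_1}\cdots\bal{w_s}$ with entries in $\Wr{}$. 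The converse is immediate, since any such $v$ belongs to $\Web$ with $p_0'=p_1'=C_1'=\veps$.

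The main obstacle is the type \ref{pref3} bookkeeping: when $p_i$ is truncated to $p_{i2}$, condition \ref{E1bii} for the first $B$-block refers to $\whwp(p_0')=\widehat{p_{i2}}$ rather than the original $\whwp(p_i)=\widehat{p_i}$, so one must show that $\widehat{x_j}\irll\widehat{p_i}$ implies $\widehat{x_j}\irll\widehat{p_{i2}}$. Writing $p_i=q\cdot p_{i2}$ gives $\widehat{p_i}=\widehat{q}\circ\widehat{p_{i2}}$ by Lemma \ref{komp}(\ref{komp2}); then the invariance of the $\irll$-class of a stable arrow under left-composition by another stable arrow (a direct consequence of Lemma \ref{ujall} together with the locally inverse structure of the hom-sets of $\widehat{\irc}$) transfers the required condition. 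All other conditions for types \ref{pref1} and \ref{pref2} are direct rewrites, completing the verification.
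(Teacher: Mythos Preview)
Your proposal is correct, and since the paper itself offers no proof of this lemma---it is stated as ``straightforward to check by definition''---your detailed case analysis is precisely the kind of elaboration the paper leaves to the reader. The decomposition you write down in each of the three suffix types is the natural one, and your handling of the ``moreover'' clause via the constraints \ref{E1bii} and \ref{E2bii} forcing $p_0'\neq\veps$ or $p_1'\neq\veps$ in types \ref{pref3} and \ref{pref1} is clean.

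One small remark on the step you flag as the main obstacle. You need $\widehat{p_i}\,\irl\,\widehat{p_{i2}}$ when $p_i=q\cdot p_{i2}$, and you attribute this to ``invariance of the $\irl$-class of a stable arrow under left-composition by another stable arrow, a direct consequence of Lemma~\ref{ujall} together with the locally inverse structure of the hom-sets of $\widehat{\irc}$.'' This is correct, but the cleanest justification is simply the left--right dual of Lemma~\ref{ujall}(\ref{ujall2}): if $b$ is stable then $a\circ b$ is stable and $b\,\irl\,a\circ b$. The proof of that dual goes through verbatim by swapping the roles of $s$ and $t$ in the paper's argument (using $(tt')\rho=\beta^{-1}\beta\le(s's)\rho$ and that $\beta^{-1}\beta$ is completely simple). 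The locally inverse structure of the hom-sets is not really what drives this; it is the completely simple structure of the idempotent $\rho$-classes, exactly as in the paper's proof of Lemma~\ref{ujall}(\ref{ujall2}). With that clarification, your argument is complete.
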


The first two statements of the next lemma directly follow from the previous lemma.

\begin{Lemma}\label{deformedlemma-bracketed}
\begin{enumerate}[label=(\arabic*)]
\item \label{lai}
If $w\in \Wl{}$ then $\Iota w$ 
is a $\wedge$-letter which is not a $\wedge$-loop.
Moreover, if $w\not=\Iota w$ then $w$ has a proper $\Wt $-suffix which is either a path, 
or of the form $\bal{u}$ for some $u\in \Wr{}$.
The latter case occurs if and only if the last $\At$-letter of $w$
is a $\wedge$-letter which is not a $\wedge$-loop.
\item \label{bracketed}
If $w\in W$ then $w$ has a $\Wt $-suffix [\/$\Wt $-prefix] which is either a path, 
or of the form $\bal{u}$ [\/$\jobb{u}$] for some $u\in \Wr{}$ [\/$u\in \Wl{}$\/].
The latter case occurs if and only if the last [first] $\At$-letter of $w$
is a $\wedge$-letter which is not a $\wedge$-loop.
\item \label{laii} 
Let $\jobb{w}$ be a factor of a bracketed word in $W\cup \Wr{}\cup \Wl{}$ where
$w\in \Wl{}$ such that $w\neq\Iota w=(x\wedge y)$ $(x,y\in \overline{A})$, and the last $\At$-letter of $w$ is
$(b\wedge a)$ $(a,b\in \overline{A})$.
Then, independently of whether $\alpha(b)=\omega(a)$ or not,
we have $\widehat{a}\irll \widehat{y}\irll \whwp(w)\in E$ and $\omega(a)=\omega(y)=\alpha(\wp(w))=\omega(\wp(w))$.
Moreover, if $v$ is the $\Wt $-suffix of $w$ obtained from $w$ by deleting $\Iota w$, then
we have $v\in W\cup \Web$, and if $\wp(v)\neq \veps$ then 
$\widehat{y}\irll \whwp(v)\in E$ and $\omega(y)=\alpha(\wp(v))=\omega(\wp(v))$.
\end{enumerate}
\end{Lemma}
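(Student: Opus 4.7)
The plan is to handle the three assertions in turn, with the substantive work in (3).

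\emph{Parts (1) and (2):} These follow from the definitions of $\Wl{}$, $\Wr{}$, $W$ combined with Lemma~\ref{cut}. For (1), $p_0\in\Wl{0}$ forces $\Iota w=(x\wedge y)$ with $\alpha(x)\neq\omega(y)$, a $\wedge$-letter that is not a $\wedge$-loop. If $w\neq\Iota w$, the character of the last $\At$-letter of $w$ determines the proper $\Wt$-suffix to exhibit: when it is a letter or a $\wedge$-loop, it lies inside the final non-empty $p_i$-segment, and the maximal path ending $w$ is a $\Wt$-suffix in $\widetilde{\irc}^+$; when it is a non-$\wedge$-loop $\wedge$-letter, the definition of $\Wr{0}$ forces it to be $u_s\Tau$ for some $u_s\in\Wr{}$ inside the final $B_k$-block, and $\bal{u_s}$ is a $\Wt$-suffix of the required form. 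Statement (2) is identical, with prefixes handled dually.

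\emph{First assertion of (3):} Since $\jobb{w}$ is a factor of a bracketed word in $W\cup\Wr{}\cup\Wl{}$, $w$ appears as some $w_j$ in a $C_i$-block, so (E2bi) yields $\whwp(w)\in E$ and $\widehat{y}\irll\whwp(w)$. Because $\wp(w)$ begins with the $\wedge$-loop $(y'\wedge y)$, $\alpha(\wp(w))=\omega(y)$, and idempotency forces $\omega(\wp(w))=\omega(y)$. For $\omega(a)=\omega(y)$ and $\widehat{a}\irll\widehat{y}$, I analyze where $(b\wedge a)$ sits in $w=p_0B_1C_1p_1\cdots B_kC_kp_k$: either inside some $p_i$-segment (forcing $(b\wedge a)$ to be a $\wedge$-loop) or as $u_s\Tau$ inside $B_k$ with $p_k=C_k=\veps$; the case of $(b\wedge a)$ lying in $C_k$ is ruled out because it would force $p_k=\veps$ while $C_k\neq\veps$, violating the supplementary clause of (E2bii). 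In the first case, Lemma~\ref{ujall}(\ref{ujall3}) gives $\widehat{a}\irll\widehat{(b\wedge a)}$, and the factorization $\whwp(w)=\rho\circ\widehat{(b\wedge a)}$ combined with the observation below yields $\whwp(w)\irll\widehat{(b\wedge a)}$, whence $\widehat{a}\irll\widehat{y}$ by transitivity. In the second case, (E1bii) applied to $u_s$ gives $\widehat{a}=\widehat{x_s}\irll\whwp(p_{k-1})$, and the factorization $\whwp(w)=R\circ\widehat{p_{k-1}}$ together with the same observation yields $\whwp(w)\irll\widehat{p_{k-1}}$ (when $k=1$ the factorization collapses to $\whwp(w)=\whwp(p_0)=\whwp(p_{k-1})$, making the $\irll$-relation immediate). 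The equality $\omega(a)=\omega(y)$ is read off from the relevant $\irll$-relationship in each case. The \emph{observation} used is that in a regular semigroupoid whose local semigroups are completely simple, $X\circ Y\irll Y$ whenever $XY$ is defined: picking $(XY)^*\in V(XY)$ and $Y^*\in V(Y)$, one computes $((XY)^*XY)\cdot(Y^*Y)=(XY)^*XY$ via $YY^*Y=Y$, so the idempotents $(XY)^*XY$ and $Y^*Y$ at $\omega(Y)$ satisfy $ef=e$, which in a completely simple local semigroup is equivalent to $e\irll f$; then $XY\irll(XY)^*XY\irll Y^*Y\irll Y$.

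\emph{Moreover part:} Lemma~\ref{cut} places the suffix $v$ of $w$ obtained by deleting $\Iota w$ into $W$ or $\Web$ according to whether the segment immediately following $\Iota w$ begins in a $p$-part or a $C$-block (then $v\in W$) or in a $B$-block (then $v\in\Web$). Assuming $\wp(v)\neq\veps$, observe that $\wp(w)=(y'\wedge y)\cdot\wp(v)$, so $\whwp(w)=e\circ\whwp(v)$ where $e:=\widehat{(y'\wedge y)}=\widehat{y'}\cw\widehat{y}$ is an idempotent loop at $\omega(y)$. By Lemma~\ref{ujall}(\ref{ujall3}) we have $\widehat{y}\irll e$, and combined with (E2bi) this yields $e\irll\whwp(w)$ in the completely simple local semigroup at $\omega(y)$. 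A short Rees-matrix calculation now shows: if $e$ is idempotent, $e\sigma$ is idempotent, and $e\irll e\sigma$ in a completely simple semigroup, then $\sigma$ itself is idempotent and $\sigma\irll e$. Applied with $\sigma=\whwp(v)$, this gives $\whwp(v)\in E$ and $\whwp(v)\irll e\irll\widehat{y}$; the endpoint equalities $\omega(y)=\alpha(\wp(v))=\omega(\wp(v))$ follow from the form of $\wp(w)$ and the idempotency of $\whwp(w)$. The main obstacle I anticipate is the $B_k$-subcase in (3): here no idempotent $\wedge$-loop sits at the end of $w$, so the bracketed-word condition (E1bii) for $u_s$ must be propagated to an $\irll$-relation between $\widehat{a}$ and $\widehat{y}$ purely at the semigroupoid level, relying essentially on the completely simple structure of each local semigroup $\widehat{\irc}(\gamma,\gamma)$; careful bookkeeping of endpoints in the boundary cases $k=0$, $k=1$, and $p_k=C_k=\veps$ is also required throughout.
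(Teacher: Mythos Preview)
Your proof is correct and follows essentially the same route as the paper's: parts (1) and (2) are dismissed via Lemma~\ref{cut}, and part (3) is handled by the same two-case split on whether $(b\wedge a)$ is a $\wedge$-loop (hence $(b\wedge a)=p_k\Tau$) or not (hence it is $w_s\Tau$ for the last factor $\bal{w_s}$ of $B_k$, with $p_k=C_k=\veps$), invoking (E2bi) for $\whwp(w)\in E$ and $\widehat y\,\irl\,\whwp(w)$, and (E1bii) in the second case.

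The only substantive difference is one of explicitness. Where the paper simply asserts $\whwp(w)\,\irl\,\widehat{(b\wedge a)}$ and $\whwp(p_{k-1})\,\irl\,\whwp(w)$ (tacitly using that in $\widehat\irc$ every composite is $\irl$-related to its rightmost factor, i.e., the dual of Lemma~\ref{ujall}(\ref{ujall2})), you reprove this as your ``observation''. Likewise, in the ``moreover'' clause the paper writes that all three of $\widehat{(y'\wedge y)},\whwp(w),\whwp(v)$ lie in a single completely simple subsemigroup and deduces $\whwp(v)\in E$, $\whwp(v)\,\irl\,\widehat y$ from that; your Rees-matrix calculation spells out exactly the same deduction. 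Two cosmetic remarks: your placeholder names $\rho$ and $R$ for the left cofactors clash with the paper's congruence $\rho$ and Green's relation $\irr$, so rename them; and your phrase ``inside some $p_i$-segment'' should read $p_k$, since $(b\wedge a)$ is the last $\At$-letter of $w$.
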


\begin{proof}
\ref{laii}
Assume that $w$ is of the form \eqref{generalword}.
If $\alpha(b)=\omega(a)$ then $(b\wedge a)=p_k\Tau$ and $p_k\neq \veps$.
Hence $\whwp(w)\irll \widehat{(b\wedge a)}\irll \widehat{a}$ follows.
Applying property \ref{E2bi}, we see that $\whwp(w)\in E$ and $\whwp(w)\irll \widehat{y}$ whence
$\widehat{a}\irll \widehat{y}\irll \whwp(w)\in E$ follows.
If $\alpha(b)\neq \omega(a)$ then the last factor in the form \ref{E1a} of $B_k$ is
$\bal{u}$ for some $u\in\Wr{}$ with $u\Tau=(b\wedge a)$.
This implies by \ref{E0a} that $p_{k-1}\neq\veps$ and $\wp(w)=\wp(p_0)\cdots \wp(p_{k-1})$, and so
$\whwp(p_{k-1})\irll \whwp(w)$ follows.
By property \ref{E1bii} of $u$ we deduce that $\widehat{a}\irll \whwp(p_{k-1})$, and
by property \ref{E2bi} of $w$ that $\whwp(w)\irll \widehat{y}$.
Thus we again obtain that $\widehat{a}\irll \widehat{y}\irll \whwp(w)\in E$.
In both subcases, this relation implies $\omega(a)=\omega(y)=\alpha(\wp(w))=\omega(\wp(w))$.

Turning to the second statement, first notice that Lemma \ref{cut} implies $v\in W\cup \Web$.
By definition, we have $\whwp(w)=\widehat{(y'\wedge y)}\whwp(v)$ where all three elements belong to a completely
simple subsemigroup of $S$.
This implies $\whwp(w)\irll\whwp(v)$.
Furthermore, we have seen in the first part of the proof that $\whwp(w)\in E$ and $\whwp(w)\irll \widehat{y}$.
Since $\widehat{(y'\wedge y)}\in E$ and $\widehat{(y'\wedge y)}\irll \widehat{y}$ also holds,
we deduce that $\whwp(v)\in E$ and $\whwp(v)\irll \widehat{y}$.
These relations imply $\omega(y)=\alpha(\wp(v))=\omega(\wp(v))$.
\end{proof}

An easy consequence of this lemma is that the subsets $W$, $\Wr{}$ and $\Wl{}$ of $\Wt $ are almost pairwise disjoint.

\begin{Cor}\label{e-r-l disjoint}
For the subsets $W$, $\Wr{}$, and $\Wl{}$ of $\Wt $, we have 
$W\cap(\Wr{}\cup \Wl{})=\emptyset$,
and $\Wr{}\cap \Wl{}$ is the set of all $\wedge$-letters which are not $\wedge$-loops. 
\end{Cor}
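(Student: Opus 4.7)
The plan is to exploit the uniqueness of the decomposition \eqref{generalword} of every bracketed word in $W\cup\Wr{}\cup\Wl{}$ (Lemma~\ref{word-char}(\ref{word-char1})), together with the different constraints that the three sets place on the boundary pieces $p_0$ and $p_k$: for $w\in W$ one has $p_0,p_k\in W_0^\veps$; for $w\in\Wr{}$ one has $p_0\in W_0^\veps$ (with $p_0\in W_0$ when $k=0$) and $p_k\in \Wr{0}$; for $w\in\Wl{}$ one has $p_0\in \Wl{0}$ and $p_k\in W_0^\veps$. Because the top-level bracket pattern of $w$ determines the decomposition intrinsically, memberships in two sets impose both constraint systems on the same pieces $p_0,\dots,p_k$.

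The driving elementary fact is the disjointness $W_0\cap\Wr{0}=W_0\cap\Wl{0}=\emptyset$: elements of $W_0=\Arr{\widetilde{\irc}}^+$ are paths in $\widetilde{\irc}$, in which every $\wedge$-letter is a $\wedge$-loop, whereas each element of $\Wr{0}$ [\/$\Wl{0}$\/] ends [\/starts\/] with a $\wedge$-letter that is not a $\wedge$-loop.

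Granted this, $W\cap\Wl{}=\emptyset$ is immediate: any $w$ in the intersection would force $p_0\in W_0^\veps\cap\Wl{0}$, but $\Wl{0}$ is disjoint from both $\{\veps\}$ and $W_0$. The same argument applied at the right end via $p_k$ gives $W\cap\Wr{}=\emptyset$. For $\Wr{}\cap\Wl{}$, the identical argument applied to $p_0$ excludes the case $k\ge 1$, so $k=0$ and $w=p_0\in\Wr{0}\cap\Wl{0}$. A short case analysis then shows $\Wr{0}\cap\Wl{0}$ is exactly the set of non-loop $\wedge$-letters: writing $w=p(y\wedge x)=(x'\wedge y')p'$ with $p,p'\in W_0^\veps$ and both displayed $\wedge$-letters non-loops, a non-empty $p$ or $p'$ would embed a non-loop $\wedge$-letter inside a path, contradicting $p,p'\in W_0^\veps$; hence $p=p'=\veps$ and $w$ is a single non-loop $\wedge$-letter. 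The reverse containment is immediate since every non-loop $\wedge$-letter lies in both $\Wr{0}$ and $\Wl{0}$ via the choice $p=\veps$.

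There is no genuine obstacle: the whole argument is bookkeeping, driven by the disjointness of the base sets $W_0$, $\Wr{0}$, $\Wl{0}$ and by the uniqueness, across all three sets, of the bracket-based decomposition of $w$.
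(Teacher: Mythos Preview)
Your argument is correct. The paper gives no explicit proof, merely flagging the corollary as an easy consequence of Lemma~\ref{deformedlemma-bracketed}; the intended route is presumably to read off from part~\ref{lai} (and its dual) that $\Iota w$ is a non-loop $\wedge$-letter for $w\in\Wl{}$ and $w\Tau$ is one for $w\in\Wr{}$, while for $w\in W$ the first and last symbols are either bracket characters or $\At$-letters lying on a path. You bypass Lemma~\ref{deformedlemma-bracketed} and work directly from Lemma~\ref{word-char}, using that the top-level bracket pattern determines the decomposition \eqref{generalword} intrinsically, so that the constraints on $p_0$ and $p_k$ from two different memberships apply to the same pieces. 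This is slightly more direct and equally valid; the disjointness $W_0\cap\Wr{0}=W_0\cap\Wl{0}=\emptyset$ and the identification of $\Wr{0}\cap\Wl{0}$ with the non-loop $\wedge$-letters that you spell out are exactly what drives either route.
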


Let $w\in W\cup\Wr{}\cup\Wl{}$. 
We see by definition that if $\tilde u$ is any non-empty bracketed subword of $w$ then
two possibilities occur:
either $\tilde u$ is inside a pair of brackets $\bal{,}$ or $\jobb{,}$, 
or it is not.
In the first case, there exists a shortest section $v$ of $w$ 
such that $v$ contains $\tilde u$, and $v$ is either of the form $\bal{u}$ for some $u\in \Wr{}$ or of the form $\jobb{u}$ for some $u\in \Wl{}$.
We denote $u$ and $v$ by
$\sbs_w(\tilde u)$ and
$\sbbr_w(\tilde u)$, respectively.
In the second case, 
$\sbs_w(\tilde u)$ is defined to be $w$ and
$\sbbr_w(\tilde u)$ is undefined.

Now we are ready to return to proving the equality $a=b$ provided a $\ircs$-derivation (\ref{toprove}) is given 
from $a$ to $b$ where $a,b$ are coterminal arrows in $\irc$.
It suffices to show that, whenever $w,w^\wr\in {\widetilde A}^+$ such $w^\wr$ is obtained from $w$ by one of 
the derivation steps, 
and $\underline{w}\in W$ 
such that $w=\underline{w}{{\downarrow}}$, then
there exists a bracketed word $\underline{w^\wr}\in W$ such that $w^\wr=\underline{w^\wr}{\downarrow}$ and 
$\whwp(\underline{w})=\whwp(\underline{w^\wr})$.
For, if this holds, then we can choose $\underline{w_0}$ to be $a$, and we obtain $\underline{w_{i+1}}$ for $i=0,1,\ldots,n-1$ by induction such that $\whwp(\underline{w_i})=\whwp(\underline{w_{i+1}})$.
This implies $a=\wp(a)=\whwp(\underline{w_0})=\whwp(\underline{w_1})=\cdots
=\whwp(\underline{w_{n-1}})=\whwp(\underline{w_n})=\wp(b)=b$,
since $\underline{w_n}=\underline{w_n}{\downarrow}=w_n=b$.

In the rest of the section we verify the above statement for any derivation step.
In each subcase considered, the general scheme of the argument is as follows.
We consider $u=\sbs_{\underline{w}}(\tilde u)$ and $v=\sbbr_{\underline{w}}(\tilde u)$ 
for a bracketed subword $\tilde u$ of $\underline{w}$ 
such that $u{\downarrow}$ contains the section of $w$ involved in the derivation step, and define
$u^\wr\in \Wt $ such that the following conditions are satisfied:
\begin{itemize}
\item[(Q1)]
$u^\wr{\downarrow}$ is just the term obtained from $u{\downarrow}$ by the derivation step considered,
\item[(Q2)]
$u^\wr$ is of the form (\ref{generalword}), and if
$\Iota u=(x\wedge y)$ [$u\Tau=(x\wedge y)$] such that
$\alpha(x)\neq \omega(y)$, then
$\Iota u^\wr=(x^\wr\wedge y^\wr)$ [$u^\wr\Tau=(x^\wr\wedge y^\wr)$]
such that $\widehat{x^\wr}\irrr \widehat{x}$ and $\widehat{y^\wr}\irll \widehat{y}$,
\item[(Q3)]
$u^\wr$  has property \ref{E0a}, and we have 
$\whwp(u^\wr)=\whwp(u)$ [$\whwp(u^\wr)\irll\whwp(u)$, $\whwp(u^\wr)\irrr\whwp(u)$]
provided $\underline{w}=u$ [$v=\jobb{u}$, $v=\bal{u}$].
\item[(Q4)]
$u^\wr$ has properties \ref{E1}--\ref{E2}.
\end{itemize}
Notice that relations $\widehat{x^\wr}\irrr \widehat{x}$ and $\widehat{y^\wr}\irll \widehat{y}$ imply
$\alpha(x^\wr)=\alpha(x)$ and $\omega(y^\wr)=\omega(y)$.
Thus, by Corollary \ref{e-r-l disjoint}, (Q2)--(Q4) imply that $u^\wr\in W$, $u^\wr\in \Wr{}$ and $u^\wr\in \Wl{}$ if and only if $\underline{w}=u$, $v=\bal{u}$ and $v=\jobb{u}$, respectively.
Define $\underline{w^\wr}$ to be the bracketed word obtained from $\underline{w}$ by replacing the section $u$ by $u^\wr$.
To justify our approach, we have to verify that properties (Q1)--(Q4) imply $\underline{w^\wr}\in W$, $w^\wr=\underline{w^\wr}{\downarrow}$ and 
$\whwp(\underline{w})=\whwp(\underline{w^\wr})$.

Clearly, we have $\underline{w^\wr}=u^\wr$ if and only if $\underline{w}=u$,
and we have $\sbbr_{\underline{w^\wr}}(u^\wr)=\bal{u^\wr}$\,[$\jobb{u^\wr}$]
if and only if $v=\bal{u}$\,[$\jobb{u}$].
Moreover, property (Q2) implies that the factor $\bal{u^\wr}$ [$\jobb{u^\wr}$] of 
$\sbs_{\underline{w^\wr}}(\bal{u^\wr})$ [$\sbs_{\underline{w^\wr}}(\jobb{u^\wr})$] 
satisfies condition \ref{E1bii} [\ref{E2bii}],
since $\underline{w}\in W$, and so the factor $\bal{u}$ [$\jobb{u}$] of 
$\sbs_{\underline{w}}(\bal{u})$ [$\sbs_{\underline{w}}(\jobb{u})$] 
has property \ref{E1bii} [\ref{E2bii}].
All the details of properties \ref{E0}--\ref{E2} of $\underline{w^\wr}$ not checked in (Q2)--(Q4) are obviously 
inherited from those of $\underline{w}$.
This shows that $\underline{w^\wr}\in W$.
The equality $w^\wr=\underline{w^\wr}{\downarrow}$ is clear by (Q1) and by the definition of $\underline{w^\wr}$.
The equality $\whwp(\underline{w})=\whwp(\underline{w^\wr})$ is implied. 
For, if $\underline{w}\neq u$ then $\wp(\underline{w})$ is not affected by the changes done in $u$ to obtain $u^\wr$, and so $\wp(\underline{w^\wr})=\wp(\underline{w})$.
If $\underline{w}=u$ then we also have $u^\wr= \underline{w^\wr}$, and the equality follows from (Q3).

Note that, throughout the next proofs, (Q1) and (Q2) will be clear from the definition of $u^\wr$, and in a number of cases,
the same holds for (Q3).
Furthermore, most of the properties to be checked in (Q4) are inherited from the respective properties of $u$ and $\underline{w}$, or they are obvious by definition.
For example, (Q3) is clear if $\wp(u^\wr)=\wp(u)$, or condition \ref{E1bi} is trivially satisfied in case $w_j=w_j\Tau$.
It is also obvious that if $u$ is of the form \eqref{generalword} and $u^\wr$ is obtained from $u$ by deleting a factor 
$\bal{w_j}$ [$\jobb{w_j}$] (see \ref{E1a} [\ref{E2a}]) from $u$ then (Q2)--(Q4) are valid. 
In the proofs of the following propositions we concentrate on the properties being less trivial than these.

\begin{Prop}\label{stepS1}
Suppose that $w,w^\wr\in {\widetilde A}^+$ and we get $w^\wr$ from $w$ by a derivation step of one of the types
{\rm (S$j$a)}, {\rm (S$j$b)} for $j=1,21,22$ and {\rm (T5a)}, {\rm (T5b)}.
If $\underline{w}\in W$ such that $w=\underline{w}{\downarrow}$ then there exists 
$\underline{w^\wr}\in W$ such that $w^\wr=\underline{w^\wr}{\downarrow}$ and 
$\whwp(\underline{w})=\whwp(\underline{w^\wr})$.
\end{Prop}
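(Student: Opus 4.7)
The plan is case analysis over the eight admissible derivation step types. For each step I would select a bracketed subword $\tilde u$ of $\underline w$ whose flattening contains the section of $w$ modified by the step, set $u=\sbs_{\underline w}(\tilde u)$ and $v=\sbbr_{\underline w}(\tilde u)$, and define $u^\wr$ by performing the same local rewriting inside $u$. Checking conditions (Q1)--(Q4) then suffices, because the general plug-in scheme in the preamble automatically produces the desired $\underline{w^\wr}\in W$ with $w^\wr=\underline{w^\wr}{\downarrow}$ and $\whwp(\underline w)=\whwp(\underline{w^\wr})$.

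Two tools drive the $\whwp$-invariance (Q3). First, $\widehat{}\colon\irc\to\widehat\irc$ is a morphism of binary semigroupoids by Lemma~\ref{komp}, so rewrites descend from $\irc$ to $\widehat\irc$ unchanged. Second, each local semigroup $\widehat\irc(\gamma,\gamma)$ is completely simple, which has two consequences: the natural order on its idempotents is trivial, and for any two $\irr$-related elements $a,c$ one can choose inverses $a^*,c^*$ with $aa^*=cc^*$, so that the sandwich value $s\cw t$ depends only on the $\irr$-class of $s$ (dually on the $\irl$-class of $t$). The cases then proceed as follows. For (S21a)/(S21b), condition \ref{E0b} forces the adjacent $A$-letters $a,b$ to lie inside a common path factor; substituting $ab\leftrightarrow c$ and using Lemma~\ref{komp}(2) yields $\widehat a\cdot\widehat b=\widehat c$, preserving $\whwp$. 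For (S22a)/(S22b), the affected wedge-letter is either a $\wedge$-loop inside a path factor or a non-$\wedge$-loop at a bracket boundary; Lemma~\ref{ujall}(2) gives $\widehat c\irrr\widehat a$, so by the above $\widehat a\cw\widehat y=\widehat c\cw\widehat y$, and the dual (via Lemma~\ref{ujall}(3)) handles $(y\wedge b)\leftrightarrow (y\wedge c)$. For (T5a)/(T5b), the section $a'a$ lies inside a path factor, and in $\widehat\irc$ one has $\widehat{a'}\cw\widehat a=\widehat a^\dagger\cw\widehat a\in S(\widehat a^\dagger\widehat a,\widehat a^\dagger\widehat a)=\{\widehat a^\dagger\widehat a\}$, which matches the value $\widehat{a'}\widehat a=\widehat a^\dagger\widehat a$ of the original section. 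Conditions \ref{E0a} and \ref{E1}--\ref{E2} for $\underline{w^\wr}$ are inherited from those for $\underline w$, since the endpoints of the arrows are preserved and the $\irr$-/$\irl$-class conditions in \ref{E1b}--\ref{E2b} carry over via Lemma~\ref{ujall}.

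The main obstacle I expect is the (S1a)/(S1b) case, where the rewrite is between the formal inverse symbol $a'$ and the chosen inverse $a^\dagger$: invariance of $\whwp$ reduces to the identity $\widehat{a^\dagger}=(\widehat a)^\dagger$ in $\widehat\irc$. Both sides are stable arrows in $\irc(\omega(a),\alpha(a))$ that are inverses of $\widehat a$ (the right side by definition of $^\dagger$ on $\irc$, the left by applying Lemma~\ref{komp}(2) to $aa^\dagger a=a$ and $a^\dagger aa^\dagger=a^\dagger$). I would establish their equality by exploiting that the factor semigroup $K$ is, up to isomorphism, independent of the chosen inverse unary operation: using Result~\ref{li-basic} one can refine $^\dagger$ on $S$ so that $t^\dagger\le s^\dagger$ whenever $t=\widehat s$, and then the uniqueness of stable elements below a given arrow (Lemma~\ref{tomorit}) forces $\widehat{a^\dagger}=(\widehat a)^\dagger$. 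This compatibility step is where essentially all the nontrivial work of the proposition sits; the remaining cases are direct consequences of Lemmas~\ref{komp} and~\ref{ujall} together with complete simplicity of the local semigroups of $\widehat\irc$.
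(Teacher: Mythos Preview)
Your plan coincides with the paper's: both use the (Q1)--(Q4) plug-in scheme and locate the innermost bracketed factor $u=\sbs_{\underline w}(\tilde u)$ containing the modified section. For (S22) you argue via $\widehat a\,\irr\,\widehat c$ from Lemma~\ref{ujall}(\ref{ujall2}), exactly as the paper does; for (S1), (S21) and (T5) the paper does not separate cases but simply records that the old and new sections $p,q$ are coterminal paths in $\widetilde\irc^+$ with $\widehat p=\widehat q$, after which (Q3)--(Q4) are routine.

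You are right to isolate (S1) as the delicate point: the equality $(\widehat a)^\dagger=\widehat{a^\dagger}$ it requires need \emph{not} hold for an arbitrary inverse unary operation~$^\dagger$. For a concrete failure, take $S$ to be the strong semilattice of the trivial group $\{1\}$ over a $2\times2$ rectangular band with structure map $1\mapsto(1,1)$, let $\rho$ collapse each component, and set $(1,1)^\dagger=(2,2)$; then for $a=(0,1,0)$ one gets $(\widehat a)^\dagger=(0,(2,2),0)\ne(0,(1,1),0)=\widehat{a^\dagger}$. So the paper's blanket assertion $\widehat p=\widehat q$ genuinely needs an argument here, and you have spotted this. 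Your proposed fix --- invoke the $^\dagger$-independence of $K$ and choose $^\dagger$ monotone for the natural partial order of $S$ --- is a sound strategy in principle, but you have not established that a monotone inverse unary operation exists on every $E$-solid locally inverse semigroup, and your phrase ``$t=\widehat s$'' does not type-check (the hat is defined on arrows of $\irc$, not on elements of $S$). A more direct repair, staying inside the paper's framework, is to change the definition of the graph morphism $\delta$ on primed letters to $a'\delta:=\widehat{a^\dagger}$ rather than $(\widehat a)^\dagger$; this makes (S1) immediate, and nothing in Lemma~\ref{deformedlemma-bracketed} or Propositions~\ref{stepTb}--\ref{stepTa} is disturbed, since those arguments only use that $\widehat{y'}$ is \emph{some} inverse of $\widehat y$ in $\widehat\irc$, which $\widehat{a^\dagger}$ certainly is by Lemma~\ref{komp}(\ref{komp2}) applied to $a\circ a^\dagger\circ a=a$.
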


\begin{proof}
First we consider the case of derivation steps (S22a) and (S22b).
By symmetry, we can assume that $w^\wr$ is obtained from $w$ by replacing 
either an occurrence of a $\wedge$-letter $(a\wedge y)$ by $(c\wedge y)$,
or an occurrence of a $\wedge$-letter $(c\wedge y)$ by $(a\wedge y)$,
where $y\in\Av$ and $a,c\in A$ such that $a\circ b=c$ for some $b\in A$.
By Lemma \ref{ujall}(\ref{ujall2}), this equality implies $\widehat{a}\irrr \widehat{c}$, 
and so $\al a=\al c$ follows.
Hence $(a\wedge y)$ is a $\wedge$-loop if and only if $(c\wedge y)$ is, and in this case, 
$\widehat{(a\wedge y)}=\widehat{(c\wedge y)}$.
If $(a\wedge y)$ is replaced by $(c\wedge y)$ then
put $u=\sbs_{\underline{w}}((a\wedge y))$, and consider its form \eqref{generalword}.
Define $u^\wr$ to be the bracketed word obtained from $u$
by replacing $(a\wedge y)$ by $(c\wedge y)$.
We see that $(a\wedge y)$ belongs to a section of $p_i$ for some $i$ $(0\le i\le k)$, 
and $(a\wedge y)$ is not a $\wedge$-loop if and only if either 
$i=0$, $(a\wedge y)=\Iota p_1=\Iota u$ and $\sbbr_{\underline{w}}((a\wedge y))=\jobb{u}$, or
$i=k$, $(a\wedge y)=p_k\Tau=u\Tau$ and $\sbbr_{\underline{w}}((a\wedge y))=\bal{u}$.
In these subcases, denote by $p_1^\wr$ and $p_k^\wr$ the words obtained from $p_1\in \Wl{0}$ and $p_k\in \Wr{0}$, respectively, by replacing $(a\wedge y)$ by $(c\wedge y)$.
By definition, we have $\wp(p_1)=\wp(p_1^\wr)$ in the first subcase, and since 
$\widehat{(a\wedge a')}\irrr \widehat{(c\wedge c')}$, we have $\whwp(p_k)\irrr\whwp(p_k^\wr)$ in the second subcase.
These observations imply properties (Q2)--(Q4).
The same argument applies if $(c\wedge y)$ is replaced by $(a\wedge y)$.

Turning to the rest of the derivations steps, 
denote by $p$ the section of $w$ modified by the derivation step, and by $q$ the word $p$ is replaced by in order 
to obtain $w^\wr$.
(Using the notation of Lemma \ref{leirtaut}, $p=s$, $q=t$ or $p=t$, $q=s$.)
With each derivation step considered, $p$ and $q$ are coterminal paths in $\Arr\widetilde{\irc}^+$ such that 
$\widehat{p}=\widehat{q}$.
Let $u=\sbs_{\underline{w}}(p)$ be of the form \eqref{generalword}.
Then $p$ is a section of $p_i$ for some $i$ $(0\le i\le k)$, and $p$ is not a prefix of $p_0$ 
[suffix of $p_k$] if $u\in \Wl{}$\,[$\Wr{}$].
Define $u^\wr$ to be the bracketed word obtained from $u$ by replacing the section $p$ of 
$u$ by $q$.
Thus  $u^\wr$ is obtained from $u$ by replacing a path section of $p_i$ by $q$.
Properties (Q3)--(Q4) are now easier to check than in case (S22a).
\end{proof}

The respective propositions for derivation steps (T3a), (T3b), (T4a), (T4b) are more complicated to prove.
However, (T3a) and (T3b) are duals of  (T4a) and (T4b), respectively, therefore we can restrict ourselves to
proving the latter ones.

\begin{Prop}\label{stepTb}
Suppose that $w,w^\wr\in {\widetilde A}^+$ and we get $w^\wr$ from $w$ by a derivation step of type
{\rm (T4b)}.
If $\underline{w}\in W$ such that $w=\underline{w}{\downarrow}$ then there exists 
$\underline{w^\wr}\in W$ such that $w^\wr=\underline{w^\wr}{\downarrow}$ and 
$\whwp(\underline{w})=\whwp(\underline{w^\wr})$.
\end{Prop}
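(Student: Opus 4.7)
The plan is to follow the scheme laid out just before Proposition~\ref{stepS1}: the derivation step replaces an occurrence of a $\wedge$-letter $(z\wedge x)$ in $w$ by $(z\wedge x)(y\wedge x)$ for some $x,y,z\in\Av$; set $u=\sbs_{\underline w}\bigl((z\wedge x)\bigr)$, display $u$ in the form \eqref{generalword}, construct a bracketed word $u^\wr\in\Wt $, and verify conditions (Q1)--(Q4). The definition of $u^\wr$ splits along two axes: where the old $\wedge$-letter $(z\wedge x)$ sits in the bracketed structure of $\underline w$ (which by Corollary~\ref{e-r-l disjoint} and the structure of $W\cup\Wr{}\cup\Wl{}$ is either inside some $p_i$ or at a fixed boundary position), and whether the newly inserted $\wedge$-letter $(y\wedge x)$ is a $\wedge$-loop.

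\emph{Case I: $(z\wedge x)$ is a $\wedge$-loop.} Then $(z\wedge x)$ is an $\At$-letter inside some $p_i$. If $(y\wedge x)$ is also a $\wedge$-loop, I let $u^\wr$ be obtained from $u$ by inserting $(y\wedge x)$ in $p_i$ immediately after $(z\wedge x)$; the resulting $p_i^\wr$ is still a path in $\von\irc$, and the equality $\whwp(p_i^\wr)=\whwp(p_i)$ follows from the identity $\widehat{(z\wedge x)}\cdot\widehat{(y\wedge x)}=\widehat{(z\wedge x)}$ holding in the completely simple local semigroup $\widehat\irc(\al{(z\wedge x)},\al{(z\wedge x)})$, which is an instance of the identity underlying rule (R4). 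If $(y\wedge x)$ is not a $\wedge$-loop, then $(y\wedge x)\in\Wr{0}\subseteq\Wr{}$ and I insert $\bal{(y\wedge x)}$ at the position of $(z\wedge x)$: either prepended to $B_{i+1}$ (when $(z\wedge x)$ is the last $\At$-letter of $p_i$ and $i<k$), or as a new $BC$-pair splitting $p_i$ into two non-empty paths (when $(z\wedge x)$ is interior to $p_i$), or as a fresh $B_{k+1}C_{k+1}$-pair after $p_k$ (when $i=k$ and $(z\wedge x)$ ends $p_k$). Condition \ref{E1bi} is trivial because the new $w_{\mathrm{new}}=(y\wedge x)$ is a single $\At$-letter, and \ref{E1bii} reduces to $\widehat x\irll\whwp(\text{prefix})$, which holds since the prefix ends with $\widehat{(z\wedge x)}$ and $\widehat{(z\wedge x)}\irll\widehat x$ in the local completely simple semigroup.

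\emph{Case II: $(z\wedge x)$ is not a $\wedge$-loop.} Then by the structure of $W\cup\Wr{}\cup\Wl{}$, $(z\wedge x)$ is either the last $\At$-letter of some factor $\bal{w_j}$ inside a block $B_i$, or the first $\At$-letter of some factor $\jobb{w_j}$ inside a block $C_i$. Since $W$ is self-dual, I treat the first sub-situation only. Lemma~\ref{deformedlemma-bracketed}(\ref{laii}) supplies the key $\irl$- and $\irr$-relations at the seam, in particular $\widehat{(z\wedge x)}\irll\widehat x$, $\whwp(w_j)\in E$, and the equalities of the relevant objects. If $(y\wedge x)$ is also not a $\wedge$-loop, I expand the terminating $p^{(j)}(z\wedge x)\in\Wr{0}$ of $w_j$ to $p^{(j)}\bal{(z\wedge x)}(y\wedge x)$, which introduces one extra innermost $B$-block and promotes $w_j$ from $\Wr{n}$ to $\Wr{n+1}$; the new instance of \ref{E1bii} matches the old one since the terminal $\wedge$-letters share the second coordinate $x$. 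If $(y\wedge x)$ is a $\wedge$-loop, it cannot stay inside $\bal{w_j}$, so I pull it out of $\bal{w_j}$ and prepend it to the path $p_i$ following $B_iC_i$ in $\underline w$ (splitting $C_i$ into two $\jobb{\cdot}$-sequences and introducing an auxiliary $BC$-pair if $C_i\neq\veps$ or $\alpha((y\wedge x))\neq\omega(\wp(p_{i-1}))$), and Lemma~\ref{deformedlemma-bracketed}(\ref{laii}) ensures that $(y\wedge x)$ really is a loop at the correct object.

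The main obstacle I anticipate is the bookkeeping in Case II when $(y\wedge x)$ is a $\wedge$-loop: the inserted $\wedge$-letter lands at the seam between a $B_iC_i$-block and the following path $p_i$, so one has to relocate it across the bracket structure while simultaneously checking the new instances of \ref{E1bi}--\ref{E1bii} and \ref{E2bi}--\ref{E2bii} on both sides of the seam and verifying the $\whwp$-equality prescribed by (Q3). The decisive technical input throughout is that every local semigroup $\widehat\irc(\alpha,\alpha)$ is completely simple, so the (R4)-type identity holds for stable arrows whenever both factors are loops at $\alpha$; combined with Lemma~\ref{deformedlemma-bracketed}(\ref{laii}), this pins down the relevant $\irr$- and $\irl$-classes at every boundary, and (Q1)--(Q4) then follow routinely.
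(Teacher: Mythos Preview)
Your Case~I is essentially the paper's argument, but Case~II has genuine gaps.

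First, the appeal to self-duality to dismiss the sub-situation where $(z\wedge x)$ is the first $\At$-letter of some $\jobb{w_j}$ does not work: reversing a (T4b) step yields a (T3b) step, not another (T4b) step, so the two sub-situations of your Case~II are not dual to one another. (In fact the paper handles the ``first letter of $\jobb{u}$'' sub-situation together with the $\wedge$-loop case, exactly as in your Case~I, since the inserted letter goes to the right of $(z\wedge x)$ inside $p_0$.)

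Second, and more seriously, your construction in the remaining sub-situation --- $(z\wedge x)=w_j\Tau$ with $w_j\in\Wr{}$ and the new letter $(y\wedge x)$ not a $\wedge$-loop --- does not produce a word in $\Wr{}$. Writing the terminal segment of $w_j$ as $p^{(j)}(z\wedge x)\in\Wr0$ and replacing it by $p^{(j)}\bal{(z\wedge x)}(y\wedge x)$ fails condition \ref{E0a}: one needs $\omega(p^{(j)})=\alpha(y)$, whereas all you know is $\omega(p^{(j)})=\alpha(z)$, and nothing relates $\alpha(y)$ to $\alpha(z)$. Even ignoring this, at the parent level condition \ref{E1bi} for the modified factor $\bal{w_j^\wr}$ would require $\widehat y\irrr\whwp(w_j^\wr)$, which again has no reason to hold. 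Your treatment of the companion sub-case (new letter a $\wedge$-loop) is also misplaced: the inserted $(y\wedge x)$ must sit immediately after $\bal{w_j}$ in the $\downarrow$-image, so it cannot be ``prepended to the path $p_i$ following $B_iC_i$'' unless $\bal{w_j}$ happens to be the last factor of $B_i$ and $C_i=\veps$.

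The paper's remedy is to \emph{go up one level}: set $u_+=\sbs_{\underline w}(\bal{w_j})$, locate $\bal{w_j}$ inside some $B_i$ of $u_+$, and insert either $\bal{(y\wedge x)}$ (if $(y\wedge x)$ is not a $\wedge$-loop) or the bare path letter $(y\wedge x)$ (if it is) \emph{immediately after} $\bal{w_j}$ in $B_i$. This keeps $w_j$ untouched, so (Q2) and \ref{E1bi} for $\bal{w_j}$ are automatic, and the only new check is \ref{E1bii} for the inserted factor, which follows from the old \ref{E1bii} for $\bal{w_j}$ because both terminal $\wedge$-letters share the second coordinate $x$. This single idea --- modify the parent $u_+$ rather than $w_j$ itself --- is what is missing from your Case~II.
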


\begin{proof}
Assume that an occurrence of a $\wedge$-letter $(y\wedge x)$ in $w$ is replaced by the word $(y\wedge x)(z\wedge x)$ 
where $x,y,z\in\overline{A}$.
Put $u=\sbs_{\underline{w}}((y\wedge x))$ and $v=\sbbr_{\underline{w}}(u)$.
If $(y\wedge x)$ is not a $\wedge$-loop then we have either $v=\jobb{u}\ (u\in \Wl{})$, or $v=\bal{u}\ (u\in \Wr{})$.

First we suppose that $(y\wedge x)$ is a $\wedge$-loop, or
$(y\wedge x)$ is not a $\wedge$-loop, and $v=\jobb{u}$, $u\in \Wl{}$, $\Iota u=(y\wedge x)$.
If $u$ is of form (\ref{generalword}) then in these cases, $(y\wedge x)$ is in $p_i$ for some $i\ (1\le i\le k$), and
if $(y\wedge x)$ is not a $\wedge$-loop then necessarily $i=0$ and $(y\wedge x)=\Iota p_0$.
Thus we have $p_i=p_{i1}(y\wedge x)p_{i2}$ for some $i$ where $p_{i1}$ and $p_{i2}$ are (possibly empty) paths,
$p_{01}$ being necessarily empty if $(y\wedge x)$ is not a $\wedge$-loop.
Define $u^\wr$ to be the bracketed word 
obtained from $u$ by replacing the $\wedge$-letter $(y\wedge x)$ by the bracketed word
$(y\wedge x)(z\wedge x)$ or $(y\wedge x)\bal{(z\wedge x)}$
according to whether $(z\wedge x)$ is a $\wedge$-loop or not.

If $\alpha(z)=\omega(x)$ then $u^\wr$ is obtained from $u$ such that $p_i$ is replaced by 
$p_i^\wr=p_{i1}(y\wedge x)(z\wedge x)p_{i2}$, and section $p_i^\wr$ of $u^\wr$ 
belongs to $W_0$ if $p_i\in W_0$, and belongs to $\Wl{0}$ if $i=0$ and $p_0\in \Wl{0}$.
Moreover, $\widehat{(z\wedge x)}$ is an idempotent $\irl$-related to $\widehat{x}$, therefore
$\whwp(p_i)=\whwp(p^\wr_i)$.
Similarly to the end of the proof of Proposition \ref{stepS1}, this equality implies properties (Q3)--(Q4).
If $(z\wedge x)$ is not a $\wedge$-loop then we have
\begin{equation}\label{T4i}
u^\wr=p_0\cdots p_{i-1}B_iC_ip_{i1}(y\wedge x)\bal{(z\wedge x)}p_{i2}B_{i+1}C_{i+1}p_{i+1}\cdots p_k.
\end{equation}
To verify (Q4), it suffices to show that the factor $\bal{(z\wedge x)}$ and those of $B_{i+1}$ satisfy condition \ref{E1bii}. 
The former holds since $\whwp(p_{i1}(y\wedge x))\irll \widehat{x}$.
To see the latter, we recall the respective relation between $p_i$ and $B_{i+1}$ in $u$ and the facts that if 
$p_{i2}\neq \veps$ then $\whwp(p_i)\irll \widehat{p_{i2}}$, and if
$p_{i2}= \veps$ then $\whwp(p_i)=\whwp(p_{i-1}(y\wedge x))\irll \widehat{x}$.

Now suppose that $v=\bal{u}\ (u\in \Wr{})$, and so $u\Tau=(y\wedge x)$.
Consider the section $u_+=\sbs_{\underline{w}}(v)$
of $\underline{w}$, and suppose that it is of the form \eqref{generalword}.
Then $v$ is a factor of $B_i$ for some $i$ $(1\le i\le k)$, therefore
$B_i$ is of the form $\bal{u_{-m}}\cdots\bal{u_{-1}}\bal{u}\bal{u_1}\cdots\bal{u_n}\ (m,n\in\mathbb{N}_0)$
for some bracketed words $u_j\in \Wr{}\ (-m\le j\le n,\ j\neq 0)$.
Define $u_+^\wr$ to be the bracketed word obtained from $u_+$ by replacing $\bal{u}$ by 
$\bal{u}(z\wedge x)$ if $(z\wedge x)$ is a $\wedge$-loop, and by $\bal{u}\bal{(z\wedge x)}$ otherwise.
Thus
\begin{equation*}
u_+^\wr=p_0\cdots p_{i-1}\bal{u_{-m}}\cdots\bal{u_{-1}}\bal{u}(z\wedge x)\bal{u_1}\cdots\bal{u_n}C_ip_i\cdots p_k
\end{equation*}
and
\begin{equation*}
u_+^\wr=p_0\cdots p_{i-1}\bal{u_{-m}}\cdots\bal{u_{-1}}\bal{u}\bal{(z\wedge x)}\bal{u_1}\cdots\bal{u_n}C_ip_i\cdots p_k,
\end{equation*}
respectively, in the two subcases.
In the second subcase, (Q3) is clear. 
Since $u\Tau=(y\wedge x)$ implies by property \ref{E1bii} of $u_+$ that $\widehat{x}\irll \whwp(p_{i-1})$, 
we immediately obtain that the new factor $\bal{(z\wedge x)}$ satisfies condition \ref{E1bii},
and so (Q4) also follows.
In the first subcase, where $(z\wedge x)$ is a new path factor, 
the relation $\widehat{x}\irll \whwp(p_{i-1})$, seen above, implies $\widehat{p_{i-1}}\widehat{(z\wedge x)}=\widehat{p_{i-1}}$ and
$\omega(p_{i-1})=\alpha(z)=\omega(x)$, since $\widehat{(z\wedge x)}$ is idempotent. This verifies (Q3). 
Moreover, we obtain that the factors $\bal{u_j}\ (1\le j\le n)$
satisfy condition \ref{E1bii} whence (Q4) follows.
\end{proof}

\begin{Prop}\label{stepTa}
Suppose that $w,w^\wr\in {\widetilde A}^+$ and we get $w^\wr$ from $w$ by a derivation step of type
{\rm (T4a)}.
If $\underline{w}\in W$ such that $w=\underline{w}{\downarrow}$ then there exists 
$\underline{w^\wr}\in W$ such that $w^\wr=\underline{w^\wr}{\downarrow}$ and 
$\whwp(\underline{w})=\whwp(\underline{w^\wr})$.
\end{Prop}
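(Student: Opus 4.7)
The plan is to mirror the case analysis of Proposition~\ref{stepTb} in the reverse direction: a section $(y\wedge x)(z\wedge x)$ of $w$ is being collapsed to $(y\wedge x)$, and I must locate how this pair is presented inside $\underline{w}\in W$ and excise the bracketed encoding of $(z\wedge x)$ to produce $\underline{w^\wr}\in W$.

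Since no $\At$-letter can lie between the two letters in $w=\underline{w}{\downarrow}$, only brackets may separate them in $\underline{w}$; inspection of the shapes allowed by \ref{E0}--\ref{E2} yields three configurations, each the reverse of one produced in Proposition~\ref{stepTb}. In the \emph{internal} configuration, both letters are $\wedge$-loops lying consecutively inside a common path $p_i$ at some nesting level; setting $u=\sbs_{\underline{w}}((z\wedge x))$ and $p_i=p_{i1}(y\wedge x)(z\wedge x)p_{i2}$, I let $u^\wr$ be obtained by deleting $(z\wedge x)$ from $p_i$. In the \emph{solo-bracket} configuration, $(z\wedge x)$ is a non-loop appearing as a solo factor $\bal{(z\wedge x)}$ of some $B_j$ of $u_+=\sbs_{\underline{w}}(\bal{(z\wedge x)})$, with $(y\wedge x)$ either the last $\At$-letter of $p_{j-1}$ or the last $\At$-letter of a preceding bracketed factor of $B_j$; then $u_+^\wr$ is obtained by erasing $\bal{(z\wedge x)}$, merging $p_{j-1}$ and $p_j$ into a single path whenever this deletion empties $B_jC_j$ so as to restore~\ref{E0b}. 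In the \emph{singleton-path} configuration, $(z\wedge x)$ is a $\wedge$-loop forming a singleton path $p'=(z\wedge x)$ sandwiched between two $B$-blocks of $u_+$, with $(y\wedge x)$ equal to $u'\Tau$ for the last factor $\bal{u'}$ of the preceding $B$-block; here I delete $p'$ and merge the two $B$-blocks. The symmetric configurations on the $C$-side are treated dually.

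The common algebraic ingredient, already used inside the proof of Proposition~\ref{stepTb}, is that $\widehat{(y\wedge x)}$ and $\widehat{(z\wedge x)}$ are idempotents both $\irl$-related to $\widehat{x}$ in the completely simple semigroup $\widehat{\irc}(\al z,\al z)$, so $\widehat{(y\wedge x)}\cdot\widehat{(z\wedge x)}=\widehat{(y\wedge x)}$. In the internal configuration this gives $\whwp(p_i^\wr)=\whwp(p_i)$, whence (Q3). In the other two configurations the remaining paths of $u_+^\wr$ are those of $u_+$ (up to concatenation at the merging point), so $\wp(u_+^\wr)$ either equals $\wp(u_+)$ or differs only by a re-association that does not affect its image under $\widehat{}$; in either case $\whwp(u_+^\wr)=\whwp(u_+)$. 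Properties (Q1), (Q2) and the routine parts of (Q4) will be inherited from $\underline{w}$.

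The main obstacle will be the bookkeeping in the solo-bracket and singleton-path configurations: after the deletion and the possible merge, I have to re-verify~\ref{E0b} and~\ref{E1bii} for the bracketed factors now neighbouring the deleted piece. This is done by invoking Lemma~\ref{deformedlemma-bracketed} to identify the relevant last $\At$-letters of those factors, together with the above absorption identity, which guarantees that all relevant $\widehat{x_k}$'s and the $\whwp$-value of the (merged) preceding path lie in the common $\irl$-class of $\widehat{x}$. Once $u^\wr$ (or $u_+^\wr$) is produced with (Q1)--(Q4) verified, $\underline{w^\wr}$ is assembled by substitution as in the paragraph preceding Proposition~\ref{stepS1}, giving $\underline{w^\wr}\in W$, $w^\wr=\underline{w^\wr}{\downarrow}$ and $\whwp(\underline{w})=\whwp(\underline{w^\wr})$.
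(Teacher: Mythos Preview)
Your case analysis is genuinely incomplete. You assert that ``inspection of the shapes allowed by \ref{E0}--\ref{E2} yields three configurations'', but this is false: you are only listing the configurations that a single application of (T4b) can produce, not all configurations in which the pair $(y\wedge x)(z\wedge x)$ can occur in an arbitrary $\underline{w}\in W$. Intervening derivation steps (other (T3b)/(T4b) steps, (S)-steps) can build arbitrarily deep nested structure around the occurrence of $(z\wedge x)$ before a (T4a) step is applied. Concretely, the following situations are not covered by your ``internal'', ``solo-bracket'' or ``singleton-path'' cases:
\begin{itemize}
\item $(z\wedge x)$ is a non-loop with $\sbs_{\underline{w}}((z\wedge x))=u_2\in\Wl{}$, $\Iota u_2=(z\wedge x)$, and $u_2\neq (z\wedge x)$; here $\jobb{u_2}$ is a proper $\Wt$-prefix of some $w_j\in\Wr{}$ inside a $B_i$-block. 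Deleting $(z\wedge x)$ forces you to dismantle the pair $\jobb{\,},\bal{\,}$ around it and splice the remaining suffix $u_{22}$ of $u_2$ back into the surrounding level (the paper's cases $u=u_1\neq u_2$, last subcase, and $u\neq u_1,u_2$ with both in $B_i$, $w_j\neq w_j\Tau$).
\item $(z\wedge x)$ is the outer-level letter (e.g.\ $(z\wedge x)=\Iota p_i$) while $(y\wedge x)$ is \emph{inside} a bracket as the last $\At$-letter of some $w_s$ in $B_i$ or $C_i$ (the paper's case $u=u_2\neq u_1$). When moreover $(z\wedge x)$ is a non-loop and sits in $\Wr{}$, the construction has to ascend \emph{two} levels to $\sbs_{\underline{w}}(\bal{u})$ and reshuffle there.
\item $(y\wedge x)$ and $(z\wedge x)$ lie in consecutive factors $\jobb{w_{j-1}},\jobb{w_j}$ of a $C_i$-block, where the fix is to merge $w_{j-1}$ with the suffix $w_{j2}$ of $w_j$ into a single $\jobb{\cdot}$ factor (the paper's last subcase).
\end{itemize}
None of these is a mere deletion; each requires verifying \ref{E1b}/\ref{E2b} for newly exposed or newly formed bracketed factors via Lemma~\ref{deformedlemma-bracketed}\ref{laii}.

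A second gap: your ``common algebraic ingredient'' $\widehat{(y\wedge x)}\cdot\widehat{(z\wedge x)}=\widehat{(y\wedge x)}$ presupposes that $(z\wedge x)$ is a $\wedge$-loop, since otherwise $(z\wedge x)\notin\Arr\irc$ and $\widehat{(z\wedge x)}$ is undefined. In the non-loop subcases the relevant absorption takes place at the level of $\whwp(p_{i-1})$ and $\whwp(u_{22})$ (or $\whwp(w_{j2})$), using property \ref{E1bii} and Lemma~\ref{deformedlemma-bracketed}\ref{laii} to establish the needed $\irl$-relations; this is precisely the work your outline skips.
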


\begin{proof}
Assume that an occurrence of a section $(y\wedge x)(z\wedge x)$ of $w$ is replaced by $(y\wedge x)$ where 
$x,y,z\in\overline{A}$.
Denote $\sbs_{\underline{w}}((y\wedge x))$, $\sbs_{\underline{w}}((z\wedge x))$ and
$\sbs_{\underline{w}}((y\wedge x)(z\wedge x))$ by $u_1$, $u_2$ and $u$, respectively.
Clearly, $u_1$ and $u_2$ are sections of $u$, and each can be equal to $u$ or can be a proper subsection of $u$.
We proceed by distinguishing the four cases obtained in this way.

Case $u=u_1=u_2$.\quad
If $u$ is of the form \eqref{generalword} then $(y\wedge x)(z\wedge x)$ is a section of $p_i$ for some $i\ (0\le i\le k)$.
This implies that $\omega(x)=\alpha(z)$, and so $(z\wedge x)$ is a $\wedge$-loop.
If $(y\wedge x)$ is not a $\wedge$-loop then \ref{E0a} implies $i=0$, $p_0\in \Wl{0}$ and $\Iota p_0=(y\wedge x)$.
Define $p_i^\wr$ and $u^\wr$ to be the bracketed words obtained from $p_i$ and $u$, respectively,
by deleting $(z\wedge x)$.
Property (Q3) follows from the fact that, if $(y\wedge x)$ is a $\wedge$-loop then $\widehat{(y\wedge x)}$ and $\widehat{(z\wedge x)}$,
if $(y\wedge x)$ is not a $\wedge$-loop then $\widehat{(x'\wedge x)}$ and $\widehat{(z\wedge x)}$
are $\mathcal{L}$-related idempotents, and so we have
$\widehat{(y\wedge x)}\widehat{(z\wedge x)}=\widehat{(y\wedge x)}$ and
$\widehat{(x'\wedge x)}\widehat{(z\wedge x)}=\widehat{(x'\wedge x)}$,
respectively.
To check (Q4), it suffices to observe that $\whwp{(p_i)}\irll \whwp(p_i^\wr)$ by the former equalities 
if $(y\wedge x)(z\wedge x)$ is a suffix of $p_i$, and by the equality $p_i\Tau=p_i^\wr\Tau$ otherwise.

Case $u=u_1\neq u_2$.\quad
Put $v_2=\sbbr_{\underline{w}}(u_2)$ where we have $v_2=\bal{u_2}$ and $u_2\in \Wr{}$, 
or $v_2=\jobb{u_2}$ and $u_2\in \Wl{}$.
Assume that $u$ is of the form \eqref{generalword}.
Then $(y\wedge x)=p_{i-1}\Tau$ and $(z\wedge x)$ is the first $\At$-letter of the bracketed word $B_iC_i$ for some $i\ (1\le i\le k)$, 
and if $(y\wedge x)$ is not a $\wedge$-loop then $i=1$ and $p_0=(y\wedge x)$.

If $B_i=\veps$ then $(z\wedge x)$ is contained in the first factor of $C_i$ of the form \ref{E2a}.
Therefore the first factor of $C_i$ is $v_2=\jobb{u_2}$ where $u_2\in \Wl{}$ and $\Iota u_2=(z\wedge x)$, and so 
$\alpha(z)\neq \omega(x)$.
Notice that $\omega(x)=\omega(p_{i-1})=\alpha(p_i)$ and $\widehat{z}\irrr \whwp{(p_i)}$
by properties \ref{E0a} and \ref{E2bii} of $u$,   
and the latter relation implies $\alpha(z)=\alpha(p_i)$.
Hence we obtain that $\alpha(z)=\omega(x)$, a contradiction.

If $B_i\neq\veps$ then $(z\wedge x)$ is the first $\At$-letter of the first factor $\bal{w_1}$ of $B_i$
of the form \ref{E1a} where $w_1\in \Wr{}$.
By the dual of Lemma \ref{deformedlemma-bracketed}\ref{lai} we see that either $\alpha(z)\neq \omega(x)$ and 
$u_2=w_1=(z\wedge x)$, or $(z\wedge x)$ is a $\wedge$-loop and $u_2=w_1$, or else
$\alpha(z)\neq \omega(x)$, $u_2\in \Wl{}$ with $\Iota u_2=(z\wedge x)$, and 
$v_2=\jobb{u_2}$ is a $\Wt $-prefix $w_1$. 
Now we consider these subcases separately.

If $\alpha(z)\neq \omega(x)$ and $u_2=w_1=(z\wedge x)$ then define $u^\wr$ to be the bracketed word obtained from $u$ by deleting
the factor $\bal{w_1}$ of $B_i$.
This obviously fulfils all the requirements.

Now consider the subcase where $(z\wedge x)$ is a $\wedge$-loop, i.e., $\al z=\om x$, and $u_2=w_1$.
If $w_1\Tau=(a\wedge b)$ then the dual of Lemma \ref{deformedlemma-bracketed}\ref{laii} implies $\al z=\al a$, and
\ref{E1bii} ensures $\widehat{b}\irll \whwp(p_{i-1})\irll \widehat{x}$
since $\wp(p_{i-1})\Tau$ is either $(y\wedge x)$ or $(x'\wedge x)$, depending on whether
$(y\wedge x)$ is a $\wedge$-loop or not.
This implies $\omega(b)=\omega(x)$ whence we obtain $\alpha(a)=\omega(b)$, a contradiction.

Finally, let $\alpha(z)\neq \omega(x)$, $u_2\in \Wl{}$ such that $\Iota u_2=(z\wedge x)$ and 
$v_2=\jobb{u_2}$ is a $\Wt $-prefix of $w_1$.
Therefore we have $u_2=(z\wedge x)u_{22}$ and $w_1=\jobb{u_2}w_{12}$ for some 
$\Wt $-suffix $u_{22}$ and $w_{12}$ of $u_2$ and $w_1$, respectively, whence
$u_{22}\in W\cup\Web$ and $w_{12}\in \Wr{}$ by Lemma \ref{cut}.
This allows us to define $u^\wr$ so that the section 
$(y\wedge x)\bal{w_1}=(y\wedge x)\bal{\jobb{(z\wedge x)u_{22}}w_{12}}$ of $u$, where $(y\wedge x)=p_{i-1}\Tau$,
is replaced by $(y\wedge x)u_{22}\bal{w_{12}}$.
Since $u_2=(z\wedge x)u_{22}\in \Wl{}$, it is easy to see by definition that
$(y\wedge x)u_{22}\in W$ or $\Wl{}$ depending on whether $(y\wedge x)$ is a $\wedge$-loop or not.
This implies that $u^\wr$ is of the form \eqref{generalword}.
Applying Lemma \ref{deformedlemma-bracketed}\ref{laii} for $u_2$, we obtain that if
$\wp(u_{22})\neq\veps$ then $\widehat{x}\irll \whwp(u_{22})\in E$.
Hence $\whwp((y\wedge x)u_{22})=\whwp({(y\wedge x)})\whwp(u_{22})=\whwp((y\wedge x))$ follows,
and this implies $\whwp(p_{i-1}u_{22})=\whwp(p_{i-1})$, and so (Q3) holds for $u^\wr$.
In order to check (Q4) for $u^\wr$, it suffices to verify that the factor $\bal{w_{12}}$ satisfies \ref{E1b}.
Since $w_{12}\Tau=w_1\Tau$ and $\wp(w_{12})=\wp(w_1)$, it is straightforward from property \ref{E1bi} of 
$\bal{w_1}$ in $u$ that the same property is valid for $\bal{w_{12}}$ in $u^\wr$.
Similarly, these equalities combined with $\whwp(p_{i-1}u_{22})=\whwp(p_{i-1})$
allow us to see that property \ref{E1bii} of $\bal{w_1}$ in $u$ implies the same property of $\bal{w_{12}}$ 
in $u^\wr$.

Case $u=u_2\neq u_1$.\quad 
Assume that $u$ is of the form \eqref{generalword}.  
Then $(z\wedge x)=\Iota p_i$ and $(y\wedge x)$ is the last $\At$-letter in the bracketed word $B_iC_i$ for some 
$i$ $(1\leq i \leq k)$. 
Furthermore, if $(z\wedge x)$ is not a $\wedge$-loop then $i=k$, $(z\wedge x)=p_k$,
and by Corollary \ref{e-r-l disjoint}, $u\in\Wr{}$.

First we examine the subcase, where $(z\wedge x)$ is a $\wedge$-loop.
If $C_i\neq \veps$ then $s\in \mathbb N$ in \ref{E2a}, and $(y\wedge x)$ is the last $\At$-letter of $w_s$.
Assume that $\Iota w_s=(a\wedge b)$ where $\alpha(a)\neq\omega(b)$. 
Property \ref{E2bii} of $w_s$ implies that $\widehat{a}\irrr\whwp(p_i)\irrr \widehat{(z\wedge x)}\irrr \widehat{z}$ 
whence $\alpha(a)=\alpha(z)$ follows.
If $w_s=\Iota w_s$ then $b=x$ and $\omega(b)=\omega(x)$ are obvious.
If $w_s\neq \Iota w_s$ then we see by Lemma \ref{deformedlemma-bracketed}\ref{laii} that
$\omega(b)=\omega(x)$.
Combining these equalities we obtain $\alpha(z)=\alpha(a)\neq \omega(b)=\omega(x)$, a contradiction.

Let us assume now that $C_i= \veps$, and so $(y\wedge x)$ is the last $\At$-letter of $B_i$.
In the form \ref{E1a} of $B_i$, we have $s\in \mathbb N$ and $(y\wedge x)=w_s\Tau$.
Since $(z\wedge x)=\Iota p_i$, we have $p_i=(z\wedge x)p_{i2}$ where $p_{i2}\in W_0^\veps$ or
$i=k$ and $p_{i2}\in \Wr{0}$.
Define
\[ u^\wr=p_0 \cdots p_{i-1}B_ip_{i2}B_{i+1}C_{i+1}p_{i+1}\cdots p_k. \]
Since $(z\wedge x)$ is a $\wedge$-loop and we have $\whwp(p_{i-1}) \irll \widehat{x}$ 
in $u$ by property \ref{E1bii} of $w_s$, we get $\whwp(p_{i-1})\widehat{(z\wedge x)}=\whwp(p_{i-1})$.
Hence 
$\whwp(p_{i-1}p_i)=\whwp(p_{i-1}(z\wedge x)p_{i2})=\whwp(p_{i-1})\widehat{(z\wedge x)}\whwp(p_{i2})=
\whwp(p_{i-1})\whwp(p_{i2})=\whwp(p_{i-1}p_{i2})$ also if $p_{i2}\neq\veps$, and (Q3) follows.
If $p_{i2}\neq\veps$ then the relation $\widehat{p_i}\irll \widehat{p_{i2}}$ implies that the factors of
$B_{i+1}$ fulfil condition \ref{E1bii} in $u^\wr$ since they do in $u$.
If $p_{i2}=\veps$ then the same follows by observing that $\widehat{p_i}=\widehat{(z\wedge x)}\irll \widehat{x}$ in $u$, 
and so $\widehat{p_i}\irll \widehat{p_{i-1}}$.

Secondly, consider the subcase where $(z\wedge x)$ is not a $\wedge$-loop. 
As we have seen above, $u$ is necessarily in $\Wr{}$, and $p_k=(z\wedge x)$ in its form \eqref{generalword}.
If $C_k=\veps$ then $(y\wedge x)$ is the last $\At$-letter in $B_k$,
and so in its form \ref{E1a} we have $s\in \mathbb N$ and $(y\wedge x)=w_s\Tau$. 
By \ref{E0a} it follows that $\omega(p_{k-1})=\alpha(p_k)=\alpha(z)$.
Also, by applying \ref{E1bii} for $w_s$, we obtain that $\widehat{x}\irll\whwp(p_{k-1})=\widehat{p_{k-1}}$, 
which implies $\omega(p_{k-1})=\omega(x)$. 
Hence we conclude $\alpha(z)=\omega(x)$, which contradicts the assumption that 
$(z\wedge x)$ is not a $\wedge$-loop.

If $C_k\neq\veps$ then $(y\wedge x)$ is the last $\At$-letter in $C_k$ where it is of the form \ref{E2a} with 
$s\in \mathbb N$.
Assume that $\Iota w_s=(a\wedge b)$ where, by definition, $\alpha(a)\neq \omega(b)$.
By property \ref{E2bii} of $w_s$ in $u$ we see that $\widehat{a}\irrr \whwp(p_k) \irrr \widehat{z}$.
If $\Iota w_s=w_s$ then we have $w_s=(a\wedge b)=(y\wedge x)$, and so 
$\widehat{y}=\widehat{a}\irrr \widehat{z}$ follows. 
In this case, let us define
$u^\wr=p_0B_1C_1p_1\cdots p_{k-1}B_k\jobb{w_1}\cdots\jobb{w_{s-1}}(y\wedge x).$
Obviously, the relation $\widehat{y}\irrr \widehat{z}$ implies properties (Q2) and (Q4), the rest being even more
straightforward.

Now consider the subcase $\Iota w_s\neq w_s$.
Then $w_s=(a\wedge b)w_{s2}$ such that $w_{s2}$ is the $\Wt $-suffix of $w_s\in \Wl{}$ obtained 
by deleting $\Iota w_s=(a\wedge b)$, and so 
the last $\At$-letter of $w_{s2}$ is $(y\wedge x)$ 
and 
$w_{s2}\in W\cup \Web$ by Lemma \ref{cut}.
Recall the relation  $\widehat{a}\irrr \widehat{z}$ from the previous paragraph, and notice that
$\widehat{b}\irll \widehat{x}$ follows by applying Lemma \ref{deformedlemma-bracketed}\ref{laii} for $w_s$ in $u$.
Consider the section 
$v=\sbs_{\overline{w}}(\bal{u})$
of $\overline{w}$, and let its form \eqref{generalword} be
\[v=\ulp_0\ulB_1\ulC_1\ulp_1\ulB_2\ulC_2\ulp_2\cdots \ulp_{l-1}\ulB_l\ulC_l\ulp_l\quad (l\in\mathbb{N}).\]
Then $\bal{u}$ is a factor of $\ulB_i$ for some $i\ (1\leq i\leq l)$, 
more precisely, we have 
\[\ulB_i=\bal{\ulw_1}\cdots\bal{\ulw_{j-1}}\bal{u}\bal{\ulw_{j+1}}\cdots\bal{\ulw_t}\quad (t\in\mathbb{N}),\]
where $\ulw_m\in\Wr{}\ (1\le m\le t,\ m\neq j)$.
For brevity, put
\[\ulB_{i1}=\bal{\ulw_1}\cdots\bal{\ulw_{j-1}}\quad\hbox{and}\quad
\ulB_{i2}=\bal{\ulw_{j+1}}\cdots\bal{\ulw_t},\]
and so we have $\ulB_i=\ulB_{i1}\bal{u}\ulB_{i2}$.
Define
\[ u_0^\wr=p_0B_1C_1p_1\cdots p_{k-1}B_k\jobb{w_1}\cdots\jobb{w_{s-1}}(a\wedge b) \]
and
\[ v^\wr=\ulp_1\ulB_1\ulC_1\ulp_1\cdots\ulp_{i-1}\ulB_{i1}\bal{u_0^\wr}w_{s2}\ulB_{i2}\ulC_i\ulp_i\cdots\ulB_l\ulC_l\ulp_l.\]
Notice that $u_0^\wr\in\Wr{}$ which directly follows from the facts that $u\in\Wr{}$ and
$\widehat{a}\irrr \widehat{z},\ \widehat{b}\irll \widehat{x}$.
Since $w_{s2}\in W\cup \Web$ the bracketed word
$v^\wr$ is of the form \eqref{generalword}, and conditions (Q1) and (Q2) are clearly satisfied by $v$ and $v^\wr$.
If $\wp(w_{s2})=\veps$ then (Q3) is also obvious. 
If $\wp(w_{s2})\neq \veps$ then, applying Lemma \ref{deformedlemma-bracketed}\ref{laii} for $w_s$ in $u$, we see that
$\widehat{x}\irll \widehat{b}\irll \wp(w_{s2})\in E$, and,
by using \ref{E1bii} for the factor $\bal{u}$ of $\ulB_i$, 
we obtain that $\whwp(\ulp_{i-1})\irll \widehat{x}$.
Hence we conclude that $\whwp(\ulp_{i-1})\whwp(w_{s2})=\whwp(\ulp_{i-1})$, 
and (Q3) holds also if $\wp(w_{s2})\neq \veps$.
Moreover, these observations combined with the respective properties of $v$ imply most items of property (Q4).
It remains to observe that if $w_{s2}$ has a non-empty $\Wt $-prefix of the form 
$\mathring{B}_1=\bal{\mathring{w}_1}\bal{\mathring{w}_2}\cdots \bal{\mathring{w}_n}$ where 
$\mathring{w}_m\in\Wr{}$ and $\mathring{w}_m\Tau=(y_m\wedge x_m)$ $(1\le m\le n)$, in particular, if $\wp(w_{s2})=\veps$,
then $\widehat{x_m}\irll \whwp(\ulp_{i-1})$.
For, $\widehat{x_m}\irll \widehat{b}$ follows from the property \ref{E2bi} of $w_s$ in $u$.

Case $u\neq u_1,u_2$.
Observe that in this case $\sbbr(u_1)$ and  $\sbbr(u_2)$ are disjoint. 
Therefore, considering $u$ in the form \eqref{generalword}, each of
$u_1$ and $u_2$ is in a factor $\bal{w_j}$ of some $B_i$ (see \ref{E1a}) or in a factor $\jobb{w_j}$ of some $C_i$
(see \ref{E2a}).
First assume that $(y\wedge x)$ is the last $\At$-letter of $B_i$ and $(z\wedge x)$ is the first $\At$-letter of $C_i$
for some $i\ (1\le i\le k)$, and so
$(y\wedge x)=w_s\Tau$ where $\bal{w_s}$ is the last factor of $B_i$ of the form \ref{E1a}, and 
$(z\wedge x)=\Iota\ulw_1$ where $\jobb{\ulw_1}$ is the first factor of $C_i$ of the form \ref{E2a}.
This implies that $\alpha(y), \alpha(z)\neq \omega(x)$.
By \ref{E0a}, we have $\omega(p_{i-1})=\alpha(p_i)$, and by \ref{E1bii}, we have
$\whwp(p_{i-1})\irll \widehat{x}$, whence $\omega(p_{i-1})=\omega(x)$ follows. 
Similarly, $\widehat{z}\irrr \whwp(p_i)$ by \ref{E2bii}, and so $\alpha(z)=\alpha(p_i)$. 
Hence we obtain $\alpha(z)=\omega(x)$, a contradiction.

Now assume that both $u_1$ and $u_2$ are in $B_i$ for some $i\ (1\leq i\leq k)$.
Then, considering $B_i$ in the form \ref{E1a}, there exists $j\ (1<j\le s)$ such that 
$(y\wedge x)$ is the last $\At$-letter of $w_{j-1}$ and $(z\wedge x)$ is the first $\At$-letter of $w_j$.
Thus $(y\wedge x)=w_{j-1}\Tau$ and $\alpha(y)\neq \omega(x)$ follow, 
and we have either $w_j=(z\wedge x)$ and $\alpha(z)\neq \omega(x)$, or $w_j\neq w_j\Tau$. 
If $w_j=(z\wedge x)$ then define $u^\wr$ to be the bracketed word obtained from $u$ by replacing $B_i$ by
$B^\wr_i$ where $B^\wr_i$ is obtained from $B_i$ by deleting the factor $\bal{w_j}$.
It is straightforward that (Q1)--(Q4) hold.

Now we turn to the subcase $w_j\neq w_j\Tau$.
Then $(z\wedge x)$ is the first $\At$-letter of $w_j$, and $w_j\Tau=(a\wedge b)$
with $\alpha(a)\neq \omega(b)$.
We obtain by the dual of Lemma \ref{deformedlemma-bracketed}\ref{laii} that
$\alpha(z)=\alpha(a)$, and we see by  applying \ref{E1bii} for $w_{j-1}$ and $w_j$ that 
$\widehat{x} \irll \whwp(p_{i-1})\irll \widehat{b}$, and so $\omega(x)=\omega(b)$. 
This implies $\alpha(z)\neq \omega(x)$, and we deduce by Lemma \ref{deformedlemma-bracketed}\ref{lai}
that $u_2\in\Wl{}$, $\Iota u_2=(z\wedge x)$, and $\jobb{u_2}$ is a prefix of $w_j$.
Hence $w_j=\jobb{u_2}w_{j2}$ and $u_2=(z\wedge x)u_{22}$ where
$w_{j2}$ and $u_{22}$, if $u_{22}\neq\veps$, are $\Wt $-suffixes of $w_j$ and $u_2$, respectively,
and we have $w_{j2}\Tau=w_j\Tau$ and $\wp(w_{j2})=\wp(w_j)$.
Since $w_{j2}$ is of type (a) or (b), we see by Lemma \ref{cut} that $w_{j2}\in \Wr{}$ and 
$u_{22}\in W\cup \Web$.
Let us define $u^\wr$ to be the bracketed word obtained from $u$ by replacing $B_i$ by
\[B^\wr_i=\bal{w_1}\cdots \bal{w_{j-1}}u_{22}\bal{w_{j2}}\bal{w_{j+1}}\cdots \bal{w_s}.\]
Since $u_{22}\in W\cup \Web$, $u^\wr$ is of the form \eqref{generalword} and (Q2) holds.
If $\wp(u_{22})=\veps$ then (Q3) is obvious.
In the opposite case, we apply Lemma \ref{deformedlemma-bracketed}\ref{laii} for $u_2$ to see that
$\widehat{x}\irll \whwp(u_{22})\in E$.
By property \ref{E1bii} of
the factor $\bal{w_{j-1}}$ of $B_i$ we have $\widehat{x}\irll \whwp(p_{i-1})$, and so $\whwp(p_{i-1})\whwp(u_{22})=\whwp(p_{i-1})$. 
Hence (Q3) follows also in case $\wp(u_{22})\neq\veps$. 
In order to check (Q4),
assume that $u_{22}$, if non-empty, is of the form
$u_{22}=\bal{\mathring{w}_1}\bal{\mathring{w}_2}\cdots \bal{\mathring{w}_n}u_{22}^{\mathrm{ab}}$ where
$n\in\mathbb{N}_0$, $\mathring{w}_m\in\Wr{}$ $(1\le m\le n)$, and
$u_{22}^{\mathrm{ab}}$ is empty or is the longest $\Wt $-suffix of $u_{22}$ of type (a) or (b).
By Lemma \ref{cut}, either $u_{22}^{\mathrm{ab}}=\wp(u_{22}^{\mathrm{ab}})=\veps$, or 
$\wp(u_{22})=\wp(u_{22}^{\mathrm{ab}})\neq\veps$.
Therefore it suffices to show
that \ref{E1bii} is satisfied by the following factors of $B_i^\wr$:
$\bal{\mathring{w}_m}$ $(1\le m\le n)$, provided $n\neq 0$, and 
$\bal{w_{j2}},\bal{w_{j+1}},\ldots, \bal{w_s}$, provided $\wp(u_{22})\neq\veps$.
If $\mathring{w}_m\Tau=(y_m\wedge x_m)$ $(1\le m\le n)$ 
then property \ref{E1bii} of the former factors in $u_2$ implies 
$\widehat{x_m}\irll \whwp((z\wedge x))\irll \widehat{x}$.
Similarly, the same property of the factors of $B_i$ in $u$ ensures that 
$\widehat{x}\irll \widehat{b}\irll \widehat{b_r}$ if $w_r\Tau=(a_r\wedge b_r)$ $(j<r\le s)$,
since $w_{j-1}\Tau=(y\wedge x)$ and $w_j\Tau=(a\wedge b)=w_{j2}\Tau$.
This verifies property (Q4) because it is seen above that $\widehat{x}\irll \whwp(p_{i-1})$, and 
if $u_{22}\neq\veps$ then also $\widehat{x}\irll \whwp(u_{22})$.

Finally, assume that both $u_1$ and $u_2$ are in $C_i$ for some $1\leq i\leq k$. 
Then, considering $C_i$ in the form \ref{E2a}, there exists $j\ (1<j\le s)$ such that 
$(y\wedge x)$ is the last $\At$-letter of $w_{j-1}$ and $(z\wedge x)$ is the first $\At$-letter of $w_j$.
Hence we obtain that $(z\wedge x)=\Iota w_j$ with $\alpha(z)\neq \omega(x)$, and so
$w_j=(z\wedge x)w_{j2}$ where $w_{j2}$, if non-empty, is a $\Wt $-suffix of $w_j$. 
Define $u^\wr$ to be the bracketed word obtained from $u$ by replacing $C_i$ by
\[C^\wr_i=\jobb{w_1}\cdots \jobb{w_{j-2}}\jobb{w_{j-1}w_{j2}}\jobb{w_{j+1}}\cdots \bal{w_s}.\]
All we have to show is that $w_{j-1}w_{j2}\in \Wl{}$ and 
the factor $\jobb{w_{j-1}w_{j2}}$ of $C^\wr_i$ has property \ref{E2bi}.
For, \ref{E2bii} follows from the same property of $C_i$ due to the equality $\Iota(w_{j-1}w_{j2})=\Iota w_{j-1}$.
By the same argument applied in the previous paragraph for $u_2$ and $u_{22}$, we can deduce that
if $w_{j2}\neq\veps$ then $w_{j2}\in \Web$, and if $\wp(w_{j2})\neq\veps$ then $\widehat{x}\irll \whwp(w_{j2})\in E$
and $\omega(x)=\alpha(\wp(w_{j2}))$.
Put $\Iota w_{j-1}=(a\wedge b)$.
If $w_{j-1}=\Iota w_{j-1}$ then $a=y,\ b=x$, and
if $w_{j-1}\neq\Iota w_{j-1}$ then Lemma \ref{deformedlemma-bracketed}\ref{laii}
implies that
$\widehat{x}\irll \widehat{b}\irll \whwp(w_{j-1})\in E$ and $\omega(x)=\omega(b)=\omega(\wp(w_{j-1}))$.
Therefore, whether $w_{j-1}=\Iota w_{j-1}$ or not, $\omega(\wp(w_{j-1}))=\alpha(\wp(w_{j2}))$ follows 
if $\wp(w_{j2})\neq\veps$, 
and we can deduce that $w_{j-1}w_{j2}$ is of the form \eqref{generalword} where \ref{E0a} holds with 
$p_0\in \Wl{0}$, and so (Q3) is satisfied.
To verify property \ref{E1} of $w_{j-1}w_{j2}$, we again refer to the argument on $u_2$ and $u_{22}$ in the previous paragraph which shows in our present case that if
$w_{j2}=\bal{\mathring{w}_1}\bal{\mathring{w}_2}\cdots \bal{\mathring{w}_n}w_{j2}^{\mathrm{ab}}$ where
$n\in\mathbb{N}_0$, $\mathring{w}_m\in\Wr{}$ with $\mathring{w}_m\Tau=(y_m\wedge x_m)$ $(1\le m\le n)$, and
$w_{j2}^{\mathrm{ab}}$ is empty or is the longest $\Wt $-suffix of $w_{j2}$ of type (a) or (b),
then 
$\widehat{x_m}\irll \whwp((z\wedge x))\irll \widehat{x}$.
Combining this with the previous relations
$\widehat{x}\irll \whwp(w_{j2})\in E$ if $\wp(w_{j2})\neq\veps$ and
$\widehat{x}\irll \widehat{b}\irll \whwp(w_{j-1})\in E$,
we obtain that $\whwp(w_{j-1}w_{j2})=\whwp(w_{j-1})\whwp(w_{j2})=\whwp(w_{j-1})$, and so
\ref{E1} holds in $w_{j-1}w_{j2}$.
Since $w_{j-1}\in \Wl{}$ and $w_{j2}\in \Web$, \ref{E2} is clearly fulfilled in $w_{j-1}w_{j2}$, thus we have shown
that $w_{j-1}w_{j2}\in \Wl{}$.
\end{proof}

\section{Concluding remarks}

The main result of \cite{8} proves that, given a group variety $\iru$, if $S$ is an inverse semigroup and $\rho$ an idempotent separating congruence on $S$ such that the idempotent classes of $\rho$ belong to $\iru$ 
then the extension $\Sro$ is embeddable in a $\lambda$-semidirect product extension of a member of $\iru$ by $S/\rho$.

The question naturally arises whether Theorem \ref{main} can be strengthened so that the variety of all completely 
simple semigroups be replaced by any variety of completely simple semigroups.

\begin{Problem}
For which varieties $\irv$ of completely simple semigroups is it true that
if $S$ is an $E$-solid locally inverse semigroup and $\rho$ an inverse semigroup congruence on $S$
such that the idempotent classes of $\rho$ belong to $\irv$ 
then the extension $\Sro$ is embeddable in a $\lambda$-semidirect product extension of a
member of $\irv$ by $S/\rho$?
\end{Problem}

Note that in the special case where $\irv$ is the variety of rectangular bands, the answer is affirmative.
The approach applied in the proof of Theorem \ref{main} works, and the technical details are significantly simpler
(no $\wedge$ operation is needed, the invariant congruence corresponding to the variety of rectangular bands is easy
to handle).
Thus the following result yields.

\begin{Prop}
A regular semigroup is a generalized inverse semigroup if and only if it is embeddable in a $\lambda$-semidirect product
of a rectangular band by an inverse semigroup.
\end{Prop}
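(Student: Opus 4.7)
My plan is to establish both directions by specialising the framework of Theorem~\ref{main} to the variety $\mathcal{RB}$ of rectangular bands.

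For the ``if'' direction, suppose $S$ is a regular subsemigroup of $\lamsd BT$ with $B$ a rectangular band and $T$ an inverse semigroup. By Result~\ref{lsdtul}, $\lamsd BT$ is $E$-solid and locally inverse, with
$$E_{\lamsd BT}=\{(e,i):e\in B,\ i\in E_T,\ \act ie=e\}.$$
A direct computation using $E_B=B$, $ij=ji\in E_T$ and the action identity $\act{tu}a=\act t{\act ua}$ shows that for any two such idempotents the product $(e,i)(f,j)=(\act je\cdot\act if,ij)$ is again an idempotent of this form. Hence $\lamsd BT$ is orthodox in addition to being locally inverse, that is, a generalized inverse semigroup, and both properties pass to the regular subsemigroup $S$.

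For the ``only if'' direction, let $S$ be a generalized inverse semigroup and take $\rho$ to be the least inverse semigroup congruence on $S$. Since $S$ is orthodox and $E$-solid, the idempotent $\rho$-classes consist of idempotents and are completely simple by Yamada--Hall \cite{15}; hence they are rectangular bands. The plan is then to run the entire canonical construction of Section~4 with $\ircs$ replaced throughout by $\mathcal{RB}$. Because $\mathcal{RB}$ is a variety of semigroups in the usual sense, no $\wedge$ operation is involved: the free binary semigroup $\free2{\Av}$ gives way to the free semigroup $\Av^+$, and the bifree object of $\mathcal{RB}$ on $X$ is the factor of $\Xv^+$ by the congruence identifying two words whenever they share the same first letter and the same last letter. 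Accordingly, the congruence on $\Av^+$ playing the role of $\thetat$ is generated by $\Xi_1$, $\Xi_2$ and the simple first/last-letter relations.

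The embedding criterion then parallels Result~\ref{crit}: the canonical $\kappa$ is an embedding provided every $\mathcal{RB}$-derivation between two coterminal stable one-letter words $a,b$ in $\Av^+$ forces $a=b$. The key invariant is again the stabilised path $\widehat{\,\cdot\,}$, defined via verbatim analogs of Lemmas~\ref{tul}--\ref{komp}. Since no $\wedge$-letters ever appear, paths in $\von\irc$ are never broken during a derivation, so the entire Section~5 apparatus of bracketed words $W,\Wr{},\Wl{}$ and the companion lemmas about $\bal\cdot$ and $\jobb\cdot$ factors collapses. I expect the main obstacle to be the bookkeeping step of pinning down the correct generating set of the $\mathcal{RB}$-invariant congruence on the doubled alphabet $\Av$ and verifying invariance of $\widehat w$ under each of its defining relations; this is straightforward given the simplicity of $\mathcal{RB}$. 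What remains is a drastically shortened counterpart of Proposition~\ref{stepS1}, handling only the substitutions from $\Xi_1\cup\Xi_2$ together with the first/last-letter relations, each of which visibly preserves $\widehat w$; hence $\kappa$ is injective and yields the desired embedding of $\Sro$ into a $\lambda$-semidirect product extension of a rectangular band by $S/\rho$.
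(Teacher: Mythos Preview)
Your proposal is correct and follows exactly the route the paper indicates. The paper does not actually spell out a proof of this proposition; it merely remarks that the method of Theorem~\ref{main} goes through for the variety of rectangular bands with the simplifications you describe (no $\wedge$ operation, the bi-invariant congruence reduces to the first/last-letter relation). Your sketch of the ``if'' direction via the orthodoxy computation is in fact more explicit than anything the paper provides, and your outline of the ``only if'' direction matches the paper's one-sentence justification.
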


\end{document}